\date\today
\theoremstyle{definition}
\newtheorem{example}{Example}[section]
\theoremstyle{plain}
\newtheorem{theorem}{Theorem}
\newtheorem{fact}{Fact}
\newtheorem*{claim*}{Claim}
\newtheorem{lemma}[example]{Lemma}
\newtheorem{proposition}[example]{Proposition}
\newtheorem{corollary}[example]{Corollary}
\newtheorem{question}[example]{Question}
\newtheorem*{answer}{Answer}
\theoremstyle{remark}
\newtheorem{remark}[example]{Remark}
\theoremstyle{definition}
\newtheorem{definition}[example]{Definition}
\renewcommand\labelenumi{(\roman{enumi})}
\renewcommand\theenumi\labelenumi
\title{Infinite Hat Problems and Large Cardinals}
\author{Andreas Lietz, Jeroen Winkel}
\date{\today}
\begin{document}
\maketitle
\begin{abstract}
Picture countably many logicians all wearing a hat in one of $\kappa$-many colours. They each get to look at finitely many other hats and afterwards make finitely many guesses for their own hat's colour. For which $\kappa$ can the logicians guarantee that at least one of them guesses correctly? This will be the archetypical hat problem we analyse and solve here. We generalise this by varying the amount of logicians as well as the number of allowed guesses and describe exactly for which combinations the logicians have a winning strategy.

We also solve these hat problems under the additional restriction that their vision is restrained in terms of a partial order. Picture e.g.~countably many logicians standing on the real number line and each logician is only allowed to look at finitely many others in front of them.

In many cases, the least $\kappa$ for which the logicians start losing can be described by an instance of the free subset property which in turn is connected to large cardinals. In particular, $\mathrm{ZFC}$ can sometimes not decide whether or not the logicians can win for every possible set of colours.
\end{abstract}
\tableofcontents
\section{Introduction and Main Results}\label{sec:introduction-and-main-results}
In mathematical folklore there is a long tradition of \textit{hat problems}. Typically there are some logicians (or gnomes, for some odd reason) wearing a hat, and they are able to see at least some of the other logician's hats but not their own. Each player then guesses the colour of their own hat and they all win or lose collectively based on how good the guesses were. The consequences of losing for the logicians (or gnomes) are more or less drastic depending on who poses the problem. At first glance, it often seems impossible for the logicians to win consistently: how should the colour of the other hats help me guess my own? Nonetheless it is often possible to do so surprisingly well. We refer to the book \cite{HarTay13} for the history and many results on hat problems.

We will explore a specific type of hat problem where only a single logician has to guess correctly to win.
We start by defining the general format of the hat games and after that we will give some examples. Except for the last section, we will only assume relatively basic knowledge of ordinals and cardinals, but not more from the subjects of Mathematical Logic and Set Theory.
We hope that Section \ref{sec:hat-games-with-small-losing-threshold} will be accessible for the general mathematician, while Sections \ref{sec:hat-games-with-large-losing-threshold}, \ref{sec:proof-of-theorem-a} and \ref{sec:hat-games-on-partial-orders} will require some basic knowledge about ordinals and cardinals.

\begin{definition}
    Let $\lambda,\kappa\geq 1,\gamma\geq 2$ be cardinals. The $(\lambda,\kappa,\gamma)$-hat game is played as follows: there are $\lambda$ logicians, each of them wears a hat with an element from $\kappa$ written on it (which we will think of as one of $\kappa$-many colours).
    Each logician can look at a finite number of hats of other logicians, where the colours they see might influence at which hats and how many others they want to look at next. Then they each make a list of guesses, but the list has to have length strictly less than $\gamma$.
    The logicians win if there is at least one logician who listed the colour of their own hat. Of course, the logicians cannot see the colour of their own hat.\\
\begin{center}
\begin{tikzpicture}[scale=0.8]
\node (tuple) at (0, 0) {$(\lambda,\kappa, \gamma)$};
\node (lambda) at (-3, -2) {\# of logicians};
\node (kappa) at (0, 2) {\# of possible colours};
\node (gamma) at (3, -2) {strict bound on \# of guesses};

\draw[-] (kappa)--(tuple);
\draw[-] (lambda)--(tuple);
\draw[-] (gamma)--(tuple);
\end{tikzpicture}
\end{center}

    The logicians are allowed to have a strategy meeting before playing the game. We say that the $(\lambda,\kappa,\gamma)$-game is \textit{winning} for the logicians if there is a winning strategy for the logicians. Informally, a strategy is a set of instructions for each logician which, for a given valid sequence of hats, produces a list of ${<}\gamma$-many guesses in $\kappa$ depending on only finitely many other hats (and not their own hat!). It is winning if the logicians win against every possible allowed colouring of hats if every logician follows their set of instructions correctly. Formally, we equip $\kappa$ and $[\kappa]^{<\gamma}$ (the set of subsets of $\kappa$ of size $<\gamma$) with the discrete topology and say that a strategy is a sequence $\langle f_\alpha\mid\alpha<\lambda\rangle$ of continuous functions $f_\alpha\colon \kappa^{\lambda\setminus\{\alpha\}}\rightarrow[\kappa]^{<\gamma}$. If there is no winning strategy, the $(\lambda,\kappa,\gamma)$-game is \textit{losing}.
\end{definition}

We leave the exact logistics of realizing a $(\lambda,\kappa,\gamma)$-hat game to the imagination of the reader, though we propose one possible model: the logicians are all part of a big Zoom session and on Zoom, it is only possible to see finitely many other logicians at a time. Of course, they can scroll through the participants to reveal more, but it would take an infinite amount of time to see all other logicians (if $\lambda$ is infinite). We expect every logician to submit their guess after a finite amount of time, so if the logicians win they will do so after finitely much time has passed. This creates a strong incentive for the logicians to win: otherwise they might be stuck in a Zoom meeting forever.\medskip

The main difference from the type of hat problems we consider to the one which have been studied before is that the logicians have some agency on which hats to look at. In other problems, the hats whose colour a logician can take into account for their guess are predetermined by the rules of the game. It turns out that allowing the logicians to look at finitely many others is the sweet spot where this idea becomes interesting; if they are allowed to look at infinitely many others, it becomes too easy for the logicians to win, while a fixed finite amount makes it too hard.\\
Also, while we are not the first to consider this option, varying the amount of guesses the logicians are allowed to make seems to not have been explored a lot before.\medskip

Eventually, we will classify exactly for which $\lambda, \kappa, \gamma$, the $(\lambda,\kappa,\gamma)$-hat game is winning. Surprisingly, in many cases the logicians only start losing as soon as the number of colours is a \emph{MInA cardinal}. This is, in some sense, the remains of a certain large cardinal, an $\omega$-Erd\H{o}s cardinal. We now introduce the relevant notion of \emph{strongly MInA cardinals} \footnote{Coincidentally, Mina is the name of the first author's family dog. Pictures are available on request.}.
\begin{definition}
Let $\kappa$, $\gamma$ be infinite cardinals. 
Let $\mathcal F$ is a set of functions $f\colon \kappa^{n_f}\rightarrow [\kappa]^{<\gamma}$ for some finite $n_f$.
A set $A\subseteq\kappa$ is \textit{mutually $\mathcal F$-independent} if no $\alpha\in A$ can be covered by plugging in arguments from $A\setminus\{\alpha\}$ into functions in $\mathcal F$.
That is, if
\[\forall \alpha\in A\ \forall f\in\mathcal F\ \forall \alpha_1,\dots, \alpha_{n_f}\in A\setminus\{\alpha\}\ \alpha\notin f(\alpha_1,\dots,\alpha_{n_f}).\]
We say that $\kappa$ has the $\gamma$-\textbf{M}utually \textbf{In}dependent \textbf{A}ttribute ($\gamma$-MInA) or that $\kappa$ is a $\gamma$-MInA cardinal if for any countable such family $\mathcal F$, there is a countably infinite mutually $\mathcal F$-independent set $A\subseteq\kappa$.\\
A cardinal $\kappa$ is a \textit{strongly MInA}-cardinal if $\kappa$ has the $\gamma$-MInA for all $\gamma<\kappa$.
\end{definition}

\begin{remark}
While not quite immediate, the $\gamma$-MInA can be described equivalently in terms of an instance of the \textit{free subset property}: a cardinal $\kappa$ has the $\gamma$-MInA iff $\mathrm{Fr}_\gamma(\kappa,\omega)$ holds. Consequently, a cardinal $\kappa$ is strongly MInA iff $\mathrm{Fr}_\gamma(\kappa,\omega)$ holds for all $\gamma<\kappa$. We will discuss this later in Section \ref{sec:how-large-are-MInA-cardinals}. It will be more convenient for us to stick to the above terminology.
\end{remark}

Now we come to the first main Theorem of this paper.
\begin{restatable}{theorem}{maintheorem}\label{thm:all-games-classification}
    The $(\lambda,\kappa,\gamma)$-hat game is losing in exactly the following cases:
    \begin{enumerate}
        \item\label{case:gamecase1} $\lambda$ and $\gamma$ are finite and $\kappa \geq \lambda\cdot(\gamma-1)+1$.
        \item $\lambda$ is finite, $\gamma$ is infinite and $\kappa\geq \gamma^{+(\lambda-1)}$.
        \item\label{case:gamecase3} $\lambda$ is infinite, $\gamma$ is finite and $\kappa$ is infinite.
        \item\label{case:gamecase4} $\lambda$ and $\gamma$ are infinite and $\kappa$ is at least as large as the least strongly MInA cardinal above $\gamma$.
    \end{enumerate}
\end{restatable}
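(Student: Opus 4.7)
The proof decomposes into the four cases of the classification, which I handle separately, moving from combinatorially concrete settings to the delicate case (iv) where the $\gamma$-MInA property enters. In each case one must produce a winning strategy whenever $\kappa$ is strictly below the stated threshold and, in the other direction, a colouring that defeats every strategy whenever $\kappa$ is at or above it. A technical observation used throughout: any continuous $f:\kappa^X\to[\kappa]^{<\gamma}$ (discrete topology on $\kappa$) is locally determined by finitely many coordinates, so each strategy $f_\alpha$ can be viewed as an adaptive tree of finite queries and responses. This viewpoint becomes essential in case (iv), where countably many such strategies must be coded as a countable family of bounded-arity functions.

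Cases (i)--(iii) are combinatorially concrete. In case (i), $\lambda$ and $\gamma$ finite, all logicians effectively see each other and the problem reduces to the classical finite hat puzzle: a modular-arithmetic strategy identifying colours with pairs in $\lambda\times(\gamma-1)$ handles $\kappa\leq\lambda(\gamma-1)$, and a pigeonhole count of the at most $\lambda(\gamma-1)$ total guesses defeats the logicians for $\kappa\geq\lambda(\gamma-1)+1$. Case (ii) goes by induction on $\lambda$, exploiting that $\gamma^{+(\lambda-1)}$ is regular of cofinality greater than $\gamma$: one logician well-orders $\kappa$ and guesses an initial segment of size ${<}\gamma$ determined by what she sees, reducing to a $(\lambda-1)$-player subgame handled by the inductive hypothesis, while the cofinality of $\gamma^{+(\lambda-1)}$ lets the adversary diagonalize by transfinite recursion once $\kappa$ reaches the threshold. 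Case (iii) is a direct diagonalization: each logician produces only finitely many guesses, and since $\kappa$ is infinite the adversary can colour hats one at a time—respecting the locally finite dependencies of the continuous strategies—always avoiding the finitely many forbidden values.

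Case (iv) is the heart of the theorem. For the losing direction, with $\kappa\geq\mu$ where $\mu$ denotes the least strongly MInA cardinal above $\gamma$, the adversary may confine the colouring to $\mu\subseteq\kappa$, reducing to the $(\lambda,\mu,\gamma)$-game. Given any strategy, I restrict attention to a countable set $L\subseteq\lambda$ of logicians, fix an arbitrary colour for hats outside $L$, and use the adaptive-query-tree description of the continuous restricted strategies $g_\alpha:\mu^{L\setminus\{\alpha\}}\to[\mu]^{<\gamma}$ to produce a countable family $\mathcal F$ of bounded-arity functions $\mu^{n}\to[\mu]^{<\gamma}$. Because $\mu>\gamma$ is strongly MInA it has $\gamma$-MInA, yielding an infinite mutually $\mathcal F$-independent $A\subseteq\mu$; colouring the hats in $L$ by an injection into $A$ then ensures no logician's colour lies in her own guess set. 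Conversely, if $\kappa<\mu$: either $\kappa\leq\gamma$ and a single logician trivially wins by listing $\kappa$, or $\kappa$ is not strongly MInA and so some $\gamma'<\kappa$ witnesses the failure of $\gamma'$-MInA. Minimality of $\mu$, combined with the upward monotonicity of failure in $\gamma'$, lets us take $\gamma'\leq\gamma$; a witnessing countable family $\mathcal F$ of functions into $[\kappa]^{<\gamma'}\subseteq[\kappa]^{<\gamma}$ then supports the logicians' strategy, in which each logician probes finitely many others, evaluates finitely many $f\in\mathcal F$, and guesses the resulting finite union. The absence of an infinite mutually $\mathcal F$-independent set forces some logician's colour into her own guess set.

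The principal obstacle lies in the encoding in case (iv): converting countably many continuous strategies, whose query patterns branch adaptively on earlier responses, into a countable family of functions of \emph{fixed} finite arity as the MInA definition requires. A secondary obstacle on the winning side of (iv) is the reflection-style argument that locates the failure of $\gamma'$-MInA at a level at most $\gamma$, so that the witnessing functions have game-admissible output size; for this I would appeal to the equivalence of $\gamma$-MInA with the free subset property $\mathrm{Fr}_\gamma(\kappa,\omega)$ previewed in the Remark, which has well-studied reflection behaviour. Once these reductions are in place, both directions of case (iv) follow almost formally from the existence versus non-existence of mutually independent sets.
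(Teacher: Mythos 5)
Your outline follows the paper's decomposition, but three steps as written would fail or beg the question. First, in case (iii) the ``colour hats one at a time, avoiding the finitely many forbidden values'' diagonalization does not work: logician $n$'s guess is not determined until the hats she looks at are coloured, and those logicians may in turn be looking at hat $n$, so the dependency structure is circular rather than well-founded and there is no order in which the forbidden values become available. The paper breaks this circularity probabilistically (independent random colours with $p_n(m)\leq 2^{-n-2}$, so the expected number of correct guesses is below $1$; measurability of the relevant events is exactly what continuity buys), and for uncountable $\lambda$ it first extracts a countable set $Y$ closed under the looking relation and then invokes Lemma \ref{lem:partly-unwinnable-implies-unwinnable}. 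Second, in the losing direction of case (iv) you fix the colours of all logicians outside a countable $L$ arbitrarily and then defeat the logicians in $L$ via a mutually independent set; but nothing defeats the $\lambda$-many logicians outside $L$, whose colours you have already spent. The paper's fix is precisely Lemma \ref{lem:partly-unwinnable-implies-unwinnable}: one must produce, for \emph{every} strategy, a countable set $Y$ that is closed under looking (when its hats are coloured from the independent set $A$, its members never look outside $Y$ --- this is the point of the $\mathrm{next}_\alpha$ closure in Proposition \ref{prop:amount-of-logicians-irrelevant}) together with a partial colouring defeating exactly $Y$, and then iterate by transfinite recursion until all of $\lambda$ is exhausted.

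Third, in the winning direction of case (iv) the step ``minimality of $\mu$ plus reflection lets us take $\gamma'\leq\gamma$'' is not a known monotonicity: failure of $\gamma'$-MInA for large $\gamma'<\kappa$ does not formally yield failure of $\gamma$-MInA for the smaller $\gamma$. This implication is the content of Lemma \ref{lem:combining-strategies-of-different-cardinalities} and Proposition \ref{prop:least-strongly-losing} (equivalently, Corollary \ref{cor:stongly-losing-is-strongly-mina}), which the paper proves by composing a $(\omega,\kappa,\delta^+)$-strategy with a $(\omega,\delta,\gamma)$-strategy, and which ultimately rests on the well-ordered game and the nontrivial equivalence of FInA with MInA (a Koepke-style argument). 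Appealing to ``well-studied reflection behaviour'' of the free subset property here assumes the theorem you are trying to prove. A smaller but real omission on the same side: even with a witnessing family $\mathcal F$ in hand, the logicians only know that \emph{some} colour is generated from \emph{some} finite subset of the others, so one obtains countably many candidate strategies $\tau^N$ (plus a separate strategy for colourings with finite range), and combining them into a single winning strategy requires the two-group trick of Lemma \ref{lem:countably-many-strategies}.
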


The most interesting case here is the case where $\lambda$ and $\gamma$ are infinite. 
Here are two interesting remarks that follow from the Theorem in this case:

\begin{itemize}
    \item First, the game does not get easier at all when more logicians are introduced, no matter how many, and
    \item secondly, increasing the number of allowed guesses only helps the logicians if we increase it at least to the next \textit{losing threshold}, i.e.~the least $\kappa$ so that the $(\lambda,\kappa,\gamma)$-game is losing for fixed $\lambda,\gamma$.
\end{itemize}
We remark that in cases \ref{case:gamecase1}-\ref{case:gamecase3}, the relevant losing threshold provably exists. In contrast, in case \ref{case:gamecase4}, the losing threshold, while it can be quite low (it can be $\aleph_\omega$ if measurable cardinals are consistent, see Fact \ref{fact:aleph_omega}), cannot be proved to exist in the standard axiom system $\mathrm{ZFC}$. Even if the losing threshold exists for a smaller $\gamma$, it may not exist for some larger $\gamma$. Indeed, the existence of such a losing threshold is quite a bit stronger than $\mathrm{ZFC}$ alone. We will discuss and prove this in Section \ref{sec:how-large-are-MInA-cardinals}. On the flipside, this means that there are set theoretical universes in which the logicians live happily: they can win any $(\lambda, \kappa, \gamma)$-game with $\lambda$ and $\gamma$ infinite.\medskip

 We will also consider these hat games in a general setting with a restriction on the vision of the logicians: logicians are forbidden from looking at other logicians whose index is smaller in some partial order.

 \begin{definition}
Suppose $\mathbb P$ is a partial order and $\lambda,\gamma$ are cardinals. In the $(\mathbb P,\kappa,\gamma)$-hat game there is one logician for each $p\in\mathbb P$. Each logicians' hat is coloured in one of $\kappa$-many colours. During the game, logician $p$ is allowed to look at the hats of finitely many other logicians, but only those logicians $q$ with $p<_{\mathbb P} q$. Finally, they submit a guess consisting of ${<}\gamma$-many colours. The logicians win collectively if one of the logicians included the colour of their hat in their guess. 
 \end{definition}
 
 This includes e.g.~hat games on $(\mathbb N, \leq)$: countably many logicians are standing in a line and can are allowed to look at finitely many logicians in front of them. 

 Among the variants of the $(\lambda,\kappa,\gamma)$-hat games which restrict the vision of the logicians further, the $(\mathbb P,\kappa,\gamma)$-hat games are exactly those in which (potential) vision is transitive: if logician $a$ is allowed to look at logician $b$ and logician $b$ is allowed to look at logician $c$ then logician $a$ is allowed to look at logician $c$ as well.

 Again, we classify exactly in which of the $(\mathbb P,\kappa,\gamma)$-hat games there is a winning strategy for the logicians.
 It turns out that most of the complexity is already captured by the unrestricted $(\lambda, \kappa, \gamma)$-hat games.
 Indeed, in all cases where $\mathbb P$ contains an infinite increasing sequence and $\gamma$ is infinite, the answer is the same.

 \begin{restatable}{theorem}{potheorem}\label{thm:all-partial-ordered-games-classification}
    Let $\mathbb P$ be a partial order. The $(\mathbb P,\kappa,\gamma)$-hat game is winning iff 
    \begin{enumerate}
        \item\label{case:pogamecase1} either $\kappa<\gamma$ or
        \item\label{case:pogamecase2} $\mathbb P$ contains an infinite increasing sequence, $\gamma$ is infinite and there is no strongly MInA cardinal in the interval $(\gamma,\kappa]$.
    \end{enumerate}
\end{restatable}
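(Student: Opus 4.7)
The plan is to establish both directions, using Theorem A together with two basic monotonicity observations: restricting vision (from unrestricted to $\mathbb{P}$-restricted) makes the game harder for the logicians, and enlarging the palette from $\mu$ to $\kappa \geq \mu$ also makes it harder. For sufficiency, case (i) is immediate as each logician may simply list all of $\kappa$ as their guess. For case (ii), fix an infinite increasing sequence $p_0 <_\mathbb{P} p_1 <_\mathbb{P} \cdots$ in $\mathbb{P}$. Since $\gamma$ is infinite and no strongly MInA cardinal lies in $(\gamma, \kappa]$, Theorem A (Case \ref{case:gamecase4} fails) supplies a winning strategy for the unrestricted $(\omega, \kappa, \gamma)$-game. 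Playing this strategy along $(p_i)$ while respecting the partial order requires a \emph{forward-looking} variant, where logician $i$'s guess depends only on hats indexed by $\{j > i\}$; the plan is to argue that such strategies exist whenever $\gamma$ is infinite and the unrestricted game on $\omega$ is winning, and then let all logicians outside the chain make arbitrary guesses, with the chain alone securing collective victory.

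For necessity, assume neither (i) nor (ii) holds, so $\kappa \geq \gamma$, together with at least one of: (a) $\mathbb{P}$ contains no infinite increasing sequence; (b) $\gamma$ is finite; (c) some strongly MInA cardinal $\mu$ lies in $(\gamma, \kappa]$. In case (a), the reverse order $\mathbb{P}^{\mathrm{op}}$ is well-founded, so every $p \in \mathbb{P}$ has an ordinal rank $\mathrm{rk}(p) = \sup\{\mathrm{rk}(q) + 1 : p <_\mathbb{P} q\}$, and $p <_\mathbb{P} q$ forces $\mathrm{rk}(q) < \mathrm{rk}(p)$. The adversary colours hats by transfinite induction on rank: at stage $\alpha$, each rank-$\alpha$ logician sees only lower-rank hats (already coloured), so their guess set of size ${<}\gamma$ is determined, and since $\kappa \geq \gamma$ the adversary places the hat outside that set. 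In case (b), if (a) fails then $|\mathbb{P}| \geq \aleph_0$; if $\kappa$ is also infinite, Theorem A Case \ref{case:gamecase3} yields that the unrestricted $(|\mathbb{P}|, \kappa, \gamma)$-game is losing, which transfers to $(\mathbb{P}, \kappa, \gamma)$ by the vision-restriction monotonicity. If instead $\kappa$ is finite, then $\kappa^\mathbb{P}$ is compact, each $g_p$ depends on a fixed finite $F_p \subseteq \{q : q > p\}$, and each set $L_p = \{h : h_p \notin g_p(h)\}$ is clopen; any finite intersection $\bigcap_{p \in S} L_p$ is nonempty by processing $S$ in $\leq_\mathbb{P}$-decreasing order (the current $\mathbb{P}$-maximal $p$ has $F_p$ either outside $S$ or already handled, so $h_p$ can be set outside the determined guess), and compactness yields a defeating coloring. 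Case (c) reduces to Theorem A Case \ref{case:gamecase4} applied to $\mu$: again, either (a) applies or $|\mathbb{P}| \geq \aleph_0$ and the unrestricted $(|\mathbb{P}|, \mu, \gamma)$-game is losing, after which both monotonicities transfer the loss to $(\mathbb{P}, \kappa, \gamma)$.

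The central technical obstacle is the forward-looking refinement of the unrestricted winning strategy from Theorem A Case \ref{case:gamecase4} needed for the sufficiency of (ii); all other cases reduce cleanly via Theorem A plus the well-foundedness and compactness arguments outlined above.
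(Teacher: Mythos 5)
Your treatment of the losing direction is essentially the paper's: the rank induction in case (a) is the same transfinite construction the paper performs by repeatedly picking maximal elements, your finite-intersection-property argument in case (b) for finite $\kappa$ is the paper's net/cluster-point compactness argument in different clothing, and the reductions to Theorem~\ref{thm:all-games-classification} via the two monotonicities (shrinking the palette and shrinking vision both help the logicians) are exactly the second implication of the paper's ``sandwich''. All of that is fine.

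However, there is a genuine gap exactly where you flag ``the central technical obstacle'': the existence of a \emph{forward-looking} winning strategy for the $(\omega,\kappa,\gamma)$-game whenever the unrestricted game is winning. This is not a refinement one can wave at --- it is the main new content of Theorem~\ref{thm:all-partial-ordered-games-classification} beyond Theorem~\ref{thm:all-games-classification}, and the paper devotes Section~\ref{sec:hat-games-on-partial-orders} to it. Concretely, the well-ordered $(\omega,\kappa,\gamma)$-game is losing iff $\kappa$ has the $\gamma$-FInA, while the unrestricted game is losing iff $\kappa$ has the $\gamma$-MInA; a mutually independent set is trivially forwards independent, but the converse implication (FInA $\Rightarrow$ MInA), which is what your sufficiency argument needs, is false at the level of individual sets (there are forwards independent sets that are not mutually independent) and only holds at the level of the cardinal property. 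Establishing it is Lemma~\ref{lem:FInA-MInA-equivalence}, a nontrivial Koepke-style argument that recursively extracts a mutually independent set from a forwards independent one using a family $\mathcal F$ closed under composition, permutation, and minimal witnesses, together with Proposition~\ref{prop:FInA-step-up} to reduce to regular $\gamma$. The authors explicitly note that they know of no direct translation of an unrestricted winning strategy into a forward-looking one, so your proposal cannot be completed along the route you sketch without supplying this combinatorial equivalence (or an equivalent amount of work).
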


As a consequence of this classification, the existence of a winning strategy in the $(\mathbb P,\kappa,\gamma)$-hat game depends very little on the partial order $\mathbb P$ in question: we need only know whether or not $\mathbb P$ has an infinite increasing sequence.\medskip

 In the next Section we will consider the cases where $\gamma=2$ (meaning the logicians get exactly 1 guess) and $\lambda\leq\omega$ (i.e.~there are at most countably many logicians), as well as the case where $\lambda = 3$ and $\gamma=\omega$ (i.e.~there are $3$ logicians, each of which is allowed to make finitely many guesses).\medskip
 
 In Section \ref{sec:hat-games-with-large-losing-threshold}, we relate the $(\omega,\kappa,\gamma)$-hat games with infinite $\gamma$ to $\gamma$-MInA cardinals. We prove Theorem \ref{thm:all-games-classification} in the following Section \ref{sec:proof-of-theorem-a}.\medskip
 
 Finally, we solve the $(\mathbb P,\kappa,\gamma)$-hat games in Section \ref{sec:hat-games-on-partial-orders}. We finish with a discussion on the size of strongly MInA cardinals in Section \ref{sec:how-large-are-MInA-cardinals}, which also doubles as a short survey on the free subset property.

\section{Hat Games with Small Losing Threshold}\label{sec:hat-games-with-small-losing-threshold}

A great example of a hat puzzle, and the inspiration of this paper, is the following.
While many logical puzzles are folklore, this one can be attributed to Elliot Glazer\cite{Gla23}.
\begin{question}[{\bf The $(3,\mathbb R,\omega)$-Hat Game}]\label{que:three-people-real}

Three logicians are wearing a hat with a real number printed on it.
They see the hats that the other people are wearing.
They write down a finite list of real numbers.
If one of the logicians writes down the number on their hat, all logicians win.
Can they develop a strategy such that they will certainly win?
\end{question}
Some remarks are in order.
First of all, this puzzle takes place in some kind of ideal world, where every real number fits on a hat.
Moreover, we assume that the logicians have access to the full axiom of choice, i.e.~they can agree on a choice function for any set that they would like to use in their strategy.
Note that the logicians, of course, cannot see what the other logicians write down on their list.
We do not want to spoil the answer to Question \ref{que:three-people-real} now, but see Remark \ref{rem:three-people-real}.

We will now systematically study variants of this hat problem.
To do this we first consider a slightly simpler kind of problem: one where the logicians only get one guess, instead of finitely many.
\subsection{Finitely many logicians, one guess}

\begin{question}[\textbf{The $(n,A,2)$-Hat Game}]\label{que:finite-people-one-guess}

Let $n$ be a natural number, let $A$ be a non-empty set  of colours and consider the following puzzle: $n$ logicians wear a hat coloured in an element of $A$.
They can see the hats of all other people, but not their own.
Each logician writes down a colour from $A$ on a piece of paper.
If any logician writes down the colour of their own hat, they all win.
For which $n$ and $A$ do the logicians have a winning strategy?
\end{question}
\begin{answer}
There is a winning strategy if and only if $|A|\leq n$.
\end{answer}
\begin{proof}
Suppose that $|A|\leq n$.
We will describe a winning strategy for the logicians.
We might as well say that the colours are $A = \{0,1,\ldots,n-1\}$.
Number the logicians $0,1,\ldots,n-1$ and let the colours on their hat be $h_0,h_1,\ldots,h_{n-1}$.
Let $s$ be the image of $h_0+h_1 + \ldots+h_{n-1}$ in $\mathbb Z/n$.
Logician $i$ can see all hats except their own.
Summing all these values together modulo $n$ gives $s - h_i$.
Logician $i$ then writes down the colour $i - (s-h_i)$, modulo $n$.
To see that this strategy works, note that logician $s$ writes down $h_s$, and thus guesses correctly.

Now suppose that $|A|\geq n+1$, we will show that there is no winning strategy.
We can throw away all but $n+1$ elements of $A$, since that will make the game only easier.
Then we may also assume that $A = \{0,1,\ldots,n\}$.
Now suppose, for a contradiction, that there is some strategy $S$ for which the logicians always win.
colour each hat with a colour drawn uniformly at random from $A$, independently from the other hats.
The probability that any logician $i$ guesses correctly equals $\frac1{n+1}$: to see this, note that we can assign all other logicians their hat colours first.
Then we can already determine what logician $i$ will guess, and their hat colour is still uniformly random, so the probability that the hat colour equals the guess is $\frac1{n+1}$.

Now we use the additivity of the expectation value.
The expected value of a sum of random variables equals the sum of the expectation values, even if they are not independent.
The expected number of correct guesses by logician $i$ equals $\frac1{n+1}$ (this is just another fancy way of saying that the probability of guessing correctly is $\frac1{n+1}$).
So the expected number of correct guesses by all logicians equals $\frac n{n+1}$.
Since this is less then 1, there has to be at least one assignment of colours for which the strategy fails.
\end{proof}

\subsection{Infinitely many logicians, one guess}
Now we consider the same game, but there are countably infinitely many logicians. The following is a variation of a hat problem by Gabay-O'Connor. See the introduction of \cite{HarTay13} for the history of this problem.

\begin{question}\label{que:infinitely-many-logicians-not-continuous}
Let $A$ be a non-empty set of colours.
Infinitely many logicians, numbered $1,2,\ldots$ all wear a hat.
Each hat is coloured in an element of $A$.
The logicians see all other hats, and write down a guess for the colour of their own hat.
They win if at least one of them is correct.
For which $A$ can the logicians devise a winning strategy?
\end{question}
\begin{answer}
Surprisingly, the logicians can win the game for every set $A$.
Even more, they can devise their strategy in such a way, that whatever we write on the hats, all but finitely many logicians will be correct.
\end{answer}
\begin{proof}
The trick to solving this game is one that is used in many logic puzzles.
Consider the set $A^\omega$ of infinite sequences of elements of $A$.
On this set we introduce the equivalence relation $\sim$, given by: $(a_n)\sim (a'_n)$ if there is $N$ such that $a_n=a'_n$ for all $n\geq N$.
For each equivalence class $C \in A^\omega/\sim$ we choose an element $f(C) \in C$; this is part of the strategy that the logicians agree on beforehand.
Now, after the hat colours $h_n$ are distributed, all logicians can see the equivalence class $C$ of $(h_n)$ in $A^\omega/\sim$, since they see all hats but their own.
Let $(g_n) = f(C)$.
Then, since $(h_n)_{n\in\mathbb N}$ and $(g_n)_{n\in\mathbb N}$ are in the same equivalence class, there is some $N$ such that $h_n = g_n$ for $n\geq N$.
Now logician $n$ will guess that their own hat colour is $g_n$.
Then logician $n$ is correct if $n\geq N$, so all but finitely many logicians are correct.
\end{proof}

However, the situation changes if we only allow each logician to look at finitely many hats.
\begin{question}[\textbf{The $(\omega,A,2)$-Hat Game}]\label{que:infinitely-many-people-one-guess}

Let $A$ be a non-empty set of colours.
Infinitely many logicians, numbered $1,2,\ldots$ all wear a hat.
Each hat is coloured in an element of $A$.
Each logician can look at finitely many other hats, one by one, and then writes down a guess for the colour of their own hat.
The logicians win if at least one of them is correct.
For which $A$ can the logicians devise a winning strategy?
\end{question}
We note that the logicians do not have to decide in advance, which hats or how many hats they would like to see.
As an example, the first logician might look at the second hat, see the colour $n$, and then take a look at the $n$-th hat (if $n>1$).
The requirement that each logician looks at only finitely many hats is then equivalent to the ``guessing function" $A^\omega \to A$ being continuous, when we give $A$ the discrete topology and $A^\omega$ the product topology.

\begin{answer}
There is a winning strategy if and only if $A$ is finite.
\end{answer}
\begin{proof}
It is clear that the logicians can win if $A$ is finite; they can use the same strategy as discussed in Question \ref{que:finite-people-one-guess}.
So we just have to show that the logicians do not have a winning strategy if $A$ is infinite.
We may as well assume that the colours are $A = \omega$.
Suppose for a contradiction that the logicians have a winning strategy $S$.
We will again assign the hat colours randomly, but it is slightly more technical now, since we are working on an infinite space where not every set is measurable.

For each $n\in\omega$, let $p_n\colon \omega \to [0,1]$ be a probability measure satisfying $0 < p_n(m) \leq 2^{-n-2}$ for all $m\in\omega$.
We colour the hat of logician $n$ with a colour in $\omega$ according to the probability distribution $p_n$, independent from the hats of the other logicians.

For a finite subset $I\subseteq \omega$ and a function $f\colon I\to \omega$, we have a \emph{cylinder set} 
$$Z(f) = \{h\colon \omega\to \omega\mid h\text{ extends }f\}.$$
This is a measurable set and the probability that the random colour distribution is an element of $Z(f)$ equals $\prod_{i\in I}p_i(f(i))$.
Let $n$ be a logician and let $h$ be a hat function.
Let $I$ be the set of logicians that logician $n$ looks at to determine their guess.
Then for each hat function in the cylinder set $Z(h_{|I})$, logician $n$ will make the same guess.
Thus we know that the set of hat functions where logician $n$ guesses number $m$ can be written as a union of disjoint cylinder sets.
Since each cylinder set has a positive probability, this union is necessarily countable.
Thus in particular it is a measurable set, and the probability that logician $n$ guesses colour $m$ is well-defined.
Then the probability that logician $n$ guesses correctly is also well-defined.
Since $p_n(m)\leq 2^{-n-2}$ for all $m\in\omega$, the probability that the colour of the hat of logician $n$ equals the guess of logician $n$ can be at most $2^{-n-2}$.
Summing up over all $n$ gives a probability less than 1, hence the strategy cannot be always winning, and we have a contradiction.
\end{proof}
If we tried to apply this proof to Question \ref{que:infinitely-many-logicians-not-continuous}, it would not work because the set of hat functions where logician $n$ guesses colour $m$ is not measurable, and hence the probability that logician $n$ guesses correctly is not well-defined.

\subsection{Finitely many logicians, finitely many guesses}

Now we go back to the case where we have only finitely many logicians, but each logician can make finitely many guesses.
If there is only 1 logician, with a hat colour in $A$, the logician can only win if $A$ is finite, since they do not have any information.
Thus the first interesting case is with 2 logicians.
From now on we will assume that the set of possible hat colours is a cardinal, since only its cardinality is relevant anyway.

\begin{question}[\textbf{The $(2,\kappa,\omega)$-Hat Game}]\label{que:two-people-kappa}
Let $\kappa$ be a cardinal.
Three logicians are wearing a hat coloured in an element of $\kappa$.
They see the hats that the other logicians are wearing.
They write down a finite list of colours from $\kappa$.
If one of the logicians writes down the colour of their hat, they both win.
For which $\kappa$ can they develop a strategy such that they will certainly win?
\end{question}
\begin{answer}
    They can win if and only if $\kappa\leq \omega$.
\end{answer}
\begin{proof}
    Suppose $\kappa = \omega$.
    A winning strategy is as follows: each logician writes all colours up to and including the one that they see on the other logician's hat.
    Since the colour of one of the logicians has got to be smaller than the other, the corresponding logician below that hat will write down their colour.

    To see that $\kappa = \omega_1$ is not winning, we can make the game a little bit easier by promising to the first logician that the colour of their hat is actually an element of $\omega$.
    Given all possible hats of the first logician, the second logician will make only countably many guesses in total.
    Thus we can give the second logician a hat colour that they will never guess.
    After this we can determine the first logician's finitely many guesses and also give them a hat colour outside of their list.
    Then both will guess incorrectly.
\end{proof}

Now we can return to our original question.
\begin{question}[\textbf{The $(3,\kappa,\omega)$-Hat Game}]\label{que:three-people-kappa}
Let $\kappa$ be a cardinal.
Two logicians are wearing a hat coloured in an element of $\kappa$.
They see the hats that the other logician is wearing.
They write down a finite list of colours in $\kappa$.
If one of the logicians writes down the colour of their hat, they all win.
For which $\kappa$ can they develop a strategy such that they will certainly win?
\end{question}
\begin{answer}
    They can win if and only if $\kappa\leq \omega_1$.
\end{answer}
\begin{proof}
    We consider the case $\kappa = \omega_1$.
    All logicians agree beforehand on an embedding $i_\alpha\colon\alpha\curvearrowright\omega$ for each countable ordinal $\alpha$.
    Now the strategy of each logician is as follows: look at the largest of the other two logicians' hat colours, call it $\alpha$.
    Then the colour on the other visible hat $\beta$ is an element of $\alpha + 1$.
    Then write down the list $i_{\alpha + 1}^{-1}([0,i_{\alpha+1}(\beta)])$.
    This strategy works because there is one logician with the largest hat colour, and after applying the embedding of its successor ordinal into $\omega$, one of the other logicians has the smallest hat colour and that one guesses correctly.

    Now we show that $\kappa = \omega_2$ is not winning.
    We can make the game easier by promising the first logician that their hat is coloured in an element of $\omega$ and promising the second logician that their hat colour is an element of $\omega_1$.
    All possible guesses of the third logician together have only a cardinality of at most $\omega_1$.
    Thus we can give the third logician a hat colour that they will never guess.
    After this, all possible guesses of the second logician will make a countable set.
    So we can still give the second logician a hat colour that they will never guess, while keeping our promise.
    After that the finitely many guesses of the first logician are determined so we can also give them a hat colour in $\omega$ that they will not guess.
\end{proof}

\begin{remark}\label{rem:three-people-real}
    This shows that the answer to the original Question \ref{que:three-people-real} is independent of $\mathrm{ZFC}$: indeed, its affirmation is equivalent to the Continuum Hypothesis, that states that the cardinality of the reals equals $\omega_1$.
\end{remark}

The case for $n$ logicians is very similar: indeed, they can win the $(n,\kappa,\omega)$-hat game exactly if $\kappa\leq \omega_{n-2}$ (see Theorem \ref{thm:all-games-classification}).

\section{Hat Games with Large Losing Threshold}\label{sec:hat-games-with-large-losing-threshold}

In this Section we finally make relate hat games to MInA cardinals. We show that the $(\omega,\kappa,\gamma)$-hat game is losing exactly if $\kappa$ is a $\gamma$-MInA cardinal.\\
It will be convenient for us to replace finitary functions $f\colon\kappa^{n_f}\rightarrow[\kappa]^{<\gamma}$ by continuous functions $f\colon\kappa^\omega\rightarrow[\kappa]^{<\gamma}$ in the definition of $\gamma$-MInA. Here, the topology on $\kappa^\omega$ is the product topology and $\kappa$ as well as $[\kappa]^{<\gamma}$ are construed as discrete spaces.

\begin{lemma}\label{lem:mina-with-cont-fcts}
A cardinal $\kappa$ has the $\gamma$-MInA if and only if for every countable set $\mathcal F$ of continuous functions $f\colon\kappa^\omega\rightarrow[\kappa]^{<\gamma}$ there is an infinite set $A$ so that $\alpha\notin \bigcup f[(A\setminus\{\alpha\})^\omega]$ for all $\alpha\in A$.
We will also say that such sets $A$ are mutually $\mathcal F$-independent.
\end{lemma}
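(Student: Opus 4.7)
The plan is to establish both directions by translating between finitary and continuous functions. A continuous $f\colon\kappa^\omega\to[\kappa]^{<\gamma}$ is, by discreteness of $[\kappa]^{<\gamma}$, locally determined by finitely many coordinates; conversely, a finitary $f\colon\kappa^{n_f}\to[\kappa]^{<\gamma}$ trivially extends to a continuous function depending only on the first $n_f$ coordinates. Using these translations, each family of one kind can be replaced by a countable family of the other, and mutual independence can be shown to transfer back and forth.

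For ($\Leftarrow$), given a countable family $\mathcal F$ of finitary functions, I attach to each $f\in\mathcal F$ the continuous function $\tilde f(x)=f(x_0,\dots,x_{n_f-1})$. Applying the hypothesis to $\{\tilde f:f\in\mathcal F\}$ yields an infinite mutually independent $A$; shrinking to a countable subset preserves independence, so $A$ may be taken countably infinite. Given $\alpha\in A$, $f\in\mathcal F$, and $\alpha_1,\dots,\alpha_{n_f}\in A\setminus\{\alpha\}$, padding to $x=(\alpha_1,\dots,\alpha_{n_f},\beta,\beta,\dots)\in(A\setminus\{\alpha\})^\omega$ for some $\beta\in A\setminus\{\alpha\}$ gives $f(\alpha_1,\dots,\alpha_{n_f})=\tilde f(x)\not\ni\alpha$, as required.

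For ($\Rightarrow$), given a countable family $\mathcal F$ of continuous functions, I split each $f\in\mathcal F$ into finitary pieces: for $n<\omega$ and $s\in\kappa^n$, set $g_{f,n}(s)$ to the constant value of $f$ on the cylinder $\{y\in\kappa^\omega:y\restriction n=s\}$ when $f$ is constant there, and $\emptyset$ otherwise. The family $\mathcal G=\{g_{f,n}:f\in\mathcal F,\,n<\omega\}$ is countable, so the $\gamma$-MInA produces a countably infinite mutually $\mathcal G$-independent $A\subseteq\kappa$. For $\alpha\in A$, $f\in\mathcal F$, and $x\in(A\setminus\{\alpha\})^\omega$, continuity with discrete codomain supplies some $n$ with $f$ constant on the cylinder determined by $x\restriction n$, hence $f(x)=g_{f,n}(x\restriction n)\not\ni\alpha$. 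I expect the only subtlety is noting that discreteness of the codomain is what guarantees a finite stabilizing initial segment for each point; the remainder is routine bookkeeping.
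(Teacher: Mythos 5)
Your proposal is correct and follows essentially the same route as the paper: the easy direction extends finitary functions to continuous ones depending on finitely many coordinates, and the substantive direction decomposes each continuous $f$ into the countable family of finitary functions recording its constant value on cylinders (the paper's $f_n^k$ are exactly your $g_{f,n}$), then invokes continuity to recover $f(x)$ from a long enough initial segment. The extra care you take (shrinking $A$ to a countably infinite subset, padding tuples with an element of $A\setminus\{\alpha\}$) is sound and only makes explicit what the paper leaves implicit.
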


\begin{proof}
Clearly, this property implies the $\gamma$-MInA since any function $f\colon\kappa^n\rightarrow[\kappa]^{<\gamma}$ can be considered as a continuous function $\kappa^\omega\rightarrow[\kappa]^{<\gamma}$ by forgetting about all but the first $n$ arguments.

Now suppose that $\kappa$ has the $\gamma$-MInA and we will show that this stronger property also holds. Suppose $\mathcal F=\{f_n\mid n<\omega\}$ is a countable set of continuous functions $f_n\colon\kappa^\omega\rightarrow[\kappa]^{<\gamma}$. For $n, k<\omega$, define 
$$f_{n}^k\colon \kappa^k\rightarrow[\kappa]^{<\gamma}$$
by 
\[f_n^k(\alpha_1,\dots,\alpha_k)=\begin{cases}
    X & \text{if } f_n(s)=X \text{ for any }s\in\kappa^\omega \text{ beginning with }\alpha_1,\dots,\alpha_n\\
    \emptyset & \text{ else.}
\end{cases}\]
Then $\mathcal F'=\{f_n^k\mid n, k<\omega\}$ is a countable set of finitary functions on $\kappa$, so there is a countable mutually $\mathcal F'$-independent set $A\subseteq\kappa$. If $\alpha\in A$, $n<\omega$ and $s\in (A\setminus\{\alpha\})^\omega$ then 
$f_n(s)=f_n^k(s(0), s(1), \dots, s(k-1))$ for any sufficiently large $k$ by continuity of $f_n$. It follows that $\alpha\notin f_n(s)$, so $A$ is mutually $\mathcal F$-independent.
\end{proof}

\subsection{Countably many logicians, finitely many guesses}

\begin{lemma}\label{lem:countably-many-strategies}
    Consider an $(\omega,\kappa,\gamma)$-game with $\gamma\geq\omega$.
    Suppose there is a sequence of continuous strategies $\tau^N$, such that for all hat functions $h\colon\omega\to\kappa$, there is $N<\omega$ and $i<\omega$ such that $h(i)\in \tau^N_i(h)$.
    Then this game is winnable.
\end{lemma}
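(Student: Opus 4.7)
My plan is to define the combined strategy by taking the pointwise union of guesses: $\sigma_i(h) := \bigcup_{N<\omega} \tau^N_i(h)$. Continuity follows immediately: each $\tau^N_i$ is continuous and $[\kappa]^{<\gamma}$ carries the discrete topology, and a countable union of continuous maps into a discrete space is again continuous. The winning property is then instant from the hypothesis — for any $h$, the witnessing pair $(N,i)$ gives $h(i)\in\tau^N_i(h)\subseteq\sigma_i(h)$. The only delicate point is the cardinality bound $|\sigma_i(h)|<\gamma$.

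For this, one estimates $|\sigma_i(h)|\leq\aleph_0\cdot\sup_N|\tau^N_i(h)|$. When $\operatorname{cf}(\gamma)>\omega$ — in particular whenever $\gamma$ is a regular uncountable cardinal — the supremum of a countable family of cardinals below $\gamma$ stays strictly below $\gamma$, and multiplying by $\aleph_0$ preserves this, so $|\sigma_i(h)|<\gamma$ and we are done. This handles the case I expect to be the primary application of the lemma.

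The harder case is $\operatorname{cf}(\gamma)=\omega$ (including $\gamma=\omega$), where the naive union can reach cardinality exactly $\gamma$. Here I would refine by fixing a partition $\omega = A_0\sqcup A_1\sqcup\dots$ into infinite sets and letting logician $j\in A_N$ use only the finite subfamily $\tau^0,\dots,\tau^N$, that is $\sigma_j(h):=\bigcup_{M\leq N}\tau^M_j(h)$, which is a finite union of sets of size ${<}\gamma$ and hence has size ${<}\gamma$. The main obstacle is that the partitioned strategy need not automatically win: the hypothesis produces some pair $(M_0,j_0)$, but nothing forces $M_0$ to lie below the block index of $j_0$.

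I would overcome this by first replacing each $\tau^N$ by a modified strategy $\tilde\tau^N$ whose set of winning logicians (for any given $h$) is cofinal in $\omega$ whenever $\tau^N$ wins for $h$ at all. Continuity is the key tool: since $\tau^N_i(h)$ depends on only finitely many hats, for most indices $j$ a logician at position $j$ can faithfully simulate $\tau^N_i$ for a prescribed $i$ without needing to know their own hat, and packaging many such simulations into the guess of $\tilde\tau^N_j$ produces the desired spreading of winning positions while preserving the collective winning property. With winning positions for each relevant strategy cofinally distributed, the partitioned construction then succeeds, and this modification step is the place where I expect the real work of the proof to lie.
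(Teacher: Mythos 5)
There are two genuine gaps here, and both trace back to the same missing idea. First, the claim that ``a countable union of continuous maps into a discrete space is again continuous'' is false: continuity into a discrete space means locally constant, i.e.\ depending on only finitely many coordinates near each point, and the pointwise union $\sigma_i(h)=\bigcup_N\tau^N_i(h)$ can depend on infinitely many coordinates even when each $\tau^N_i$ is continuous. For instance, if $\tau^N_i(h)=\{h(j_N)\}$ for distinct indices $j_N$, each $\tau^N_i$ is continuous but the union is essentially $h\mapsto\mathrm{ran}(h)$, which is not. Combinatorially, a logician following $\sigma_i$ would have to look at infinitely many hats. So your ``easy case'' $\operatorname{cf}(\gamma)>\omega$ does not go through either; the cardinality bound was never the only delicate point. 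Second, the spreading modification does not work as described: if logician $j$ simulates $\tau^N_i$ for some $i\neq j$, the resulting set is a guess for hat $i$, not for hat $j$, so packaging such simulations into $\tilde\tau^N_j$ does not help logician $j$ include \emph{their own} colour. Moreover, even if you arranged the winning set of $\tilde\tau^N$ to be infinite for every $h$ against which $\tau^N$ wins, an infinite subset of $\omega$ need not meet any block $A_M$ with $M\geq N$, since $\bigcup_{M<N}A_M$ is itself infinite. You correctly identify that the real work lies in this step, but the step is not supplied.

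The missing idea is a mechanism by which each logician can compute, from hats they are permitted to see, a \emph{common finite} bound on which strategies to aggregate; then only finite unions arise and both continuity and the bound $|\sigma_i(h)|<\gamma$ are automatic for every infinite $\gamma$, with no case split on cofinality. The paper's proof achieves this by splitting the logicians into two groups indexed by $\omega\times\{0,1\}$: each logician in group $b$ inspects only the other group to determine the least $N_{1-b}$ such that some logician $i<N_{1-b}$ in group $1-b$ wins under some $\tau^N$ with $N<N_{1-b}$ (a search terminating after finitely many hat inspections, by continuity of the $\tau^N$), and then aggregates the strategies $\tau^N$ for $N<N_{1-b}$ applied within their own group. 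Comparing $N_0$ and $N_1$ shows one group must contain a correct guesser. You should replace both the naive union and the block-partition scheme with some version of this cross-inspection device.
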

\begin{proof}
    We describe a winning strategy.
    Divide the logicians in two groups, label them by elements of $\omega\times\{0,1\}$.
    For $b\in\{0,1\}$ let $g_b\colon \omega \to \omega\times\{0,1\}$ sending $i$ to $(i,b)$.
    Let $N_b$ be the minimal number such that there is $i<N_b$ and $N<N_b$ with $h(i,b) \in \tau^N_i(h\circ g_b)$.
    
    Any logician in group $b$ can determine $N_{1-b}$ as follows: for an integer $N$, look at the hats $N\times\{1-b\}$.
    For each pair $(i,N')\in N^2$, see what logician $i$ would do for the strategy $\tau^{N'}$ and the hat function $h\circ g_{1-b}$.
    Since we only try to apply the strategy to finitely many logicians, we only have to look at finitely many hats.
    We find out the list of guesses that logician $i$ would write down, and we can check whether this list contains $h(i,1-b)$.
    If so, then $N_{1-b} \leq N$.
    Going through all integers $N$ one by one, we will eventually find $N_{1-b}$ in finite time.

    Now the logician in group $b$ tries each strategy $\tau^N$ for $N<N_{1-b}$.
    They write down the union of all guesses they make.
    This is a finite union so the list of guesses is still less than $\gamma$.
    Now if $N_0\leq N_1$, one of the logicians in group 0 will guess correctly, and if $N_1\leq N_0$, one of the logicians in group 1 will guess correctly.  
    
\end{proof}

\begin{question}[The $(\omega,\kappa,\omega)$-Game]
Let $\kappa$ be a cardinal. Countably infinite many logicians are wearing a hat coloured in an element of $\kappa$. Each logician can look at finitely many other hats and then writes down a finite list of colours in $\kappa$. If one of the logicians writes down their hat colour, all the logicians win. For which $\kappa$ do the logicians have a winning strategy?
\end{question}

\begin{answer}\label{answer:omega-logicians-finitely-many-guesses}
They can win if and only if $\kappa$ is not an $\omega$-MInA cardinal.
\end{answer}

\begin{proof}
First suppose the logicians have a winning strategy.
For $n<\omega$, let $f_n\colon\kappa^{\omega-\{n\}}\rightarrow[\kappa]^{<\omega}$ describe the strategy of logician $n$, so that the sequence $\langle f_n\mid n<\omega\rangle$ is winning for the logicians.
To be more precise, if $h\in\kappa^{\omega}$ so that $h(n)$ is the hat colour of logician $n$ then the list that logician $n$ writes is $f_n(h\upharpoonright \omega\setminus\{n\})$.
Then $\mathcal F=\{f_n\mid n<\omega\}$ is a countable collection of continuous functions and we will show that there is no countably infinite mutually $\mathcal F$-independent set, so that $\kappa$ is not $\omega$-MInA by Lemma \ref{lem:mina-with-cont-fcts}.
So let $A\subseteq\kappa$ be an infinite subset and let $h\in A^\omega$ be a sequence of hat colours drawing from $A$ without repetition.
As the logicians win, there is some $n$ so that the colour $h(n)$ of logician $n$'s hat appears in their list $f_n(h\upharpoonright\omega\setminus\{n\})$ of guesses.
As $h$ does not repeat a colour, $h(n)$ is different from all colours in $h\upharpoonright\omega\setminus\{n\}$ and consequently, $A$ is not mutually $\mathcal F$-independent.\medskip

For the other direction, assume that $\kappa$ is not a $\omega$-MInA cardinal.
This means that there is a countable collection $\mathcal F=\{f_n\mid n<\omega\}$ of functions $f\colon\kappa^{n_f}\rightarrow[\kappa]^{<\omega}$ so that there is no countably infinite mutually $\mathcal F$-independent set.
We will go on to define countably many strategies by describing how logician $n$ comes up with their list of guesses in each of them.  
 
For a given $N<\omega$, we will now describe a strategy $\tau^N$ for the logicians. Suppose $h\in\kappa^\omega$ is a colouring of the logicians' hats. 

\noindent Intuitively, logician $n$ assumes that for some $j\leq N$, 
\begin{equation*}
h(n)\in f_j(\alpha_1,\dots, \alpha_m)\text{ for some hats }\alpha_1,\dots,\alpha_m\in h[\{0,\dots, N\}\setminus\{n\}].
\end{equation*}
 Logician $n$ can use this idea to come up with a finite list of guesses which contains their true hat colour if this assumption is correct. This list is 
$$\bigcup_{j\leq N}\bigcup_{n\neq k_1\leq N,\dots, n\neq k_l\leq N} f_j(h(k_1),\dots, h(k_l)).$$
Of course, logician $n$ need only look at the hats of the first $N+1$ other logicians to calculate this list. Now suppose that $h\in\kappa^\omega$ is a colouring of hats which uses infinitely many colours. The set $\mathrm{ran}(h)$ cannot be mutually $\mathcal F$-independent in this case. It follows that we can find $n, j, l<\omega$ so that $h(n)\in f_j(\alpha_1,\dots, \alpha_m)$ for some hat colours $\alpha_1,\dots,\alpha_m\in h[\{0,\dots, l\}\setminus\{n\}]$ so that logician $n$ guesses correctly when following $\tau^{\max\{j, l\}}$.

We describe one more strategy $\tau_\ast$ which wins against any colouring using only finitely many colours: under $\tau_\ast$, logician $n$ simply lists all the colours on hats of logicians with index ${<}n$.

In any case, one of the countably many strategies $\tau_\ast, (\tau_N)_{N<\omega}$ wins against every colouring, so that the $(\omega,\kappa,\omega)$-hat game is winning by Lemma \ref{lem:countably-many-strategies}.
\end{proof}

The same argument works for any $(\omega,\kappa,\gamma)$-game with $\gamma\geq\omega$ and we get:
\begin{question}[The $(\omega,\kappa,\gamma)$-Game]\label{que:omega-kappa-infinite-game}
Let $\kappa$ and $\gamma\geq\omega$ be cardinals.
Countably infinite many logicians each get a hat coloured in a colour from $\kappa$.
Each logician is allowed to look at finitely many other logicians' hats and has to come up with a list of ${<}\gamma$-many colours.
The logicians win collectively if and only if one of the logicians includes their hat colour in their list of guesses.
For which $\kappa,\gamma$ can the logicians always win?
\end{question}

\begin{answer}
The logicians have a winning strategy if and only if $\kappa$ does not have the $\gamma$-MInA.
\end{answer}

\section{Proof of Theorem \ref{thm:all-games-classification}}\label{sec:proof-of-theorem-a}
In this Section we will proof our main Theorem.
We have already done a lot of steps in proving the main Theorem.
The main challenges left are showing what happens when we increase the number of logicians, and increasing the number of guesses they are allowed to make.

\subsection{Uncountably many logicians}
We will now discuss what happens when the amount of logicians is increased and $\gamma$ is infinite.
A plausible guess is that the losing threshold is related to a version of the $\gamma$-MInA in which the size of the mutually independent set is increased to the amount of logicians.
However, this is not the case and we will prove that a larger number of logicians does not help them win at all.
First we need to be clear what it means to have an uncountable number of logicians.
Recall that in the case $\lambda=\omega$, we look at \emph{continuous} strategies, where the strategy of logician $n$ can be described by a continuous function $\kappa^{\omega\setminus n} \to [\kappa]^{<\gamma}$.
It is fairly easy to see that this can be combinatorically interpreted as the logician looking at the hats of the other logicians one by one, and having to decide after looking at finitely many hats.
It turns out that a similar interpretation holds in the uncountable case.

\begin{proposition}\label{prop:continuous-functions-many-logicians}
Let $\lambda$ be a cardinal, $X$ a discrete space and $f\colon\kappa^\lambda\to X$ a function.
The following are equivalent:
\begin{enumerate}
    \item The function $f$ is continuous.
    \item The function can be described by a procedure that takes $h\in\kappa^\lambda$, evaluates one $h(\alpha)$ at a time and returns $f(h)$ after evaluating finitely many values of $h$.
\end{enumerate}
\end{proposition}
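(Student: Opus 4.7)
The implication $(ii) \Rightarrow (i)$ is immediate: a procedure computing $f(h)$ queries only a finite set $I \subseteq \lambda$ of coordinates before outputting, so any $h'$ agreeing with $h$ on $I$ triggers the identical run and yields $f(h') = f(h)$. Each fibre $f^{-1}(\{x\})$ then contains a basic open neighbourhood of each of its points and is open, making $f$ continuous.

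For $(i) \Rightarrow (ii)$ I would construct a decision tree after fixing a well-ordering $\prec$ of $\lambda$. Call a partial function $p$ with finite domain $I_p$ \emph{decisive} if $f$ is constant on the basic clopen set $\{g : g|_{I_p} = p\}$; continuity ensures every $h$ has a finite decisive restriction. Call $\alpha \in \lambda \setminus I_p$ \emph{sensitive at $p$} if there exist extensions $g_1, g_2$ of $p$ differing only at $\alpha$ with $f(g_1) \neq f(g_2)$. A short interpolation argument --- from any $g_1, g_2$ extending $p$ with different $f$-values, truncate outside a common finite continuity witness and then swap coordinates one at a time until a sensitive one is isolated --- shows that every indecisive $p$ admits at least one sensitive coordinate. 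Set $\beta(p) := \min_\prec\{\alpha : \alpha \text{ sensitive at } p\}$ and run the procedure greedily: start from $p_0 = \emptyset$, and at each indecisive $p_n$ query coordinate $\beta(p_n)$ to form $p_{n+1}$, halting when $p_n$ becomes decisive and returning its constant value.

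To prove termination, suppose for contradiction that some input $h$ yields an infinite chain $p_0 \subsetneq p_1 \subsetneq \cdots$; set $I_\infty := \bigcup_n I_{p_n}$ and fix an inclusion-minimal finite witness $J$ for $h$. If $J \subseteq I_\infty$ then some $I_{p_N} \supseteq J$, making $p_N$ decisive. Otherwise the \emph{insensitivity lemma} --- if $\gamma \in J \setminus I_\infty$ is insensitive at some $p_N$, meaning that changing only $h(\gamma)$ among extensions of $p_N$ never alters $f$, then $(J \setminus \{\gamma\}) \cup I_{p_N}$ is itself a witness for $h$ --- reduces $|J \setminus I_\infty|$, so by induction one may assume every $\gamma \in J \setminus I_\infty$ is sensitive at every $p_n$. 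Letting $\gamma^* := \min_\prec(J \setminus I_\infty)$, sensitivity of $\gamma^*$ at each $p_n$ forces $\beta(p_n) \preceq \gamma^*$, and since $\beta(p_n) \in I_\infty \not\ni \gamma^*$ we obtain $\beta(p_n) \prec \gamma^*$ for every $n$.

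The main obstacle is converting "all queries $\beta(p_n)$ lie strictly $\prec$-below $\gamma^*$" into a contradiction. When $\lambda$ is countable we take $\prec$ of order type $\omega$ so that $\gamma^*$ has only finitely many $\prec$-predecessors, and the infinitely many distinct queries cannot all fit below it. For uncountable $\lambda$ no well-ordering has finite predecessors globally, so the direct argument breaks down; the fix presumably requires either enriching the selection rule for $\beta(p)$ --- for instance, giving priority to coordinates appearing in the minimal witness of a canonical extension like $p$ together with the zero function on $\lambda \setminus I_p$ --- or setting the construction up as a winning strategy for Player $1$ in the natural Gale--Stewart open game (Player $1$ queries a coordinate, Player $2$ returns a colour, Player $1$ wins once the position is decisive) and using continuity to preclude any winning strategy for Player $2$.
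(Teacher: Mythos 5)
Your direction $(ii)\Rightarrow(i)$ is fine, and your decision-tree construction for $(i)\Rightarrow(ii)$ does go through when $\lambda$ is countable. But there is a genuine gap: for uncountable $\lambda$ your termination argument collapses, and you say so yourself (``the fix presumably requires\dots''). This is not a minor loose end --- the uncountable case is the entire point of the proposition; it is invoked precisely to make sense of strategies for $\lambda>\omega$ many logicians in Proposition \ref{prop:amount-of-logicians-irrelevant}. The failure is structural: your contradiction needs ``infinitely many distinct queried coordinates all lie strictly $\prec$-below $\gamma^*$'' to be impossible, which forces $\gamma^*$ to have finitely many $\prec$-predecessors, and no well-order of an uncountable set has that property at every point. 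Neither of your proposed repairs is carried out, and the Gale--Stewart fallback does not obviously help: open determinacy would only tell you that one of the players has a winning strategy, and a play in which Player 1 queries coordinates on which $f$ does not depend can last forever, so showing Player 2 has no winning strategy is essentially the whole problem restated.

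The paper's proof avoids well-orders and sensitive coordinates entirely. Fix for each $g\in\kappa^\lambda$ a finite continuity witness $F_g$. The procedure maintains a finite queried set $S_n$ and a current guess $x_n$; if some $h'$ agreeing with $h$ on $S_n$ has $f(h')\neq x_n$, it picks such a counterexample $h_{n+1}$, sets $x_{n+1}=f(h_{n+1})$, and queries \emph{all} of $F_{h_{n+1}}$ (not a single least coordinate). Termination: if the run on $h$ were infinite, then since $F_h$ is finite the trace $S_n\cap F_h$ stabilises; past that stage one can glue $h_{n+1}\upharpoonright S_{n+1}$ with $h\upharpoonright F_h$ into a single function $h'$, forcing $f(h)=x_{n+1}$, and likewise $f(h)=x_{n+2}$, contradicting $x_{n+1}\neq x_{n+2}$. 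Your parenthetical idea of ``giving priority to coordinates appearing in the minimal witness of a canonical extension of $p$'' is groping toward exactly this --- query the whole witness set of a suitably chosen total extension --- but as written your proof establishes the proposition only for countable $\lambda$.
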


\begin{proof}
The function $f$ is continuous exactly if for all $h\in\kappa^\lambda$ there is a finite subset $F_h\subseteq \lambda$ such that all $h'\in \kappa^\lambda$ with $h'\upharpoonright F_h=h\upharpoonright F_h$ satisfy $f(h')=f(h)$.
Then the direction $(ii)\implies(i)$ is obvious, since we can take $F_h$ to be the finitely many values of $f$ that the procedure has evaluated on.
Now suppose that $f$ is continuous, choose an appropriate $F_h$ for each $h\in\kappa^\lambda$.
We describe a successful procedure that describes $f$:\\
\underline{Step 0:} Let $h_0\in \kappa^\alpha$ be arbitrary and let $x_0 = f(h_0)$.
Let $S_0 = F_{h_0}$ and evaluate $h$ on all elements of $S_0$.\\
\underline{Step $n+1$:} Consider all possible $h'\in\kappa^\alpha$ that agree with $h$ on $S_n$.
If for all these $h'$ we have $f(h') = x_n$, return the output $x_n$.
Otherwise pick some $h_{n+1}$ that agrees with $h$ on $S_n$ and with $f(h_{n+1}) = x_{n+1} \neq x_n$.
Let $S_{n+1} = S_n \cup F_{h_{n+1}}$, evaluate $h$ at all the points of $S_{n+1}$ and go to the next step.

It is clear that if this procedure finishes at some point, then we do so with the correct output $f(h)$.
We must show that this process terminates after finitely many steps.
So suppose that this did not happen with input $h$.
Let $S=\bigcup_{n<\omega}S_n$, the set of indices we have looked at during the above procedure.
If $n$ is sufficiently large then we have 
\[S\cap F_h = S_n \cap F_h.\]
For such $n$, we know that $h_{n+1}$ agrees with $h$ on $S_n$ and that $S_{n+1}\cap F_h = S_n\cap F_h$.
Thus there is a function $h'$ with $h'\upharpoonright S_{n+1} = h_{n+1}$ and $h'\upharpoonright F_h = h$.
Then we have $f(h') = f(h_{n+1}) = x_{n+1}$ and $f(h') = f(h)$, so $f(h) = x_{n+1}$.
But the same argument shows that $f(h) = x_{n+2}$, giving a contradiction.
\end{proof}

Thus now we can say that, even in the case of uncountable many logicians, we consider continuous strategies, and this is equivalent to saying that each logician can look at finitely many other logicians before guessing their own hat.
Now, before proving that adding more logicians does not help them, we need the following Lemma.
\begin{lemma}\label{lem:partly-unwinnable-implies-unwinnable}
Consider an $(\lambda,\kappa,\gamma)$-hat game.
Suppose that for every strategy of the logicians, there is a non-empty subset $Y\subseteq \lambda$ and a function $h\colon Y\to\kappa$, such that if the logicians in $Y$ have their hats coloured by $h$, they only look at other logicians in $Y$ and guess incorrectly.
Then the $(\lambda,\kappa,\gamma)$-hat game is losing.    
\end{lemma}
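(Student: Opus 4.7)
My plan is to prove the contrapositive by showing that, given any strategy $\sigma=\langle f_\alpha\mid\alpha<\lambda\rangle$, one can construct a colouring $h^*\colon\lambda\to\kappa$ under which every logician guesses wrongly. Define $P$ to be the set of pairs $(Z,h)$ with $Z\subseteq\lambda$ and $h\colon Z\to\kappa$ such that for every $\alpha\in Z$ and every extension $g\colon\lambda\setminus\{\alpha\}\to\kappa$ of $h\upharpoonright Z\setminus\{\alpha\}$, the list $f_\alpha(g)$ is independent of the choice of $g$ and avoids $h(\alpha)$. Ordered by extension, $P$ is closed under unions of chains, so Zorn's Lemma yields a maximal element $(Z^*,h^*)$; if I can show $Z^*=\lambda$, then $h^*$ defeats $\sigma$ and we are done.

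Suppose toward a contradiction that $Z^*\subsetneq\lambda$. The goal is to build a proper extension $(Z',h')\in P$. Fix an arbitrary function $\beta\colon Z^*\to\lambda\setminus Z^*$ and define a modified strategy $\tilde\sigma$ by
\[
\tilde f_\alpha(g):=\{g(\beta(\alpha))\}\ \text{for }\alpha\in Z^*,\qquad \tilde f_\alpha(g):=f_\alpha(\hat g)\ \text{for }\alpha\notin Z^*,
\]
where $\hat g$ agrees with $h^*$ on $Z^*$ and with $g$ elsewhere. Both are continuous. Applying the hypothesis to $\tilde\sigma$ yields a non-empty $Y\subseteq\lambda$ and $h\colon Y\to\kappa$ such that, under hats $h$ on $Y$, the logicians in $Y$ (using $\tilde\sigma$) look only at $Y$ and guess wrongly.

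The key claim is that $Y':=Y\setminus Z^*$ is non-empty: for any $\alpha\in Y\cap Z^*$, the guess $\tilde f_\alpha$ depends on coordinate $\beta(\alpha)\in\lambda\setminus Z^*$, and ``looking only at $Y$'' forces $\beta(\alpha)\in Y$, hence $\beta(\alpha)\in Y'$; otherwise $Y\cap Z^*=\emptyset$ and $Y'=Y\neq\emptyset$ directly. Set $Z':=Z^*\cup Y'$ and $h':=h^*\cup(h\upharpoonright Y')$; I claim $(Z',h')\in P$. For $\alpha\in Z^*$ the condition is inherited from $(Z^*,h^*)$. For $\alpha\in Y'$, the function $\tilde f_\alpha$ ignores $Z^*$-coordinates by construction, and by the hypothesis is constant on the slice $\{g:g\upharpoonright Y=h\}$; combining the two (one may freely modify $g$ on $Z^*$ to match $h$ on $Y\cap Z^*$ without changing $\tilde f_\alpha(g)$), $\tilde f_\alpha$ is constant on the larger slice $\{g:g\upharpoonright Y'=h\upharpoonright Y'\}$, with value avoiding $h(\alpha)$. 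For any $g$ extending $h'$ on $Z'\setminus\{\alpha\}$ we have $\hat g=g$ (since $g\upharpoonright Z^*=h^*$), so $f_\alpha(g)=\tilde f_\alpha(g)$ equals that fixed value. Hence $(Z',h')$ strictly extends $(Z^*,h^*)$ in $P$, contradicting maximality.

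The main technical obstacle is ensuring the hypothesis applied to $\tilde\sigma$ cannot return a ``useless'' $Y\subseteq Z^*$; a constant dummy for $\alpha\in Z^*$ would permit this, since the hypothesis could then simply recolour $Z^*$. The fix is to wire each $\alpha\in Z^*$'s dummy guess through $\beta$ into a fresh coordinate $\beta(\alpha)\notin Z^*$, forcing any such $Y$ to absorb at least one logician outside $Z^*$ and thereby providing the material needed to extend $(Z^*,h^*)$.
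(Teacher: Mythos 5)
Your proof is correct and is essentially the paper's argument repackaged: the paper builds the defeating colouring by transfinite recursion, at each stage extending a partial colouring by applying the hypothesis to the residual game on the not-yet-coloured logicians, which is exactly your Zorn's-lemma extension step. The one point where you are more explicit is the modified strategy $\tilde\sigma$ with the wiring map $\beta\colon Z^*\to\lambda\setminus Z^*$, which supplies the justification for the paper's unproved remark that the condition ``immediately holds'' for sub-games by forcing the returned set $Y$ to meet $\lambda\setminus Z^*$.
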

\begin{proof}
    Note that the condition immediately holds for $(L,\kappa,\gamma)$-hat games for all non-empty subsets $L \subseteq \lambda$ as well.
    Fix a strategy for the logicians.
    We construct a hat function recursively for which the logicians will lose the game.
    For ordinals $\alpha$, define $Y_\alpha\subseteq \lambda$ and $h_\alpha\colon Y_\alpha\to\kappa$ as follows: if $\alpha$ is a limit cardinal, put $Y_\alpha=\bigcup_{\alpha'<\alpha}Y_{\alpha'}$ and $h_\alpha = \bigcup_{\alpha'<\alpha}h_{\alpha'}$.
    Consider a successor ordinal $\alpha+1$ and suppose $Y_\alpha \neq\lambda$.
    Fixing the hat colours of the logicians in $Y_\alpha$, the logicians in $\lambda\setminus Y_\alpha$ have a strategy for the $(\lambda\setminus Y_\alpha, \kappa,\gamma)$-hat game.
    By the condition, there is a non-empty subset $Y'\subseteq \lambda\setminus Y_\alpha$ and a function $h'\colon Y'\to\kappa$, such that given these hats, the logicians in $Y'$ only look at the hats in $Y_\alpha\cup Y'$ and guess incorrectly.
    Then put $Y_{\alpha+1} = Y_\alpha\cup Y'$ and $h_{\alpha+1} = h_\alpha\cup h'$.

    Now there must be some $\alpha$ with $Y_\alpha = \lambda$, and with the hat function $h = h_\alpha$, all logicians guess incorrectly.
\end{proof}

Strategies for the $(\lambda,\kappa,\gamma)$-hat game are simply sequences $\langle f_\alpha\mid\alpha<\lambda\rangle$ of continuous functions $f_\alpha\colon\kappa^{\lambda\setminus\{\alpha\}}\rightarrow[\kappa]^{<\gamma}$.
In the proof of Proposition \ref{prop:amount-of-logicians-irrelevant}, we want to make use of how the game would play out ``in practice": a given logician starts by looking at some other logician, at which point their hat colour is revealed.
Now, taking this information into account, they choose to look at another logician's hat, etc.~until they decide to stop and guess after finitely many steps. 
Indeed, this procedure defines the same class of strategies by Proposition \ref{prop:continuous-functions-many-logicians}.

\begin{proposition}\label{prop:amount-of-logicians-irrelevant}
    Suppose $\lambda$ and $\gamma$ are infinite cardinals and the $(\lambda,\kappa,\gamma)$-hat game is winning.
    Then the $(\omega,\kappa,\gamma)$-hat game is winning.
\end{proposition}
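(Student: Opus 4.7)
My plan is to prove the contrapositive: assume that the $(\omega,\kappa,\gamma)$-hat game is losing and deduce that the $(\lambda,\kappa,\gamma)$-hat game is losing. By Lemma~\ref{lem:partly-unwinnable-implies-unwinnable}, it suffices to show that for every strategy $\sigma=\langle f_\alpha\mid\alpha<\lambda\rangle$ for the $\lambda$-game there is a non-empty $Y\subseteq\lambda$ and $h\colon Y\to\kappa$ such that, with the logicians in $Y$ coloured by $h$, they all query only within $Y$ and all guess incorrectly. I will produce such a pair with $Y$ countable, which puts us in the range where the $(\omega,\kappa,\gamma)$-losing hypothesis is directly applicable.

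Fix $\sigma$ and a reference hat function $\bar h\colon\lambda\to\kappa$. Using the procedural interpretation from Proposition~\ref{prop:continuous-functions-many-logicians}, I would construct $(Y,h)$ in $\omega$ stages: an increasing chain $Y_0\subseteq Y_1\subseteq\ldots$ of countable subsets of $\lambda$ together with a coherent extension sequence $h_0\subseteq h_1\subseteq\ldots$ of partial hat functions $h_n\colon Y_n\to\kappa$. Starting from $Y_0=\{\alpha_0\}$ for an arbitrary $\alpha_0$, at each stage I would simulate each logician $\alpha\in Y_n$ procedurally under the hat function $h_n\cup\bar h|_{\lambda\setminus Y_n}$; since the simulation queries only finitely many positions before outputting a guess, the set $Y_{n+1}$ obtained by adjoining all queried positions remains countable. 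Then I would extend $h_n$ to $h_{n+1}$ on $Y_{n+1}\setminus Y_n$ by invoking the $(\omega,\kappa,\gamma)$-losing hypothesis on the sub-game whose strategies are obtained from $\sigma$ by fixing hats on $Y_n$ to $h_n$ and hats outside $Y_{n+1}$ to $\bar h$, thereby forcing the new logicians in $Y_{n+1}\setminus Y_n$ to guess incorrectly as well. In the limit, $Y=\bigcup_n Y_n$ is countable and $h=\bigcup_n h_n$ is a well-defined hat function on $Y$ which should witness the hypothesis of Lemma~\ref{lem:partly-unwinnable-implies-unwinnable}.

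The hard part will be maintaining consistency across stages: once $h_n$ is extended by picking values on $Y_{n+1}\setminus Y_n$ that differ from $\bar h$, the responses received by logicians in $Y_n$ when they query those positions change, which can alter their trajectories so that they end up querying outside $Y_{n+1}$ or, worse, guessing correctly. Overcoming this obstruction requires choosing $h_{n+1}$ carefully --- for instance, insisting that $h_{n+1}$ agrees with $\bar h$ precisely on those positions in $Y_{n+1}\setminus Y_n$ that are actually queried by the logicians in $Y_n$, while still forcing the remaining new logicians to lose via a suitably restricted application of the losing hypothesis. This simultaneous control of the closure and loss conditions, together with the bookkeeping of query trajectories it demands, is the technical heart of the proof; once it is resolved, passing to the limit and appealing to Lemma~\ref{lem:partly-unwinnable-implies-unwinnable} yields that the $(\lambda,\kappa,\gamma)$-hat game is losing.
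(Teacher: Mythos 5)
Your global strategy --- proving the contrapositive, aiming for a countable set $Y$ and a colouring $h$ on $Y$ that satisfies the hypothesis of Lemma~\ref{lem:partly-unwinnable-implies-unwinnable}, and using the procedural reading of continuity from Proposition~\ref{prop:continuous-functions-many-logicians} --- is exactly the paper's. But the technical heart you defer is not a detail; it is the whole proof, and the resolution you sketch does not work. The circularity you identify is real and your proposed fix collapses into it: if $h_{n+1}$ must agree with $\bar h$ on every position in $Y_{n+1}\setminus Y_n$ actually queried by logicians in $Y_n$ (to preserve their trajectories and their incorrect guesses), then those very logicians receive the colour $\bar h$ prescribes, and nothing forces \emph{them} to guess incorrectly --- yet they are members of $Y$ and Lemma~\ref{lem:partly-unwinnable-implies-unwinnable} requires \emph{all} of $Y$ to lose. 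You cannot both freeze their colours at $\bar h$ and choose their colours adversarially. A second, independent problem is that the losing hypothesis for the $(\omega,\kappa,\gamma)$-game, used as a black box, only hands you \emph{some} bad colouring for a given countable strategy profile, with no control over which colours it uses; so it cannot be threaded through side constraints of the form ``agree with $\bar h$ here'' or ``do not perturb the queries computed at an earlier stage.''

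The paper escapes this by not iterating the losing hypothesis at all. It converts it once into its combinatorial equivalent, the $\gamma$-MInA, and makes two moves you are missing. First, it records each logician's behaviour as functions of the \emph{sequence of colours seen} rather than of which logicians were looked at: $f_\alpha(h_0)$ is the guess after seeing colours $h_0(0),\dots,h_0(n-1)$, and $\mathrm{next}_\alpha(h_0)$ is the index looked at next. Second, it extracts a single countably infinite set $A\subseteq\kappa$ that is \emph{mutually} independent for the countable family $\mathcal F$ of all compositions $f_0^{n_0,\dots,n_m,n}$ of these functions, defines $Y$ as the closure of $\{0\}$ under the $\mathrm{next}$-functions applied to tuples from $A$ (countable, and computed \emph{before} any colours are assigned), and then colours $Y$ by an arbitrary injection into $A$. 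Because every colour any logician in $Y$ can ever see lies in $A$, the closure condition holds by construction of $Y$ with no stage-by-stage bookkeeping; and because a logician's own colour is an element of $A$ distinct from all the colours plugged into the functions, mutual independence kills every guess simultaneously. This one-shot use of a mutually independent colour set is the idea your proposal lacks, and without it (or a genuine substitute) the argument does not close.
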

\begin{proof}

 Suppose for a contradiction that the $(\omega,\kappa,\gamma)$-hat game is losing, so we know that $\kappa$ has the $\gamma$-MInA.
Fix a winning strategy for the $(\lambda, \kappa,\gamma)$-hat game and let us assume from now on that all logicians follow this strategy.
For $h\in \kappa^n$, let $f_\alpha(h)$ be the guess that logician $\alpha$ submits when the first hat that they look at has colour $h(0)$, the second hat they look at has colour $h(1)$, etc.~until they decide to make the guess; if they do not make a guess after seeing $n$ hats, we leave $f_\alpha(h)$ undefined.
For this definition, it does not matter at which logicians $\alpha$ looks at, only the colours that they see.
For $\alpha<\lambda$ and $h_0\in \kappa^n$, let $\mathrm{next}_\alpha( h_0)$ denote, if defined, the index of the logician that logician $\alpha$ wants to look at next after seeing the hat colours $h_0(0),\dots, h_0(n-1)$.
Define 
\[\mathrm{next}_\alpha(h_0,\dots, h_{n+1})=\mathrm{next}_{\mathrm{next}_\alpha(h_0,\dots,  h_n)}(h_{n+1})\]
by recursion on $n$ for $h_0,\dots, h_{n+1}\in \kappa^{<\omega}$ (whenever the right hand side is defined). 
Now, for $\alpha<\lambda$ and $n_0,\dots n_m\in\omega$, let 
\[f_\alpha^{n_0,\dots, n_m, n}\colon \kappa^{n_0}\times\dots\times\kappa^{n_m}\times\kappa^n\rightarrow [\kappa]^{<\gamma}\]
be given by 
\[(h_0,\dots, h_m, h)\mapsto f_{\mathrm{next}_\alpha(h_0,\dots, h_m)}(h)\]
when the right-hand side is defined, and the empty set otherwise.
We identify the domain of $f_\alpha^{n_0,\dots, n_m, n}$ with $\kappa^{n_0+\dots+n_m+n}$.
Set 
\[\mathcal F=\bigcup_{m<\omega}\{f_0^{n_0,\dots, n_m, n}\mid n_0,\dots, n_m, n<\omega\}\]
and let $A$ be a countably infinite mutually $\mathcal F$-independent set.
Let 
\[Y=\{0\}\cup\bigcup_{m<\omega}\{\mathrm{next}_0(h_0,\dots, h_{m})\mid h_0,\dots, h_m\in A^{<\omega}\}\]
and note that $Y$ is countable so that there is an injection $h\colon Y\rightarrow A$.
Now let $h^+$ be any colouring extending $h$.
By definition of $Y$, if any logician $\alpha\in Y$ sees only colours in $A$ then they will only ever look at other logicians in $Y$.
As the hats of logicians in $Y$ have colours in $A$, this shows that logicians in $Y$ only look at other logicians in $Y$ when confronted with $h^+$.
We will now prove that all logicians in $Y$ guess incorrectly.
It is easier to see that $0$ guesses incorrectly, so consider a logician
\[\alpha=\mathrm{next}_0(h_0,\dots, h_m)\in Y\setminus\{0\}\]
for some $h_0,\dots, h_m\in A^{<\omega}$.
We may assume that $m$ is as small as possible and the length of $h_m$ is as small as possible.
It follows that $h(\alpha)$ does not appear in the $h_0,\dots, h_m$. 
We know that logician $\alpha$ guesses 
\[f_0^{n_0,\dots, n_m, n}(h_0,\dots, h_m, h')\]
where $n_i=\mathrm{length}(h_i)$ for $i<m$ and $n$ is the amount of logicians that $\alpha$ looks at, and for $k<n$, $h'(k)$ is the colour of the $k$-th hat that logician $\alpha$ chooses to look at.
In particular we have $\mathrm{ran}(h')\subseteq A\setminus\{h(\alpha)\}$.
It follows that logician $\alpha$ does not guess correctly by our choice of $A$.\\
 By Lemma \ref{lem:partly-unwinnable-implies-unwinnable}, this implies that the $(\lambda,\kappa,\gamma)$-hat game is losing, a contradiction. So the $(\omega,\kappa,\gamma)$-hat game is winning.
\end{proof}

\begin{remark}
We do not know of a more direct proof of the above proposition, i.e.~a proof which somehow transforms a winning strategy of a $(\lambda,\kappa,\gamma)$-game into one for the $(\omega,\kappa,\gamma)$-game.
\end{remark}

\subsection{More guesses}

\begin{lemma}\label{lem:combining-strategies-of-different-cardinalities}
    Suppose the games $(\omega,\kappa,\delta^+)$ and $(\omega,\delta,\gamma)$ are winnable, with $\delta,\gamma\geq\omega$.
    Then the game $(\omega,\kappa,\gamma)$ is winnable as well.
\end{lemma}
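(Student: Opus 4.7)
The plan is to split on the size of $\gamma$ relative to $\delta^+$. If $\gamma \geq \delta^+$, a winning strategy for $(\omega,\kappa,\delta^+)$ already produces guess-lists of size ${<}\delta^+\leq \gamma$ and hence is also a winning strategy for $(\omega,\kappa,\gamma)$, so we are done. So assume $\gamma \leq \delta$. Fix winning strategies $\sigma_1$ for $(\omega,\kappa,\delta^+)$ and $\sigma_2$ for $(\omega,\delta,\gamma)$. The idea is to compose them: $\sigma_1$ narrows each logician's hat down to a set of size ${\leq}\delta$, and then $\sigma_2$ is used to refine indices within those sets down to size ${<}\gamma$.

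Concretely, running $\sigma_1$ assigns each $x$ a candidate set $S_x\subseteq\kappa$ with $|S_x|\leq\delta$, and at least one $x$ has $h(x)\in S_x$. Enumerate each $S_x$ canonically as $\{a^x_\beta:\beta<|S_x|\}$. For each hat function $h$ define $\iota\colon\omega\to\delta$ by setting $\iota(y)$ to be the index of $h(y)$ in $S_y$ whenever $h(y)\in S_y$, and to a carefully chosen default otherwise. Applying $\sigma_2$ to $\iota$ yields, for each $x$, a set $T_x\subseteq\delta$ with $|T_x|<\gamma$. The logician $x$ then outputs $\{a^x_t : t\in T_x,\ t<|S_x|\}$, a set of size ${<}\gamma$. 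This guess contains $h(x)$ precisely when $x$ is simultaneously a $\sigma_1$-winner (so that $\iota(x)$ is the true index of $h(x)$ in $S_x$) and a $\sigma_2$-winner (so that $\iota(x)\in T_x$).

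Two technical obstacles must be overcome. The first is a feasibility issue: to compute $\iota(y)$ for $y\neq x$, $x$ must simulate $\sigma_1$ for $y$, whose continuous peek-set might include $x$'s own hat. I would resolve this by partitioning $\omega=G_0\sqcup G_1$ into two countably infinite pieces with a bijection $\phi\colon G_0\to G_1$, applying $\sigma_1$ separately on each $G_i$ so that a logician in $G_{1-i}$ can freely simulate $\sigma_1$ for any logician in $G_i$ without self-reference. The encoded hats $\iota$ on each side can then be fed into $\sigma_2$ via the proxy $\phi$.

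The second and main obstacle is that the $\sigma_1$-winner and $\sigma_2$-winner need not coincide: $\sigma_2$ is guaranteed to catch some $x$ with $\iota(x)\in T_x$, but this $x$ might be a non-$\sigma_1$-winner for whom $\iota(x)$ is the default value rather than a real index. The plan is to engineer the default values on non-$\sigma_1$-winners so that $\iota(y)\notin T_y(\iota)$ for every non-winner $y$, forcing every $\sigma_2$-catch to be a genuine $\sigma_1$-winner. Existence of such a self-consistent $\iota$ should follow because $\sigma_2$ is continuous (so each $T_y(\iota)$ depends on only finitely many coordinates of $\iota$) and $\gamma\leq\delta$ guarantees $|\delta\setminus T_y|=\delta$, so there is always ample room to pick $\iota(y)$ outside $T_y$; this can be made rigorous either by a direct fixed-point construction or, more pragmatically, by invoking Lemma~\ref{lem:countably-many-strategies} with countably many variant strategies $\tau^N$ corresponding to different default encodings and arguing that some $\tau^N$ succeeds for every hat function. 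This matching step is where I expect the real combinatorial work of the proof to lie.
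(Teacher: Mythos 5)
Your reduction to the case $\gamma\leq\delta$ and the overall composition scheme are sound, and you have correctly located the crux in the mismatch between $\sigma_1$-winners and $\sigma_2$-winners. But your proposed resolution of that crux --- engineering the default values $\iota(y)$ on the non-$\sigma_1$-winners so that $\iota(y)\notin T_y(\iota)$ for every non-winner $y$ --- cannot work, and not merely for lack of detail. You are asking for a colouring of the set $N$ of non-winners against which the strategy $\sigma_2$, restricted to $N$ (with the winners' coordinates frozen), loses everywhere on $N$. Since $(\omega,\delta,\gamma)$ is winnable, a winning strategy restricted to an infinite subset of the players can perfectly well remain winning there: already for $\delta=\gamma=\omega$, the strategy in which each logician lists all colours up to and including the colour of the next logician of $N$ that they look at catches someone in $N$ for \emph{every} assignment. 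So ``ample room to pick $\iota(y)$ outside $T_y$'' coordinate by coordinate does not yield a simultaneous dodge; the obstruction is global rather than a counting issue, and no family of countably many ``default encodings'' fed into Lemma~\ref{lem:countably-many-strategies} repairs it, because for such a $\sigma_2$ \emph{no} default assignment on $N$ works.

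The correct move is the opposite one: do not run $\sigma_2$ on all of $\omega$, but only on the set $W$ of $\sigma_1$-winners, re-indexed by its increasing enumeration. One first arranges (by splitting the players into infinitely many groups) that $W$ is infinite for every hat function; then $\sigma_2$ applied to the genuine indices of the winners is guaranteed to catch a winner, which is exactly what is needed. The price is that no logician can compute $W$, nor even decide whether another logician is a winner (simulating $\sigma_1$ for $y$ may require looking at one's own hat); this is handled by quantifying over finite guesses $E$ for an initial segment of $W$, producing countably many strategies indexed by such $E$ and invoking Lemma~\ref{lem:countably-many-strategies}, together with nontrivial bookkeeping to make the candidate sets that different logicians compute for one another agree where it matters. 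The paper in fact sidesteps this entire difficulty: it proves the analogous combining lemma for the \emph{well-ordered} game (Lemma~\ref{lem:combining-strategies-FINA-case}), where each logician can fully determine what every logician in front of them will guess, and then transfers the result back through the equivalence of the $\gamma$-FInA with the $\gamma$-MInA (Lemma~\ref{lem:FInA-MInA-equivalence}) and the answer to Question~\ref{que:wo-omega-kappa-infinite-game}.
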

While it is possible to prove this Lemma now, it will be much easier to do this after we have introduced the FInA in Section \ref{sec:hat-games-on-partial-orders}.
Therefore, we postpone the proof for now.
The proof is on page \pageref{proof:combining-strategies-of-different-cardinalities}.
We use this Lemma in the remainder of this chapter, but we do not use it in the next Section before proving it; thus the reader may convince themselves that our arguments are non-circular.

We say $\kappa$ is \emph{strongly losing} if for all $\gamma<\kappa$ the hat game $(\omega,\kappa,\gamma)$ is losing.
The following Proposition follows directly from Lemma \ref{lem:combining-strategies-of-different-cardinalities}.

\begin{proposition}\label{prop:least-strongly-losing}
Let $\gamma\geq\omega$ be a cardinal.
The least $\kappa$ for which the hat game $(\omega,\kappa,\gamma)$ is losing equals the least strongly losing cardinal above $\gamma$.
\end{proposition}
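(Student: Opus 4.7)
The plan is to prove the two inequalities separately. Let $\kappa_1$ denote the least $\kappa$ for which $(\omega,\kappa,\gamma)$ is losing, and let $\kappa_2$ denote the least strongly losing cardinal above $\gamma$.

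The inequality $\kappa_1\leq\kappa_2$ is immediate from the definitions: since $\gamma<\kappa_2$ and $\kappa_2$ is strongly losing, the game $(\omega,\kappa_2,\gamma)$ is losing, so $\kappa_2$ is a candidate for the minimum defining $\kappa_1$.

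For $\kappa_2\leq\kappa_1$ I need to verify that $\kappa_1$ is itself strongly losing and satisfies $\kappa_1>\gamma$. For the latter, the single function $f\colon\gamma\to[\gamma]^{<\gamma}$ given by $\alpha\mapsto\alpha$ witnesses that $\gamma$ fails the $\gamma$-MInA: any two distinct $\alpha<\beta$ in $\gamma$ satisfy $\alpha\in\beta=f(\beta)$, so no infinite mutually $\{f\}$-independent set exists. By the characterisation stated as the answer to Question \ref{que:omega-kappa-infinite-game} (the generalisation of Answer \ref{answer:omega-logicians-finitely-many-guesses}), the game $(\omega,\gamma,\gamma)$ is winnable, and since raising $\kappa$ only makes the game harder (any strategy for a larger $\kappa$-game restricts to one for a smaller $\kappa$-game), we conclude $\kappa_1>\gamma$.

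To see that $\kappa_1$ is strongly losing I must show $(\omega,\kappa_1,\delta)$ is losing for every $\delta<\kappa_1$. If $\delta\leq\gamma$, then $<\delta$ guesses is more restrictive than $<\gamma$ guesses, so any winning strategy for $(\omega,\kappa_1,\delta)$ would also be winning for $(\omega,\kappa_1,\gamma)$, contradicting the choice of $\kappa_1$. The remaining case is $\gamma<\delta<\kappa_1$, which is precisely the content of Lemma \ref{lem:combining-strategies-of-different-cardinalities}: supposing $(\omega,\kappa_1,\delta)$ is winnable, the game $(\omega,\kappa_1,\delta^+)$ is winnable as well (allowing more guesses can only help), while $(\omega,\delta,\gamma)$ is winnable by the minimality of $\kappa_1$; applying the Lemma to these two facts yields $(\omega,\kappa_1,\gamma)$ winnable, contradicting the definition of $\kappa_1$.

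The entire argument is bookkeeping around the monotonicity of winnability in the two parameters $\kappa$ and $\gamma$ once Lemma \ref{lem:combining-strategies-of-different-cardinalities} is granted; the only step I would flag is verifying $\kappa_1>\gamma$, since it is the one place where an explicit non-MInA witness for $\gamma$ itself must be produced rather than just reshuffled from the hypothesis.
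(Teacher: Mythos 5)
Your proof is correct and follows essentially the same route as the paper: both reduce the statement to showing that the least losing $\kappa$ exceeds $\gamma$ and is strongly losing, with Lemma \ref{lem:combining-strategies-of-different-cardinalities} doing the real work. You are in fact slightly more careful than the paper at two points it glosses over --- the case $\delta\leq\gamma$ (handled by monotonicity in the number of guesses) and the upgrade from $(\omega,\kappa_1,\delta)$ winnable to $(\omega,\kappa_1,\delta^+)$ winnable before invoking the Lemma --- and you verify $\kappa_1>\gamma$ via an explicit non-MInA witness for $\gamma$ where the paper just exhibits the two-logician strategy directly; these are equivalent.
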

\begin{proof}
    Clearly $\kappa\geq \gamma$.
    In fact, $\kappa>\gamma$, otherwise the first two logicians can simply look at each other's hats, and write down all hat colours up to and including the other player's hat's.
    By definition, any strongly losing cardinal larger than $\gamma$ also loses the hat game $(\omega,\kappa,\gamma)$.
    So let $\kappa$ be the least cardinal losing the game $(\omega,\kappa,\gamma)$; we will prove that $\kappa$ is strongly losing and then we will be done.
    Suppose that $\kappa$ is not strongly losing.
    Then there is $\delta<\kappa$ such that $(\omega,\kappa,\delta)$ is winning.
    By minimality of $\kappa$, the hat game $(\omega,\delta,\gamma)$ is also winning.
    By Lemma \ref{lem:combining-strategies-of-different-cardinalities}, the hat game $(\omega,\kappa,\gamma)$ is winning, contradiction.
\end{proof}
We can express this in terms of the MInA.
Recall that a cardinal $\kappa$ is strongly MInA just in case that $\kappa$ is a $\gamma$-MInA cardinal for all $\gamma<\kappa$.
As an immediate consequence of the answer to Question \ref{que:omega-kappa-infinite-game} we get:

\begin{corollary}\label{cor:stongly-losing-is-strongly-mina}
A cardinal $\kappa$ is strongly losing if and only if $\kappa$ is strongly MInA.
\end{corollary}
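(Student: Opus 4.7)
The plan is to observe that the corollary is essentially a one-step unwinding of the equivalence established as the answer to Question \ref{que:omega-kappa-infinite-game}: for every infinite cardinal $\gamma$, the $(\omega,\kappa,\gamma)$-hat game is losing if and only if $\kappa$ has the $\gamma$-MInA.

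For the forward direction I would assume $\kappa$ is strongly losing and restrict the defining quantifier to the infinite $\gamma<\kappa$. For each such $\gamma$, the $(\omega,\kappa,\gamma)$-game is losing by assumption, and the cited equivalence yields that $\kappa$ has the $\gamma$-MInA. This is exactly the definition of strongly MInA.

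For the reverse direction I would assume $\kappa$ is strongly MInA and split according to whether $\gamma<\kappa$ is infinite or finite. When $\gamma$ is infinite, the same equivalence run backwards gives that $(\omega,\kappa,\gamma)$ is losing. When $\gamma\geq 2$ is finite, I would first observe that $\kappa$ must be uncountable (the cardinal $\omega$ fails the $\omega$-MInA, e.g.\ via the single function $f(n)=\{n,n+1,\dots\}$, which leaves no infinite $A\subseteq\omega$ mutually $\{f\}$-independent), and then use monotonicity: any winning strategy with guess-lists of size $<\gamma<\omega$ is \emph{a fortiori} a winning strategy with finite guess-lists, so that $(\omega,\kappa,\omega)$ being losing (already known from the infinite-$\gamma$ case) forces $(\omega,\kappa,\gamma)$ to be losing as well.

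There is no genuine obstacle here; the only detail beyond a direct invocation of the answer to Question \ref{que:omega-kappa-infinite-game} is the finite-$\gamma$ patch in the reverse direction, which is the one-line monotonicity remark above.
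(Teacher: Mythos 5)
Your proposal is correct and takes essentially the same route as the paper, which presents the corollary as an immediate consequence of the answer to Question \ref{que:omega-kappa-infinite-game}; your additional care with finite $\gamma$ (monotonicity in the number of guesses, reducing to the already-losing $(\omega,\kappa,\omega)$-game) is a sound patch for a case the paper leaves implicit. One tiny slip: $f(n)=\{n,n+1,\dots\}$ takes infinite values and so is not an admissible function for the $\omega$-MInA --- use $f(n)=\{0,\dots,n\}$ instead --- and note that for $\kappa\le\omega$ the strongly MInA condition is vacuous, so the uncountability of $\kappa$ is really a standing convention shared with the paper rather than something derivable from the hypothesis.
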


This is the fact that we have hinted at earlier: a cardinal has the $\omega$-MInA iff it has the $\gamma$-MInA for any/all $\gamma$ below the least $\omega$-MInA cardinal.\medskip

Now we are ready to prove Theorem \ref{thm:all-games-classification}, which we repeat for convenience.
\maintheorem*
\begin{proof}
\begin{enumerate}
    \item The case with $\lambda$ and $\gamma$ finite is similar to Question \ref{que:finite-people-one-guess}.
    \item Suppose $\lambda$ is finite and $\gamma$ is infinite.
    We provide a winning strategy for $\kappa< \gamma^{+(\lambda -1 )}$ by induction on $\lambda$.
    The case $\lambda = 1$ is trivial as the single logician can guess all colours in $\kappa$.
    Suppose that for each ordinal $\mu < \gamma^{+(\lambda-1)}$ we have a strategy $\tau_\mu$ for $\lambda$ players and hat colours in $\mu$.
    We describe a strategy for $\lambda+1$ players and hats in $\kappa < \gamma^{+\lambda}$.
    Each logician $i$ looks at the largest hat colour they can see, say logician $j$ with hat $h(j)$.
    Then the other logicians that logician $i$ can see have hat colours in $h(j)+1$.
    Since $h(j)+1 < \kappa < \gamma^{+\lambda}$ we have $h(j)+1 < \gamma^{+(\lambda-1)}$, logician $i$ can apply strategy $\tau_{h(j)+1}$ on the logicians except logician $j$.
    This gives logician $i$ their list of guesses.
    One of the logicians has the hat with the largest colour, and using this strategy, one of the other logicians has to guess correctly.

    Now we show that for $\kappa \geq \gamma^{+(\lambda-1)}$ there is no winning strategy.
    We again use induction on $\lambda$.
    The case $\lambda = 1$ is clear, since the single logician does not have any information and can not guess all possible colours of their hat.
    So suppose there is no winning strategy for some $\lambda$ and consider a strategy for the $(\lambda+1, \kappa, \gamma)$-hat game, with $\kappa \geq \gamma^{+\lambda}$.
    We can promise the first $\lambda$ players that their hat's colour will be an element of $\gamma^{+(\lambda-1)}$.
    Then there are only $\gamma^{+(\lambda-1)}$ possible hat colourings over the first $\lambda$ players.
    In each case the last player guesses less than $\gamma$ hat colours, so in total, they guess at most $\gamma^{+(\lambda-1)}$ colours.
    Thus we can give the last player a hat colour that they will certainly not guess.
    Now after fixing this hat colour, the first $\lambda$ players are playing a $(\lambda,\gamma^{+(\lambda-1)},\gamma)$-hat game, for which we know there is no winning strategy.
    Thus we can also give them hat colours for which they will all guess incorrectly, so the $(\lambda+1,\kappa,\gamma)$-hat game is losing.    
    \item Now consider the case with $\lambda$ infinite and $\gamma$ finite.
    If $\lambda$ is countable, the proof is similar to that of Question \ref{que:infinitely-many-people-one-guess}.
    In the general case we make use of Lemma \ref{lem:partly-unwinnable-implies-unwinnable}.
    Consider any strategy for the $(\lambda,\omega,\gamma)$-hat game.
    Now we construct sets $X_n = \{x_0,x_1,\ldots,x_{m_n}\}$ recursively.
    Let $X_0=\{x_0\}$ where $x_0$ is any logician.
    If $X_n$ has been defined, consider all hat colourings where the hat colour of logician $x_i$ is smaller than $2^{i+2}\gamma$ for all $i\leq m_n$.
    For each logician $x_i$ with $i\leq m_n$ and each such hat colouring, add the first logician that $x_i$ looks at that is outside of $X_n$ to $X_{n+1}$.
    
    Let $Y$ be the union of all $X_n$.
    Then, if for all $i$ the hat of logician $x_i$ is smaller than $2^{i+2}\gamma$, all logicians of $Y$ look only at other logicians of $Y$.
    Now give all logicians in $Y$ a random hat colour, where the colour of the hat of logician $x_i$ is drawn uniformly from $\{0,\ldots,2^{i+2}\gamma-1\}$ and the hat colours are mutually independent.
    Since each logician always looks at only finitely many other hats, the probability that logician $x_i$ guesses correctly is well-defined.
    Since the guess of logician $x_i$ does not depend on their own hat, and their own hat's colour is uniformly chosen from $2^{i+2}\gamma$ elements, the probability that they are correct is at most $\frac1{2^{i+2}}$.
    Then the probability that one of them guesses correctly is at most $\frac12$.
    So there is some colouring of hats on the logicians in $Y$ such that the logicians in $Y$ only look at each other and all of them guess incorrectly.
    By Lemma \ref{lem:partly-unwinnable-implies-unwinnable}, the $(\lambda,\omega,\gamma)$-hat game is losing.
    \item Finally, suppose that $\lambda$ and $\gamma$ are infinite.
    By Proposition \ref{prop:amount-of-logicians-irrelevant}, the $(\lambda,\kappa,\gamma)$-hat game is losing if and only if the $(\omega,\kappa,\gamma)$-hat game is losing.
    By the answer to Question \ref{que:omega-kappa-infinite-game}, this happens if and only if $\kappa$ has the $\gamma$-MInA.
    By Proposition \ref{prop:least-strongly-losing} and Corollary \ref{cor:stongly-losing-is-strongly-mina}, this holds if and only if $\kappa$ is at least the least strongly MInA cardinal above $\gamma$.
\end{enumerate}
\end{proof}

\section{Hat Games with Restrictions}\label{sec:hat-games-on-partial-orders}

In the literature about hat problems, there are often restrictions places on which other logicians a logician is allowed to look at.
For example, there can be infinitely many logicians standing in a line, where each logician may only see the logicians in front of them.
The situation can be quite complicated, as is shown for example in \cite[Section 4.4]{HarTay13}.
There we consider the situation where there are infinitely many logicians indexed by $\omega$, where odd-numbered logicians can see all even-numbered logicians in front of them, and even-numbered logicians can see all odd-numbered logicians in front of them.
The answer to the hat puzzle, where the logicians can look at infinitely many hats, is complicated and independent of ZFC.
For us it is much simpler.
\begin{question}
    Consider $\omega$ many logicians, each wearing hat with a colour in $\kappa$.
    Suppose that each logician can look at finitely many hats, and then write down a finite list of guesses for their own hat.
    Suppose furthermore that logicians can only look at logicians whose index is higher and of a different parity than their own.
    The logicians win if at least one of them guesses correctly.
    For which $\kappa$ do they have a winning strategy?
\end{question}
\begin{answer}
    The logicians can win if $\kappa \leq \omega$.
\end{answer}
\begin{proof}
    Suppose $\kappa=\omega$.
    Logician $n$ looks at the next logician, and guesses all hat colours up to and including the hat colour of logician $n+1$.
    Since there is no infinite decreasing sequence in $\omega$, one of the logicians has to guess correctly.

    Suppose $\kappa=\omega_1$ and suppose for a contradiction that the logicians have a winning strategy.
    We can promise the logicians that all even logicians will get a colour in $\omega$.
    Now the guesses of the odd logicians depend only on the hat colours of the even logicians, and since there are $\omega$ many finite sequences of elements in $\omega$, we know that each odd logician can only make countably many guesses.
    So we can give each odd logician a hat that they will never guess.
    After that, the guesses of the logicians are fixed, so we can give them a hat colour in $\omega$ that they will never guess.
    So all logicians guess wrong, a contradiction.
\end{proof}

In general, analysing the hat problem for each directed graph is complicated, even in the finite case.
If we consider any finite directed graph $G$, we can consider the hat game where the logicians are vertices of $G$, and they can see another logician if there is an arrow from them to that logician.
In \cite{AloBenShaTam20}, the finite case is considered, where each logician can make 1 guess.
Then the game is winnable up to some finite number of colours.
If we allow the logicians to make finitely many guesses, the game will be winnable whenever the amount of colours is at most $\omega_n$, for some $n$.
It would be interesting, but out of scope for this paper, to see how $n$ depends on the directed graph.

For the remainder of this Section we will consider the \emph{transitive} case, where if logician $a$ can see logician $b$ and logician $b$ can see logician $c$, then logician $a$ can see logician $c$.
Since no logician can see themselves this implies in particular that there are no cycles.
Thus the visibility restrictions can be viewed as a partial order.
We will now analyse for which partial orders $\mathbb P$ and cardinals $\kappa,\gamma$ the $(\mathbb P,\kappa,\gamma)$-hat game is winning.
Recall that in this game, there is one logician for every $p\in\mathbb P$. 
In the game, logician $p$ is allowed to look at finitely many other hats, but only at those which belong to a logician $q$ with $p<_{\mathbb P}q$.
As before, the hat colours are drawn from $\kappa$ and each logician is allowed to guess ${<}\gamma$-many colours.
The win condition is as before: at least one logician needs to guess correctly.

In the standard $(\lambda,\kappa,\gamma)$-hat game, the existence of a winning strategy depended only very little on $\lambda$.
A similar situation arises here, too.
In fact, there is really only one partial order we have to look at explicitly to solve the whole problem.

\subsection{Well-ordered hat games and FInA cardinals}

In the original formulation by Gabbay-O'Connor of the hat problem in Question \ref{que:infinitely-many-logicians-not-continuous}, the $\omega$-many logicians are standing in a line ordered by their index and can only see the logicians in front of them.
This corresponds to the restriction of playing on the partial order $(\omega,\leq)$.
We will now discuss the $((\omega,\leq),\kappa,\gamma)$-hat games in the case that $\gamma$ is infinite.
We will call this game the \textit{well-ordered} $(\omega,\kappa,\gamma)$-hat game.

\begin{question}[The well-ordered $(\omega,\kappa,\gamma)$-Game]\label{que:wo-omega-kappa-infinite-game}
Suppose $\gamma\geq\omega$. Countably many logicians are standing in a line of length $\omega$, the logicians cannot see logicians behind them. Every logician wears a hat coloured in an element of $\kappa$ and is allowed to look at finitely many other logicians (in front of them). Every logician writes down finitely many guesses and if one logician guesses the colour of their hat, all logicians win. For which $\kappa$ can the logicians win?
\end{question}

Surprisingly, the additional restriction on their vision does not matter at all to the logicians. 

\begin{answer}
The logicians win if and only if they win the unrestricted $(\omega,\kappa,\gamma)$-hat game, that is if and only if $\kappa$ does not have the $\gamma$-MInA.
\end{answer}

Our proof will proceed as follows: first, we describe the losing threshold of this game in terms of a property which is similar to the $\gamma$-MInA and then prove that it is actually equivalent the $\gamma$-MInA. Once again, we do not know of a direct argument which translates a winning strategy of the $(\omega,\kappa,\gamma)$-game into a winning strategy for the well-ordered $(\omega,\kappa,\gamma)$-game.\\
The relevant property is the $\gamma$-FInA.

\begin{definition}
Suppose $\kappa$, $\gamma$ are cardinals. If $\mathcal F$ is a collection of functions $f\colon \kappa^{n_f}\rightarrow[\kappa]^{<\gamma}$ then a set $A\subseteq\kappa$ is \textit{forwards $\mathcal F$-independent} if no $\alpha\in A$ can be generated by larger elements of $A$ via functions in $\mathcal F$, i.e.
\[\forall\alpha\in A\forall f\in\mathcal F\forall \beta_1,\dots,\beta_{n_f}\in A\setminus (\alpha+1)\ \alpha\notin f(\beta_1,\dots,\beta_{n_f}).\]
We say that $\kappa$ has the $\gamma$-\textit{Forwards Independent Attribute} or $\gamma$-\textit{FInA} if and only if for any countable set $\mathcal F$ as above there is a countably infinite forwards $\mathcal F$-independent set $A\subseteq\kappa$.
\end{definition}

For well-ordered $(\omega,\kappa,\gamma)$-hat games we have a version of Lemma \ref{lem:countably-many-strategies}.
In fact it is easier to prove in this case.
\begin{lemma}\label{lem:countably-many-strategies-well-ordered}
    Consider a well-ordered $(\omega,\kappa,\gamma)$-game with $\gamma\geq\omega$.
    Suppose there is a sequence of continuous, forward-looking strategies $\tau^N$, such that for all hat functions $h\colon\omega\to\kappa$, there is $N<\omega$ and $i<\omega$ such that $h(i)\in \tau^N_i(h)$.
    Then this game is winnable.
\end{lemma}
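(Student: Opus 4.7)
The plan is to adapt the proof of Lemma \ref{lem:countably-many-strategies}, with significant simplification due to the forward-looking structure. The key new ingredient will be a shift trick that promotes the hypothesis to one about correct-guess pairs at arbitrary positions; once this is observed, each logician will have enough forward information to run the combined strategy themselves, rendering the two-group split of Lemma \ref{lem:countably-many-strategies} unnecessary.

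First I would define, for each $k<\omega$, the shifted strategy $\tau^{N,k}$ by $\tau^{N,k}_i(h) = \tau^N_{i-k}(h\circ g_k)$ for $i\geq k$ and $\emptyset$ otherwise, where $g_k(j)=j+k$. These remain continuous and forward-looking, since $\tau^{N,k}_i(h)$ only involves $h$ at positions $>i$. Applying the given hypothesis to the shifted hat function $h\circ g_k$ then yields the stronger statement: for every $h$ and every $k$, some $(N,i)$ with $i\geq k$ satisfies $h(i)\in\tau^{N,k}_i(h)$. In particular, correct-guess pairs can be found arbitrarily far forward.

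Next I would define, for each logician $i$, the finite bound $M(h,i)$ as the least $M$ such that there exist $N,k,i'<M$ with $i'>i$ and $h(i')\in\tau^{N,k}_{i'}(h)$. The previous paragraph with $k=i+1$ shows $M(h,i)<\omega$, and logician $i$ can compute it by a finite search, since every required check only involves hats at positions $>i$, all visible by forward-lookingness. Logician $i$'s guess is then $\bigcup\{\tau^{N,k}_i(h) : N,k<M(h,i),\ k\leq i\}$, a finite union of ${<}\gamma$-sized sets, hence itself ${<}\gamma$-sized because $\gamma\geq\omega$.

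To prove this strategy wins, I would let $M^*(h)$ be the least $M$ such that some $(N,k,i)$ with $N,k,i<M$ satisfies $h(i)\in\tau^{N,k}_i(h)$, pick a witness $(N_0,k_0,i_0)$, and note that $k_0\leq i_0$ (otherwise $\tau^{N_0,k_0}_{i_0}$ would be empty). Since the defining condition for $M(h,i_0)$ is strictly more restrictive than that for $M^*(h)$ (the extra requirement $i'>i_0$), one gets $M(h,i_0)\geq M^*(h)>\max(N_0,k_0)$, so $(N_0,k_0)$ lies in the index set for logician $i_0$'s guess and $h(i_0)\in\tau^{N_0,k_0}_{i_0}(h)$ is included. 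The hardest step conceptually is the first: recognizing shift-invariance and that shifts fit into the family without losing forward-lookingness; after that, the rest is routine bookkeeping.
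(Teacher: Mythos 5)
Your proof is correct and follows essentially the same approach as the paper: you arrange for correct guessers to appear arbitrarily far forward (via shifted copies of the strategies, where the paper splits the logicians into infinitely many infinite groups), then have each logician search forward for a bound on the index of the first successful strategy and hedge by taking the union of all strategies below that bound, with the global minimizer guessing correctly. The differences are only in bookkeeping, so there is nothing further to add.
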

\begin{proof}
    We can assume that there are infinitely many logicians that guess correctly for at least one of the $\tau^N$, e.g.~by splitting the $\omega$ many logicians into infinitely many infinite groups and applying the strategies to all of the groups individually.
    We describe one winning strategy that combines all the $\tau^N$.
    Each logician $i$ determines the minimal $N$ such that a logician between $i+1$ and $i+N$ guesses correctly for a strategy $\tau^k$ with $k\leq N$.
    They can do it because they know that there is such an $N$, and for each $N$ they can check if it is the case while only looking at finitely many hats of logicians with a higher index.
    Then logician $i$ guesses the union of $\tau^k_i(h)$ over all $k\leq N$.

    There is a logician $i$ for which the smallest $N$ with $h(i) \in \tau^N_i(h)$ is minimal, and that logician guesses correctly when using the strategy above.
\end{proof}

\begin{lemma}
For $\gamma\geq\omega$, the logicians win the well-ordered $(\omega,\kappa,\gamma)$-hat game if and only if $\kappa$ has the $\gamma$-FInA.
\end{lemma}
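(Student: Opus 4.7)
The plan is to mirror the MInA analysis of Answer \ref{answer:omega-logicians-finitely-many-guesses}, replacing mutual independence with forwards independence to reflect that each logician's strategy depends only on hats at strictly larger positions. As a preliminary, I would verify a continuous-function reformulation of FInA analogous to Lemma \ref{lem:mina-with-cont-fcts}: $\kappa$ has the $\gamma$-FInA iff every countable family of continuous $f : \kappa^\omega \to [\kappa]^{<\gamma}$ admits an infinite $A \subseteq \kappa$ with $\alpha \notin f(s)$ whenever $\alpha \in A$ and $s \in (A \setminus (\alpha+1))^\omega$; the proof is a verbatim adaptation via finite truncations.

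Given any forward-looking strategy $\langle f_i : i < \omega\rangle$ for the logicians, assemble the $f_i$ into a countable continuous family and apply FInA (when available) to extract a forwards $\mathcal F$-independent set $A = \{a_0 < a_1 < \cdots\} \subseteq \kappa$ enumerated in ordinal order. Colouring logician $i$ by $h(i) = a_i$, every hat logician $i$ can see (at some $j > i$) carries a colour $a_j > a_i$, so forwards independence of $A$ forces $h(i) = a_i \notin f_i(\ldots)$. Every logician then guesses incorrectly and the strategy fails, supplying one direction of the biconditional.

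For the reverse direction, suppose $\mathcal F = \{f_n : n < \omega\}$ is a countable family witnessing the failure of FInA, i.e.\ admitting no infinite forwards $\mathcal F$-independent set. For each $N$ I define the forward-looking strategy $\tau^N$: logician $i$ inspects the hats at positions $i+1,\ldots,i+N$ and outputs the finite union
\[\bigcup_{n \leq N}\ \bigcup_{(j_1,\ldots,j_{n_{f_n}}) \in \{i+1,\ldots,i+N\}^{n_{f_n}}} f_n\bigl(h(j_1),\ldots,h(j_{n_{f_n}})\bigr),\]
together with an auxiliary strategy $\tau_\ast$ for colourings with eventually finite range, in which each logician lists the finitely many distinct colours seen within a large window ahead. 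Lemma \ref{lem:countably-many-strategies-well-ordered} then combines these countably many forward-looking strategies into a single winning strategy, provided every colouring is defeated by some $\tau^N$ or by $\tau_\ast$.

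The main obstacle is the positional realisation of the FInA failure: the combinatorial witness must translate into an actual position at which a logician's hat appears. Given a colouring $h$ with infinite range, let $A_\infty \subseteq \mathrm{ran}(h)$ be the set of colours that appear infinitely often in $h$. If $A_\infty$ is finite then $h$ has an eventually finite-range tail and $\tau_\ast$ wins; otherwise $A_\infty$ is infinite, so the failure of FInA applied to the infinite set $A_\infty$ yields $\alpha \in A_\infty$, $f_n \in \mathcal F$, and $\beta_1,\ldots,\beta_{n_{f_n}} \in A_\infty$ with $\beta_k > \alpha$ and $\alpha \in f_n(\beta_1,\ldots,\beta_{n_{f_n}})$. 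Fix any occurrence $i^*$ of $\alpha$; because each $\beta_k$ appears infinitely often, choose positions $j_k^* > i^*$ with $h(j_k^*) = \beta_k$ and set $N := \max_k(j_k^* - i^*)$. Then $\tau^N$ makes logician $i^*$ guess correctly, completing the verification.
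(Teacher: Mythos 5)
Your overall architecture is the one the paper intends (it explicitly says to rerun the argument for Question \ref{que:omega-kappa-infinite-game} with ``the next $N$ logicians'' in place of ``the first $N$''), the continuous reformulation of FInA is fine, and your first direction is correct: colouring logician $i$ with the $i$-th element of a forwards independent set in increasing order makes positional order agree with ordinal order, so every guess misses. The gap is in the converse, at exactly the point you flag as the main obstacle. Your claim that ``$A_\infty$ finite implies $h$ has an eventually finite-range tail'' is false: an injective colouring has $A_\infty=\emptyset$ yet every tail has infinite range, and $\tau_\ast$ certainly does not defeat it, since no logician ever sees their own colour ahead of them. Worse, the case your $A_\infty$ argument does handle ($A_\infty$ infinite) is already covered by the trivial strategies in which logician $i$ lists every colour appearing at positions $i+1,\dots,i+N$ (some colour repeats, so some logician sees their own colour ahead). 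So your elaborate step covers only easy colourings, and the genuinely hard case --- injective $h$, where the ordinal order of $\mathrm{ran}(h)$ and the positional order along $\omega$ can disagree arbitrarily --- is exactly the one left unproved.

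The missing idea is to reconcile the two orders by passing to a subsequence rather than by counting multiplicities. Suppose $h$ is injective and defeats every $\tau^N$; then for all $i$, all $f\in\mathcal F$ and all positions $j_1,\dots,j_m>i$ we have $h(i)\notin f(h(j_1),\dots,h(j_m))$. Since there is no infinite decreasing sequence of ordinals, every injective $h\colon\omega\to\kappa$ admits an infinite $I\subseteq\omega$ on which $h\upharpoonright I$ is strictly increasing. On $I$ positional order and ordinal order coincide, so the displayed property says precisely that $h[I]$ is an infinite forwards $\mathcal F$-independent set, contradicting the assumed failure of the $\gamma$-FInA. Combining the strategies $\tau^N$ with the ``list the window ahead'' strategies $\sigma^N$ and invoking Lemma \ref{lem:countably-many-strategies-well-ordered} then completes the proof; with this repair your write-up goes through.
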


\begin{proof}
The argument is basically the same as the solution to Question \ref{que:omega-kappa-infinite-game}. When constructing strategies, we roughly have to adjust by looking at the next $N$ logicians if we had previously looked at the first $N$ logicians. That these adjustments work out corresponds exactly to the weakening from mutually independent to forwards independent.
We leave the exact details to the reader.
\end{proof}

The punchline is that the $\gamma$-FInA is the same as the $\gamma$-MInA.

\begin{lemma}\label{lem:FInA-MInA-equivalence}
Let $\gamma>1$ be a cardinal.
A cardinal $\kappa$ has the $\gamma$-FInA if and only if it has the $\gamma$-MInA.
\end{lemma}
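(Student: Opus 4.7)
The direction $\gamma$-MInA $\Rightarrow$ $\gamma$-FInA is immediate: the FInA condition restricts the universal quantifier in the definition of MInA to tuples $\vec\beta\in (A\setminus(\alpha+1))^{n_f}\subseteq (A\setminus\{\alpha\})^{n_f}$, so a mutually $\mathcal F$-independent set is automatically forwards $\mathcal F$-independent.

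For the converse, my plan is to construct, given a countable family $\mathcal F=\{f_k\}_{k<\omega}$ of functions $f_k\colon\kappa^{n_k}\to[\kappa]^{<\gamma}$, a countable enlargement $\mathcal F'\supseteq\mathcal F$ of continuous functions $\kappa^\omega\to[\kappa]^{<\gamma}$ such that any infinite forwards $\mathcal F'$-independent set $A$ is automatically mutually $\mathcal F$-independent. By the continuous analogue of Lemma~\ref{lem:mina-with-cont-fcts} (which is proved just as there), applying $\gamma$-FInA to $\mathcal F'$ then yields the desired mutually $\mathcal F$-independent set.

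The observation guiding the construction of $\mathcal F'$ is as follows. Suppose $A$ is forwards $\mathcal F$-independent but fails mutual $\mathcal F$-independence at some witnessing configuration $\alpha\in f(\beta_1,\dots,\beta_n)$ with $\alpha\in A$ and $\beta_i\in A\setminus\{\alpha\}$. Let $\alpha^*:=\min(\alpha,\beta_1,\dots,\beta_n)$. Either $\alpha^*=\alpha$, in which case all $\beta_i>\alpha$ and this is already a forwards failure for $f$ (excluded by FInA applied to $f$ itself), or $\alpha^*=\beta_j$ for some $j$, in which case $\alpha$ and every other $\beta_i$ are strictly above $\beta_j$. In the latter situation the failure should be detectable from $\beta_j$'s perspective, i.e.~as a forwards $\mathcal F'$-failure at $\beta_j$ evaluated on the tuple of larger elements. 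Accordingly, for each $f\in\mathcal F$ of arity $n$ and each $j\leq n$, I would add to $\mathcal F'$ auxiliary continuous functions encoding the $f$-equation inverted at position $j$: morally, functions of the form $(\gamma_1,\dots,\gamma_n)\mapsto\{\beta:\gamma_1\in f(\gamma_2,\dots,\gamma_j,\beta,\gamma_{j+1},\dots,\gamma_n)\}$.

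The main obstacle is cardinal control of the output: the naive inverse can have size up to $\kappa$, not fitting inside $[\kappa]^{<\gamma}$. The resolution is to replace the single inverse by countably many parallel Skolem selectors $g_{f,j,m}$ ($m<\omega$) that pick canonical representatives of the preimage in the global well-ordering of $\kappa$, using that each selector has output of size $\leq 1<\gamma$. Combined with the fact that $\gamma$ (when infinite) is closed under finite unions so that continuous functions built from these selectors also land in $[\kappa]^{<\gamma}$, and with an auxiliary diagonal-thinning argument to control preimages whose rank exceeds $\omega$ (using that the constructed set is countable and that $\mathcal F'$ can be further enriched by continuous lookups into later portions of the input sequence), forwards $\mathcal F'$-independence rules out the second case of the dichotomy, completing the proof. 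Alternatively, the equivalence may be obtained via the game-theoretic characterisations: $\gamma$-MInA and $\gamma$-FInA correspond respectively to the unrestricted and well-ordered $(\omega,\kappa,\gamma)$-hat games being losing, and one may argue as in Proposition~\ref{prop:amount-of-logicians-irrelevant} that a winning unrestricted strategy can be converted into a winning well-ordered one, which also yields the missing direction.
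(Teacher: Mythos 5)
Your easy direction is fine, and your instinct to close $\mathcal F$ under ``inverted'' versions of its functions does appear in the paper's proof (closure property \ref{item:F-closed-under-minimal-witnesses} there is precisely a least-witness Skolem selector). But the central claim of your plan --- that one can produce a countable $\mathcal F'\supseteq\mathcal F$ of functions into $[\kappa]^{<\gamma}$ such that \emph{every} infinite forwards $\mathcal F'$-independent set is already mutually $\mathcal F$-independent --- does not get past the cardinality obstacle you yourself identify. If $\alpha\in f(\beta_1,\dots,\beta_n)$ and the minimum of the configuration is $\beta_j$, the fibre $\{\beta:\alpha\in f(\dots,\beta,\dots)\}$ can have size $\kappa$ (e.g.\ for $f(\beta_1,\beta_2)=\{\beta_2\cdot 2+(\beta_1\bmod\omega)\}$ the fibre over $(\alpha,\beta_2)$ in the first coordinate is a full congruence class), and the particular $\beta_j\in A$ witnessing the failure has no reason to be among the countably many canonical representatives your selectors pick out. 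Forwards $\mathcal F'$-independence of $A$ then only forbids the \emph{selected} representatives from lying in $A$; it does not exclude the failure itself. The ``diagonal-thinning'' step is where all the work lies, and no enlargement of the function family can do it on its own: the paper explicitly notes that forwards independent sets need not be mutually independent, and its proof does not upgrade the given set $A$ but rather \emph{constructs a new set} $\{\alpha_i\mid i<\omega\}$ by a recursion in which each $\alpha_i$ is chosen \emph{minimal} subject to keeping $\{\alpha_0,\dots,\alpha_i\}$ together with a cofinite piece of $A$ forwards independent. It is this minimality, played against the least-witness selectors and closure under composition, that converts a putative mutual-independence failure into the statement that some ordinal $\beta\leq\alpha_{k_a}$ ought to have been chosen at an earlier stage of the recursion --- the contradiction. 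That recursive minimization (together with the reduction to regular $\gamma$ via Proposition \ref{prop:FInA-step-up}, which you also need so that the composition closure stays inside $[\kappa]^{<\gamma}$) is the missing idea.

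Your fallback route is not available either: translating a winning strategy for the unrestricted $(\omega,\kappa,\gamma)$-game into one for the well-ordered game is exactly what the authors say they do \emph{not} know how to do directly (see the remark preceding the definition of the $\gamma$-FInA, and the analogous remark after Proposition \ref{prop:amount-of-logicians-irrelevant}, which is itself proved combinatorially rather than by strategy translation). In the paper the equivalence of the two games is deduced \emph{from} Lemma \ref{lem:FInA-MInA-equivalence}, so invoking it here would be circular.
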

At the heart of the proof is a modification of an argument due to Koepke \cite{Koe84}. We also mention that the result is not obvious: there are certainly forwards independent sets which are not mutually independent.
To eliminate the case in which $\gamma$ is singular, we need another small result first.

\begin{proposition}\label{prop:FInA-step-up}
Suppose $\kappa,\gamma$ are infinite cardinals and $\kappa$ has the $\gamma$-FInA. Then $\kappa$ has the $\gamma^+$-FInA. 
\end{proposition}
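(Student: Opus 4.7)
The plan is to reduce $\gamma^+$-FInA to the $\gamma$-FInA hypothesis. Given a countable family $\mathcal{F}=\{f_n:n<\omega\}$ with $f_n\colon\kappa^{m_n}\to[\kappa]^{\leq\gamma}$, I would construct a countable auxiliary family $\mathcal{F}'$ of $[\kappa]^{<\gamma}$-valued functions and apply $\gamma$-FInA to obtain a countably infinite forwards $\mathcal{F}'$-independent $A$, then verify $A$ is also forwards $\mathcal{F}$-independent.

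The first natural attempt is, for each $n,k<\omega$, to define the singleton-valued function $d_{n,k}\colon\kappa^{m_n+1}\to[\kappa]^{<\gamma}$ so that $d_{n,k}(\vec\beta,\eta)$ contains the $k$-th smallest element of $f_n(\vec\beta)\cap\eta$ (and is empty if no such element exists). Let $A$ be a countably infinite forwards $\{d_{n,k}\}$-independent set produced by $\gamma$-FInA. For $\alpha\in A$ and $\vec\beta\in(A\setminus(\alpha+1))^{m_n}$, assume for contradiction that $\alpha\in f_n(\vec\beta)$; picking any $\eta\in A$ with $\eta>\alpha$, the element $\alpha$ lies in $f_n(\vec\beta)\cap\eta$ at some rank $r$. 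If $r<\omega$ then $\alpha=d_{n,r}(\vec\beta,\eta)$, contradicting forwards independence. This argument already suffices for $\gamma=\omega$, since $|f_n(\vec\beta)\cap\eta|\leq\omega$ forces $r<\omega$.

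When $\gamma>\omega$, however, ranks $r\geq\omega$ may occur, and the family $\{d_{n,k}:n,k<\omega\}$ does not directly witness them. To handle these, I would enrich $\mathcal{F}'$ by adding auxiliary functions taking extra parameters $\vec\gamma\in\kappa^l$, for example $d_{n,k,l}(\vec\beta,\eta,\vec\gamma)$ returning the $k$-th smallest element of $f_n(\vec\beta)\cap\eta$ strictly above $\max(\vec\gamma)$. Following the modification of Koepke's argument alluded to in the paper, the idea is that any rank $r<\gamma$ can be broken into finitely many natural-number sub-ranks by inserting intermediate elements $\vec\gamma\in A$ that lie in $f_n(\vec\beta)$ between $\alpha$ and $\eta$ at carefully chosen heights. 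Since the enriched family is still indexed by triples $(n,k,l)$, it remains countable and lies inside $[\kappa]^{<\gamma}$.

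The main technical hurdle is ensuring that $A$ actually contains the intermediate elements $\vec\gamma$ required for this rank reduction: the forwards-independence condition only gives information about tuples drawn from $A$ lying above $\alpha$, so we must force $A$ to be sufficiently dense inside each $f_n(\vec\beta)$. When $\gamma$ has uncountable cofinality, a set of size $\gamma$ cannot be decomposed into countably many pieces of size $<\gamma$, so we cannot simply simulate high-rank membership by decomposition; additional auxiliary functions enforcing the density of $A$ within the sets $f_n(\vec\beta)$ will be needed. Verifying that the resulting package of auxiliary functions is countable and that the forwards $\mathcal{F}'$-independent $A$ satisfies all of the required density conditions is where the bulk of the combinatorial bookkeeping lies, and is presumably why the authors isolate this proposition as a separate step on the way to the full FInA--MInA equivalence.
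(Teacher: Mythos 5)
Your reduction strategy is the right general shape, but the proposal does not close the one gap that \emph{is} the content of this proposition, and the partial ideas offered for closing it do not work. First, the claim that the naive family $\{d_{n,k}\mid n,k<\omega\}$ already suffices for $\gamma=\omega$ is false: a set in $[\kappa]^{\leq\omega}$ may have order type $\omega+1$ or larger, so an element $\alpha\in f_n(\vec\beta)$ can have infinite rank in the increasing enumeration and is then not the ``$k$-th smallest'' element for any $k<\omega$. (This particular case is repairable by fixing, for each countable $X$, a \emph{surjection} $e_X\colon\omega\to X$ and using $\{e_{f_n(\vec\beta)}(k)\}$ instead of the increasing enumeration --- but that repair is exactly what fails to generalise, since for $\gamma>\omega$ a surjection $\gamma\to X$ would force $\gamma$-many auxiliary functions while the family must stay countable.) Second, the proposed rank-reduction for $\gamma>\omega$ is self-defeating: an infinite ordinal rank $r<\gamma^+$ cannot be broken into finitely many natural-number sub-ranks by finitely many landmarks, and, worse, the landmarks are required to be elements of $A$ lying inside $f_n(\vec\beta)$ strictly between $\alpha$ and $\vec\beta$ --- but any such element would itself witness a failure of forwards $\mathcal F$-independence of $A$, which is precisely the conclusion being proved. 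A forwards-independent $A$ is necessarily \emph{sparse} in the sets $f_n(\vec\beta)$, so the ``density of $A$ within $f_n(\vec\beta)$'' that you defer to further bookkeeping cannot be arranged; the proposal ends exactly where the real argument has to begin.

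For comparison, the paper avoids ranks altogether. Its proof translates the statement into hat games: given a winning forward-looking strategy $\tau$ in which each logician lists $\leq\gamma$ guesses (viewed as a function $\gamma\to\kappa$), each logician $n$ looks ahead to the least logician $m>n$ who guesses correctly under $\tau$, reads off the position $\delta_m(h)<\gamma$ at which $m$'s correct guess occurs, and submits only $\tau_n(h)\upharpoonright(\delta_m(h)+1)$, a list of size $<\gamma$; well-foundedness of the ordinals guarantees that among the (infinitely many) $\tau$-winners some $n$ has $\delta_n(h)\leq\delta_m(h)$ and still wins. A direct combinatorial version of the same idea fixes surjections $s_\xi\colon\gamma\to\xi$ for $\xi<\gamma^+$ and outputs, as the value of an auxiliary function, an entire initial segment $\{x_{s_\xi(i)}\mid i\leq\delta\}$ of $f_n(\vec\beta)$ of size $<\gamma$, where the cut-off $\delta<\gamma$ is extracted from a minimality argument over the purported bad set rather than from the rank of $\alpha$ itself. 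Either of these mechanisms --- truncating at a position certified by \emph{another} element's data, plus a well-foundedness/minimality argument --- is the missing idea.
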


\begin{proof}
We prove this using hat games.
The statement is equivalent to this: suppose the well-ordered hat game is winnable with $\kappa$ many colours and $\gamma$ many guesses; then it is also winnable with $\kappa$ many colours and $<\gamma$ many guesses.
Let $\tau$ be a strategy for the logicians with $\gamma$ many guesses.
Note that necessarily infinitely many logicians guess correctly for any possible hat colouring.
We describe a winning strategy $\tau'$ where each logician makes $<\gamma$ guesses.
For each logician $n$ and hat function $h$, we view $\tau_n(h)$ as a function $\tau_n(h):\gamma\to\kappa$.
Let $\delta_n(h)$ be the least element of $\gamma$ such that $h(n) = \tau_n(h)(\delta_n(h))$, if it exists.
Now each logician $n$ can determine the least logician $m>n$ that guesses correctly when following $\tau$.
Then $\delta_m(h)$ exists and can be determined by logician $n$.
Now logician $n$ guesses the restriction of $\tau_n(h)$ to $\delta_m(h)+1$.

Logician $n$ guesses correctly if they guessed correctly for the strategy $\tau$ and $\delta_n(h) \leq \delta_m(h)$.
Since there is no infinite decreasing sequence of ordinals, there must be a logician that guesses correctly.

\end{proof}

\begin{proof}[Proof of Lemma \ref{lem:FInA-MInA-equivalence}]
It is clear that $\gamma$-MInA implies $\gamma$-FInA, so assume that $\kappa$ is $\gamma$-FInA.
It is straightforward to see that at least $\omega$-FInA holds, so assume $\gamma\geq\omega$.
Further, we may have that $\gamma$ is regular as otherwise we can replace $\gamma$ by the regular cardinal $\gamma^+$ by Proposition \ref{prop:FInA-step-up}.
Let $\mathcal F$ be a countable collection of functions $f\colon \kappa^{n_f}\rightarrow [\kappa]^{<\gamma}$. Since $\gamma$ is regular and by possibly enlarging $\mathcal F$, we may assume without loss of generality that 
\begin{enumerate}[label=($\mathcal F$.\roman*)]
    \item $\mathcal F$ is closed under composition in the sense that if $f, f_1,\dots, f_{k}\in \mathcal F$ with $k=n_f$ then the function $g\colon \kappa^{\sum_{i=1}^k n_{f_i}}\rightarrow [\kappa]^{<\gamma}$ is in $\mathcal F$ where 
    $$g(\alpha_1^1,\dots, \alpha^1_{n_{f_1}},\dots, \alpha_1^{k},\dots, \alpha_{n_{f_{k}}}^{k})=\bigcup_{\delta_1\in f_1(\alpha_1^1,\dots, \alpha^1_{n_{f_1}})}\dots\bigcup_{\delta_{k}\in f_{k}(\alpha_1^{k},\dots, \alpha_{n_{f_{k}}}^{k})} f(\delta_1,\dots, \delta_{k}),$$
    \item $\mathcal F$ is closed under permutations of arguments, that is if $f\in\mathcal F$ and $\sigma\in S_{n_f}$ is a permutation then $(\alpha_1,\dots,\alpha_{n_f})\mapsto f(\alpha_{\sigma(1)},\dots,\alpha_{\sigma(n_f)})$ is in $\mathcal F$ and
    \item\label{item:F-closed-under-minimal-witnesses} for any $f\in \mathcal F$ and $m<n_f$ there is $r\in\mathcal F$ so that
    $$\beta_1,\dots,\beta_m\in r(\alpha, \alpha_1,\dots, \alpha_n)$$
    where $\beta_m\geq\dots\geq\beta_1$ are lexicographically least\footnote{This means $\beta_m$ is minimized first, then $\beta_{m-1}$, and so on.} with 
    $$\alpha\in f(\beta_1,\dots, \beta_m,\alpha_1,\dots,\alpha_n)$$
    if such ordinals exist.
\end{enumerate}

By assumption, there is $A\subseteq\kappa$ an infinite forwards $\mathcal F$-independent set. Next, we construct by recursion on $i<\omega$ a strictly increasing sequence $\langle\alpha_i\mid i<\omega\rangle$ of elements in $\kappa$ as well as finite subsets $\langle B_i\mid i<\omega\rangle$ of $A$. We will make sure that 
\begin{equation*}\tag*{$(\ast)_{i}$}\label{eq:alpha-i-property}
\{\alpha_{0},\dots, \alpha_{i}\}\cup A\setminus\left(\bigcup_{k=0}^i B_{k}\right)\text{is forwards $\mathcal F$-independent}
\end{equation*}
for all $i<\omega$.
We will also make sure that the least element of $A\setminus\left(\bigcup_{k=0}^i B_k\right)$ is larger than $\alpha_i$.
Suppose $\alpha_j, B_j$ are defined for all $j<i$. Then we set $\alpha_i$ to be the least $\beta$ so that $\exists B\in[A]^{<\omega}$ with the property that
$$\{\alpha_{0},\dots, \alpha_{i-1}\},\ \{\beta\},\  A\setminus\left(\bigcup_{k=0}^{i-1} B_{k}\cup B\right)$$
\begin{itemize}
    \item are listed in order, i.e.~any member of a set listed earlier is strictly smaller than every member of a set listed later and
    \item their union is forwards $\mathcal F$-independent.
\end{itemize}
We then set $B_i$ to be some such witness $B$ and note that \ref{eq:alpha-i-property} is satisfied. This construction does not break down: the minimal left-over point $\beta=\min A\setminus \bigcup_{j<i} B_j$ satisfies the requirement with $B=\{\beta\}$.\\
It follows from \ref{eq:alpha-i-property} for all $i<\omega$ that $\{\alpha_i\mid i<\omega\}$ is forwards $\mathcal F$-independent and we aim to show that it is even fully mutually $\mathcal F$-independent. Suppose towards a contradiction that this is not the case, so that there are 
$$\alpha_{k_0}<\dots<\alpha_{k_a}<\alpha_i<\alpha_{l_0}<\dots<\alpha_{l_b}$$
and some $f\in\mathcal F$ with 
$$\alpha_i\in f(\alpha_{k_0},\dots, \alpha_{k_a},\alpha_{l_0},\dots,\alpha_{l_b}).$$
By \ref{item:F-closed-under-minimal-witnesses}, there are 
$$\beta_0\leq\dots\leq\beta_a\leq\alpha_{k_a}$$
and some $r\in\mathcal F$ with 
\begin{enumerate}[label=($\vec\beta$.\roman*)]
    \item\label{item:alphai-generated-by-betas} $\alpha_i\in f(\beta_0,\dots,\beta_a,\alpha_{l_0},\dots,\alpha_{l_b})$ and
    \item\label{item:betas-generated-by-alphas} $\beta_0,\dots,\beta_a\in r(\alpha_i,\alpha_{l_0},\dots,\alpha_{l_b})$.
\end{enumerate}
By \ref{item:betas-generated-by-alphas}, no $\beta_k$ can be among the $\{\alpha_j\mid j<\omega\}$. We set 
$$B=\bigcup_{k=0}^{l_b}B_k.$$
Observe that not all $\beta_k$, $k\leq a$, can be generated by functions in $\mathcal F$ using parameters in $A\setminus B$: otherwise, by \ref{item:alphai-generated-by-betas} and since $\mathcal F$ is closed under composition, $\alpha_i$ can be generated from a function in $\mathcal F$ by parameters in 
$$\{\alpha_{l_0},\dots,\alpha_{l_b}\}\cup A\setminus B,$$
which contradicts $(\ast)_{l_b}$. So let $\beta$ be one of the $\beta_k$ not generated in this way. Find the least $j\leq i$ with $\beta<\alpha_j$. We will reach a contradiction by showing that an ordinal ${\leq}\beta$ should have been picked instead of $\alpha_j$ at step $j$ of the above construction. 
It is clear that 
$$\{\alpha_{0}, \dots, \alpha_{j-1}\},\  \{\beta\},\  A\setminus\left(\bigcup_{k=0}^{j-1} B_{k}\cup B\right)=A\setminus B$$
is listed in order. Let $U$ be the union of the above sets. For $\delta\in U$, say that $U$ is \textit{forwards independent at} $\delta$ if 
$$\forall f\in\mathcal F\ \forall \delta_1,\dots,\delta_m\in U\setminus (\delta+1)\ \delta\notin f(\delta_1,\dots,\delta_m).$$
We surely have that $U$ is forwards independent at any $\delta\in U$ above $\beta$ since $A$ is forwards $\mathcal F$-independent. By our choice of $\beta$, $U$ is also forwards independent at $\beta$. Finally, $U$ is forwards independent at all $\alpha_{0},\dots,\alpha_{j-1}$: if some $\alpha_{m}$, $m<j$, could be generated via a function in $\mathcal F$ from parameters in 
$$\{\alpha_{m+1},\dots,\alpha_{j-1}\}\cup\{\beta\}\cup A\setminus B$$
then $\alpha_{m}$ could be generated via a function in $\mathcal F$ from parameters in 
$$\{\alpha_{m+1},\dots,\alpha_{j-1}\}\cup\{\alpha_i,\alpha_{l_1},\dots,\alpha_{l_b}\}\cup A\setminus B$$
as $\beta$ can be generated from $\{\alpha_i,\alpha_{l_1},\dots,\alpha_{l_b}\}$ from a function in $\mathcal F$ and $\mathcal F$ is closed under composition. Once again, this is in contradiction to $(\ast)_{l_b}$. We have shown that $\{\alpha_i\mid i<\omega\}$ is mutually $\mathcal F$-independent.
\end{proof}

It follows that our answer to Question \ref{que:wo-omega-kappa-infinite-game} is correct.

Now we consider the well-ordered analogue of Lemma \ref{lem:combining-strategies-of-different-cardinalities}.
It turns out that this one is much easier to prove.
\begin{lemma}\label{lem:combining-strategies-FINA-case}
    Suppose the well-ordered games $(\omega,\kappa,\delta^+)$ and $(\omega,\delta,\gamma)$ are winnable, with $\delta,\gamma\geq\omega$.
    Then the well-ordered $(\omega,\kappa,\gamma)$-game is winnable as well.
\end{lemma}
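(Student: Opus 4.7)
The plan is to combine $\tau$ (winning for the well-ordered $(\omega,\kappa,\delta^+)$-game) and $\rho$ (winning for the well-ordered $(\omega,\delta,\gamma)$-game) into a countable family of continuous forward-looking strategies $(\tau^K)_{K<\omega}$ for the well-ordered $(\omega,\kappa,\gamma)$-game such that against any hat function at least one logician wins under at least one $\tau^K$; Lemma \ref{lem:countably-many-strategies-well-ordered} then delivers a single winning strategy.

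First I would preprocess $\tau$ so that, for every hat function $h$, the set $W(h)=\{n<\omega\mid h(n)\in\tau_n(h)\}$ is infinite: partition $\omega$ into countably many subsets each order-isomorphic to $(\omega,\leq)$ and run $\tau$ on each piece independently, yielding at least one winner per piece. Fixing a well-ordering of $\kappa$ furnishes, for each $n$ and $h$, an enumeration of $\tau_n(h)$ of length at most $\delta$; for $n\in W(h)$ let $\tilde h(n)<\delta$ be the rank of $h(n)$ in this enumeration. Enumerate $W(h)$ in increasing order as $w_0<w_1<\dots$ and set $\bar h\colon\omega\to\delta$ by $\bar h(k)=\tilde h(w_k)$.

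For each $K<\omega$, in strategy $\tau^K$ the real logician $n$ will pretend to be the $K$-th element of $W(h)$ and simulate virtual logician $K$ playing $\rho$ against $\bar h$. Whenever the simulation queries $\bar h(k)$ for some $k>K$, logician $n$ locates $w_k$ by scanning indices $m>n$ in order and testing $m\in W(h)$ (a finite forward computation using $\tau_m$ and $h(m)$) until the $(k-K)$-th success, then reads off $\bar h(k)=\tilde h(w_k)$. After finitely many such queries $\rho_K$ returns a set $\rho_K(\bar h)\subseteq\delta$ of size $<\gamma$, which logician $n$ decodes back into $\tau_n(h)$ via the fixed enumeration to produce their final ${<}\gamma$-many guesses.

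To see this works, pick for a given $h$ some $K^*$ with $\bar h(K^*)\in\rho_{K^*}(\bar h)$ (guaranteed by the winning property of $\rho$, and well-defined since $W(h)$ is infinite). The real logician $w_{K^*}$ playing $\tau^{K^*}$ has the correct pretension, so their simulation is faithful and decoding $\bar h(K^*)$ recovers $h(w_{K^*})$ among their guesses. The hard part will be verifying finiteness of each simulation—all lookups are forward and only finitely many in total—which hinges precisely on the preprocessing that makes $W(h)$ infinite, so that every $w_k$ demanded by $\rho_K$ exists and is located after finitely many individually-finite $W$-membership tests.
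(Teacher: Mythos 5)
Your proposal is correct and is essentially the paper's own argument: both reduce to the $(\omega,\delta,\gamma)$-game played on the ranks of the true colours inside the guess-lists $\tau_n(h)$ (viewed as functions $\delta\to\kappa$) of the $\tau$-winners, exploit that a logician in the well-ordered game can test by forward simulation whether a logician ahead of them wins under $\tau$, and finish with Lemma \ref{lem:countably-many-strategies-well-ordered}. The only (cosmetic) difference is bookkeeping: the paper indexes its countable family by finite candidate sets $E$ of winners and bails out when the guess is inconsistent, whereas you index by the position $K$ in the winner set and locate the later winners dynamically by forward scanning; also note that to get enumerations of $\tau_n(h)$ of length at most $\delta$ you should fix surjections of $\delta$ onto each ordinal below $\delta^+$ in addition to a well-ordering of $\kappa$, exactly as in the proof of Proposition \ref{prop:FInA-step-up}.
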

\begin{proof}
    Let $\tau$ be a strategy for the well-ordered $(\omega,\kappa,\delta^+)$-hat game.
    For $h\in\kappa^\omega$ and $n\in\omega$ we consider $\tau_n(h)$ as a function $\delta\to\kappa$.
    Let $\rho$ be a strategy for the well-ordered $(\omega,\delta,\gamma)$-hat game.
    For each finite $E\subseteq\omega$ we describe a strategy $\nu^E$ for the well-ordered $(\omega,\kappa,\gamma)$-hat game, such that for all hat functions $h\in\kappa^\omega$, there is an $E$ and an $n$ with $h(n) \in \nu^E(n)$.
    By Lemma \ref{lem:countably-many-strategies-well-ordered}, this shows that the well-ordered $(\omega,\kappa,\gamma)$-hat game is winnable.

    We only need to describe $\nu^E_n$ for $n\in E$.
    Let $e\colon |E|\to E$ be the increasing enumeration of $E$.
    Logician $n$ determines $\tau_m(h)$ for all $m\in E$ with $m>n$.
    For such $m$, logician $n$ can determine if $h(m)$ lies in the image of $\tau_m(h)$.
    If this is not the case for some such $m$, logician $n$ just guesses the empty set for this strategy.
    If this is the case for every such $m$, let $\alpha_m$ be such that $\tau_m(h)(\alpha_m) = h(m)$.
    Now let $h'\colon |E|\setminus (e^{-1}(n)+1) \colon \delta$ be given by $h'(i)  = \alpha_{e(i)}$.
    Logician $n$ now imagines playing as logician $e^{-1}(n)$ in the well-ordered $(\omega,\delta,\gamma)$ game with the hat function $h'$.
    Logician $n$ can try to determine $\rho_{e^{-1}(n)}(h')$.
    If the strategy $\rho$ requires them to look at a hat function of a too large integer, for which $h'$ is not defined, then they just guess the empty set again.
    Otherwise, they also determine $\tau_n(h)$, and they finally guess the set
    \[\nu^E_n(h) = \tau_n(h)(\rho_{e^{-1}(n)}(h')).\]

    To show that this works, fix a hat function $h$ and let $W\subseteq \omega$ be the set of logicians that win for the strategy $\tau$.
    Let $w\colon \omega\to W$ be the increasing enumeration of $W$.
    For $n\in\omega$ there is $h'(n)$ with $h(w(n)) = \tau_{w(n)}(h)(h'(n))$.
    There is an $n$ with $h'(n) \in \rho_n(h')$.
    Furthermore there is an $N$ such that in the well-ordered $(\omega,\delta,\gamma)$-hat game, logician $n$ only looks at logicians smaller than $N$ (and bigger than $n$).
    Let  $E = W \cap w(N)$.
    Then for the strategy $\nu^E$, logician $w(n)$ guesses $\tau_{w(n)}(h)(\rho_{e^{-1}(w(n))}(h')) = \tau_{w(n)}(h)(\rho_n(h'))$, which contains $\tau_{w(n)}(h)(h'(n)) = h(w(n))$.
    So logician $w(n)$ guesses correctly for strategy $\nu^E$.
\end{proof}

\begin{proof}[Proof of Lemma \ref{lem:combining-strategies-of-different-cardinalities}]\label{proof:combining-strategies-of-different-cardinalities}
This now follows directly from Lemma \ref{lem:combining-strategies-FINA-case} and the answer to Question \ref{que:wo-omega-kappa-infinite-game}.
Now we can really view Theorem A as proved.
It was more easy to prove the Lemma in the well-ordered case, because each logician can determine exactly what each logician in front of them will guess; by contrast, in the non-restricted game, it can be impossible to determine what any other logician will guess, because they can look at your own hat.
\end{proof}

\subsection{Proof of Theorem \ref{thm:all-partial-ordered-games-classification}}
We prove our second main result, classifying exactly which $(\mathbb P,\kappa,\gamma)$-hat games are winning.
\potheorem*

\begin{proof}
The $(\mathbb P, \kappa, \gamma)$-hat game is clearly winning if $\kappa<\gamma$ as the logicians can guess all colours in this case. So from now on assume $\gamma\leq\kappa$.
\begin{description}
    \item[Case 1:] $\mathbb P$ has no infinite increasing sequence.  We must show that the $(\mathbb P,\kappa,\gamma)$-hat game is losing, so suppose for a contradiction that $\tau$ is some strategy for the logicians.
    The case assumption implies that every non-empty subset of $\mathbb P$ has a maximal element. We define a colouring $h\colon \mathbb P\rightarrow\kappa$ by transfinite induction. Start by picking a maximal element $p_0\in \mathbb P$ and note that $p_0$ does not have any access to information during the game, so we can define $h(p_0)$ to be a colour not guessed by $p_0$. Now if $p_\beta$ is defined for all $\beta<\alpha$, stop if $\mathbb P\setminus\{p_\beta\mid\beta<\alpha\}=\emptyset$, otherwise pick a maximal element $p_\alpha$ from this set. During the game, $p_\alpha$ is only ever allowed to look at logicians $p_\beta$ with $\beta<\alpha$. As the colour of their hat is already chosen, the guess of $p_\alpha$ according to $\tau$ is determined, so we may let $h(p_\alpha)$ be a colour not guessed.\\ 
    Eventually, we have defined the colouring $h$ on all of $\mathbb P$ and $\tau$ loses against $h$.    
    \item [Case 2] $\gamma$ is finite. Once again, we show that the logicians lose. We may assume that $\kappa=\gamma$, as decreasing $\kappa$ makes it only easier on the logicians. Let us assume the logicians follow a fixed strategy $\tau$.
\begin{claim*}
For any finite $I\subseteq \mathbb P$, there is a colouring $h\colon\mathbb P\rightarrow\kappa$ so that all logicians in $I$ guess incorrectly.
\end{claim*}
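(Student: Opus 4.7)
The plan is to construct $h$ recursively from the top down. Since $\kappa=\gamma$ is finite and each logician lists strictly fewer than $\gamma$ colours, every logician's guess set leaves at least one colour in $\kappa$ unlisted. So if I can arrange that each $p\in I$'s guess set is already determined at the moment I have to pick $h(p)$, I will always have room to place $h(p)$ outside that set.

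Concretely, I would pick any linear extension $p_1,\dots,p_n$ of the induced partial order $(I,<_{\mathbb P})$, so that $p_i<_{\mathbb P} p_j$ forces $i<j$, and colour every hat outside $I$ arbitrarily, say $h(q)=0$ for $q\in\mathbb P\setminus I$. Then I would define $h(p_n),h(p_{n-1}),\dots,h(p_1)$ in this reverse order. At stage $k$, every $q\in\mathbb P$ with $p_k<_{\mathbb P}q$ is either outside $I$ (already coloured $0$) or of the form $p_j$ with $j>k$ (already coloured in an earlier stage), so the entire upper cone of $p_k$ in $\mathbb P$ carries a fixed colouring. Hence $p_k$'s list of guesses $G_{p_k}$ under $\tau$ is fully determined; since $|G_{p_k}|<\gamma=\kappa$, I choose any $c\in\kappa\setminus G_{p_k}$ and set $h(p_k)=c$.

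There is essentially no obstacle: the adaptive nature of a logician's vision (which hats they look at can depend on what they have already seen) causes no trouble because everything strictly above $p_k$ in $\mathbb P$ has been fully coloured before we evaluate $\tau$ on $p_k$, so any sequence of look-ups $p_k$ performs terminates in a definite guess. The only care needed is in the enumeration of $I$, and the linear-extension condition is precisely what guarantees that all $I$-elements above $p_k$ sit at indices $>k$. After the construction, by design, every logician in $I$ guesses incorrectly under $h$, proving the claim.
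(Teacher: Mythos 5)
Your proof is correct and is essentially the paper's argument: the paper inducts on $|I|$ by peeling off a $<_{\mathbb P}$-minimal element of $I$ and colouring it last, which is exactly your top-down pass along a linear extension, with the same key observations that a logician's guess depends only on the (already fixed) colouring of their strict upper cone and that a guess list of size ${<}\gamma=\kappa$ always omits some colour.
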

\begin{proof}
We proceed by induction on the size of $I$. The claim holds trivially if $\vert I\vert=0$, so let us show the inductive step. Let $I\subseteq \mathbb P$ be of size $n+1$. As $I$ is a finite partial suborder, it must have a minimal point $p\in I$. Now, there is a colouring $h\colon\mathbb P\rightarrow \kappa$ so that all logicians in $I\setminus\{p\}$ guess incorrectly. Next, change the colour of logician $p$'s hat to a colour not guessed by $p$. As none of the logicians in $I\setminus\{p\}$ could look at the hat of logician $p$, their guess remains unchanged, so they guess incorrectly and so does $p$ now.
\end{proof}
For each finite $I\in [\mathbb P]^{<\omega}$, choose a colouring $h_I\colon\mathbb P\rightarrow\kappa$ so that all logicians in $I$ guess incorrectly. We have just described a net $h_\bullet=(h_I)_{I\in[\mathbb P]^{<\omega}}$ (we order finite subsets of $\mathbb P$ by inclusion). Since $\kappa$ is finite, $\kappa^{\mathbb P}$ is compact, so the net $h_\bullet$ has a cluster point $h_\ast$ which $\tau$ must lose against: the guess that logician $p$ submits against $h_\ast$ depends only on the finite set $I$ of logicians that $p$ looked at. The set 
$$O=\{g\in\kappa^{\mathbb P}\mid \forall i\in I\ g(i)=h_\ast(i)\}$$
is open in $\kappa^{\mathbb P}$ with $h_\ast\in O$. Hence there must be some $\{p\}\subseteq J\in[\mathbb P]^{<\omega}$ with $h_J\in O$. Logician $p$ guesses the same colours against $h_J$ and $h_\ast$. Since they guesses incorrectly against $h_J$ by choice of $h_J$, logician $p$ does not guess their own colour against $h_\ast$ either.

\item[Case 3:] $\mathbb P$ contains an infinite increasing sequence and $\gamma$ is infinite. We show that the logicians have a winning strategy iff there is a strongly MInA cardinal in the interval $(\gamma,\kappa]$. We can now ``sandwich" the $(\mathbb P,\kappa,\gamma)$-game between two games we have already analysed: 
\begin{align*}
   &\text{the well-ordered }(\omega,\kappa,\gamma)\text{-game is winning} \\
   \Rightarrow & \text{the }(\mathbb P,\kappa,\gamma)\text{-game is winning}\\
   \Rightarrow & \text{the }(\vert\mathbb P\vert,\kappa,\gamma)\text{-game is winning.}
\end{align*}
Here, the first implication follows from the existence of an infinite increasing sequence in $\mathbb P$; the logicians can employ a winning strategy from the well-ordered $(\omega,\kappa,\gamma)$-game on that sequence. The second implication holds as the $(\vert\mathbb P\vert,\kappa,\gamma)$-game is the same as the $(\mathbb P,\kappa,\gamma)$-game without the restriction on vision. The punchline is that since $\gamma$ is infinite, we know from Theorem \ref{thm:all-games-classification} and our answer to Question \ref{que:wo-omega-kappa-infinite-game} that 
\begin{align*}
   &\text{the well-ordered }(\omega,\kappa,\gamma)\text{-game is winning} \\
   \Leftrightarrow & \text{there is no strongly MInA cardinal in the interval }(\gamma,\kappa]\\
   \Leftrightarrow & \text{the }(\vert\mathbb P\vert,\kappa,\gamma)\text{-game is winning,}
\end{align*}
so this also characterises when the $(\mathbb P,\kappa,\gamma)$-game is winning.
\end{description}
\end{proof}

This is as far as our methods will take us. We make extensive use of transitivity of vision in many arguments. This transitivity does not quite hold for the standard $(\lambda,\kappa,\gamma)$-game as logician $a$ may look at logician $b$, who looks back at $a$, yet logician $a$ cannot see their own hat. However, this is the only way in which this can fail and one can work around this small issue, e.g.~in Lemma \ref{lem:countably-many-strategies} with the ``splitting into two groups" trick. One can consider the $(\lambda,\kappa,\gamma)$-hat game more generally with potential visibility governed by a directed graph, but this is considerably more complicated. Already in the case of finitely many logicians and only one guess each a full classification seems completely out of reach. See e.g.~\cite{AloBenShaTam20}, \cite{Bra22} and \cite{HeIdoPrz22} for recent results on this topic.

\section{How large are MInA cardinals?}\label{sec:how-large-are-MInA-cardinals}

So far, we have not said much at all about the size of strongly MInA cardinals. We hope that the reader does not feel we simply proved one opaque combinatorial property to be equivalent to another. The notion of a strongly MInA cardinal is much more tangible from a set theoretic perspective than that of a strongly losing cardinal, as we will see shortly. In fact, there is a good body of literature on the size of the least cardinal $\kappa$ with the free subset property $\mathrm{Fr}_\omega(\kappa,\omega)$. To access this, we prove the equivalence we have mentioned before.

\begin{definition}
    Let $\kappa,\gamma,\lambda$ be cardinals. The free subset property $\mathrm{Fr}_{\gamma}(\kappa,\lambda)$ holds iff for any collection $\mathcal F$ of functions $f\colon \kappa^{n_f}\rightarrow\kappa$ of size $\gamma$, there is a $\mathcal F$\textit{-free} set of size $\lambda$. A $\mathcal F$-free set is a subset $A$ of $\kappa$ which satisfies 
    $$\forall \alpha\in A\forall f\in\mathcal F\forall \alpha_1,\dots,\alpha_{n_f}\in A\setminus\{\alpha\}\ \alpha\neq f(\alpha_1,\dots,\alpha_{n_f}).$$
\end{definition}

\begin{lemma}
    Let $\kappa,\gamma$ be infinite cardinals. $\kappa$ has the $\gamma$-MInA iff $\mathrm{Fr}_\gamma(\kappa,\omega)$ holds.
\end{lemma}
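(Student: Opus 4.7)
The plan is to prove each implication by translating between the set-valued formulation of the MInA and the single-valued formulation of the free subset property. The easier direction decomposes a $[\kappa]^{<\gamma}$-valued function into its ``coordinate'' single-valued pieces; the harder direction aggregates $\gamma$ single-valued functions into countably many set-valued ones.

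For the direction $\mathrm{Fr}_\gamma(\kappa,\omega)\Rightarrow\gamma$-MInA, I would decompose. Given a countable family $\mathcal F=\{f_n\mid n<\omega\}$ with $f_n\colon\kappa^{n_{f_n}}\to[\kappa]^{<\gamma}$, for each $n$ and each $\beta<\gamma$ define $f_n^\beta\colon\kappa^{n_{f_n}}\to\kappa$ to return the $\beta$-th element of $f_n(\vec\alpha)$ in its canonical well-ordering, defaulting to $\alpha_1$ when $\beta\geq|f_n(\vec\alpha)|$. The family $\mathcal G=\{f_n^\beta\mid n<\omega,\beta<\gamma\}$ has cardinality $\gamma$, so $\mathrm{Fr}_\gamma(\kappa,\omega)$ provides a countably infinite $\mathcal G$-free set $A$. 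Any violation of mutual $\mathcal F$-independence, $\alpha\in f_n(\vec\beta)$ with $\vec\beta\in(A\setminus\{\alpha\})^{n_{f_n}}$, would give $\alpha=f_n^\beta(\vec\beta)$ for some $\beta<\gamma$, contradicting freeness.

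For the converse, $\gamma$-MInA$\Rightarrow\mathrm{Fr}_\gamma(\kappa,\omega)$, I would aggregate. Given $\mathcal G=\{g_\xi\mid\xi<\gamma\}$ of size $\gamma$, the plan is to package $\mathcal G$ into a countable family of $[\kappa]^{<\gamma}$-valued functions whose mutual independence secures $\mathcal G$-freeness. If $\mathrm{cf}(\gamma)=\omega$, fix a cofinal sequence $(\gamma_n)_{n<\omega}$ in $\gamma$ and define, for each $n,N<\omega$, $G_{n,N}\colon\kappa^N\to[\kappa]^{<\gamma}$ by
\[G_{n,N}(\alpha_1,\dots,\alpha_N)=\{g_\xi(\alpha_{i_1},\dots,\alpha_{i_{n_{g_\xi}}})\mid\xi<\gamma_n,\ n_{g_\xi}\leq N,\ 1\leq i_j\leq N\},\]
of size at most $|\gamma_n|\cdot N^N<\gamma$. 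Applying MInA to $\{G_{n,N}\mid n,N<\omega\}$ yields an infinite mutually independent $A$, and any witness $\alpha=g_\xi(\vec\beta)$ with $\xi<\gamma_n$ and $\vec\beta\in(A\setminus\{\alpha\})^{n_\xi}$ places $\alpha$ in $G_{n,N}(\vec\beta')$ for an appropriate extension $\vec\beta'$ inside $A\setminus\{\alpha\}$, contradicting mutual independence.

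The main obstacle is the case $\mathrm{cf}(\gamma)>\omega$, where $\gamma$ cannot be exhausted by a countable ascending sequence and the previous aggregation breaks down. Here I would aggregate via an extra input coordinate: fix a surjection $\sigma\colon\kappa\to\gamma$ and, for each $n<\omega$, set
\[G_n(\alpha_0,\alpha_1,\dots,\alpha_n)=\{g_\xi(\alpha_{i_1},\dots,\alpha_{i_{n_{g_\xi}}})\mid\xi<\sigma(\alpha_0),\ n_{g_\xi}\leq n,\ 1\leq i_j\leq n\},\]
which still has size at most $|\sigma(\alpha_0)|\cdot n^n<\gamma$. This aggregation kills a would-be witness $\alpha=g_\xi(\vec\beta)$ only when some $\alpha_0\in A\setminus\{\alpha\}$ satisfies $\sigma(\alpha_0)>\xi$, so the hard part is ensuring that $\sigma(A)$ is unbounded in $\gamma$. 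I expect this to be handled by enriching the countable family with further $[\kappa]^{<\gamma}$-valued functions, using the continuous reformulation from Lemma \ref{lem:mina-with-cont-fcts} to let $G_n$ read more than one coordinate, thereby forcing any mutually independent $A$ to meet arbitrarily large $\sigma$-fibers while preserving the freeness it already certifies; carrying out this enrichment so that a single infinite mutually independent set witnesses freeness for all of $\mathcal G$ simultaneously is the delicate technical step.
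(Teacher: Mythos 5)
Your decomposition direction ($\mathrm{Fr}_\gamma(\kappa,\omega)\Rightarrow\gamma$-MInA) is correct and is essentially the paper's argument (the paper defaults to $0$ rather than $\alpha_1$, which is immaterial). The gap is in the aggregation direction, and it is genuine: your construction for $\mathrm{cf}(\gamma)>\omega$ cannot be completed along the lines you sketch. To kill an arbitrary would-be witness $\alpha=g_\xi(\vec\beta)$ you would need some $\alpha_0\in A$ with $\sigma(\alpha_0)>\xi$ for unboundedly many $\xi<\gamma$, i.e.\ you would need $\sigma(A)$ to be cofinal in $\gamma$. But any mutually independent set produced by the MInA is countable, so $\sigma(A)$ is a countable subset of $\gamma$ and is therefore \emph{bounded} whenever $\mathrm{cf}(\gamma)>\omega$ --- which covers every successor $\gamma$, in particular the basic case $\gamma=\omega_1$. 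No enrichment of the countable family of functions can repair this, since the obstruction is the cardinality of $A$ itself, not a failure to constrain $A$ cleverly enough.

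The paper sidesteps the size problem in the opposite way: instead of keeping the aggregated values below $\gamma$, it first upgrades $\gamma$-MInA to $\gamma^+$-MInA (this is exactly the content of Proposition \ref{prop:FInA-step-up} combined with Lemma \ref{lem:FInA-MInA-equivalence}, or alternatively follows from Theorem \ref{thm:all-games-classification}). Since $\gamma^+$ is regular, one may bundle \emph{all} $\gamma$-many single-valued functions of arity $n$ into a single function $F_n\colon\kappa^n\rightarrow[\kappa]^{\leq\gamma}=[\kappa]^{<\gamma^+}$ given by $F_n(\vec\alpha)=\{f(\vec\alpha)\mid f\in\mathcal F,\ n_f=n\}$; the family $\{F_n\mid n<\omega\}$ is countable, and a mutually independent set for it (which exists by $\gamma^+$-MInA) is immediately $\mathcal F$-free. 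Your cofinal-sequence argument for $\mathrm{cf}(\gamma)=\omega$ is fine as far as it goes, but the step-up lemma is the missing ingredient that handles all cofinalities uniformly; without citing it (or reproving it), the lemma is not established.
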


\begin{proof}
    Suppose $\kappa$ has the $\gamma$-MInA.
    As an immediate consequence of either Theorem \ref{thm:all-games-classification} or the combination of Proposition \ref{prop:FInA-step-up} and Lemma \ref{lem:FInA-MInA-equivalence}, we know that $\kappa$ has the $\gamma^+$-MInA as well.
    Let $\mathcal F$ be a collection of $\gamma$-many functions $f\colon \kappa^{n_f}\rightarrow\kappa$. Now define
    $$F_n\colon\kappa^n\rightarrow[\kappa]^{\leq\gamma}$$
    by $F(\alpha_1,\dots,\alpha_n)=\{f(\alpha_1,\dots,\alpha_n)\mid f\in\mathcal F\wedge n_f=n\}$.
    Next, any mutually $\{F_n\mid n<\omega\}$-independent set is $\mathcal F$-free, so $\mathrm{Fr}_{\gamma}(\kappa,\omega)$ holds.
    
    Now assume that $\mathrm{Fr}_\gamma(\kappa,\omega)$ holds and $\mathcal F$ is a countable set of functions $f\colon \kappa^{n_f}\rightarrow [\kappa]^{<\gamma}$. For $\nu<\gamma$ and $f\in\mathcal F$, define 
    $$f^\nu\colon \kappa^{n_f}\rightarrow\kappa$$
    by 
    $$f^\nu(\alpha_1,\dots,\alpha_n)=\begin{cases}
    \beta & \text{if $\beta$ is the $\nu$-th element of $f(\alpha_1,\dots,\alpha_n)$}\\
    0 & \text{if $f(\alpha_1,\dots,\alpha_n)$ has ordertype ${<\nu}$.}
    \end{cases}$$
    Then it is clear that any $\{f^\nu\mid f\in\mathcal F,\nu<\gamma\}$-free set is mutually $\mathcal F$-independent.
\end{proof}

Unfortunately, we now have to assume a little more knowledge on Mathematical Logic and Set Theory. 

So, what can be said about size of the least strongly MInA cardinal? First of all, we should note that $\mathrm{ZFC}$ does not prove strongly MInA cardinals exist, so it is possible that all $(\omega,\kappa,\omega)$-games are winning for the logicians. In this sense, strongly MInA cardinals are large. The reasons for possible non existence is that their existence has some large cardinal strength. It is a theorem of Baumgartner and independently Devlin-Paris \cite{DevPar73} that the exact consistency strength of the existence of an $\omega$-MInA cardinal  (equivalently a cardinal $\kappa$ with $\mathrm{Fr}_\omega(\kappa,\omega)$) in terms of large cardinals is the existence of the $\omega$-Erd\H{o}s cardinal. We will go on and characterise the strongly MInA cardinals in G\"odels constructible universe $L$, the smallest \text{inner model} of $\mathrm{ZFC}$. It follows from absoluteness of well-foundedness that any strongly MInA cardinal is strongly MInA in $L$: for example if $\tau\in L$ is a strategy for the $(\omega,\kappa,\gamma)$-game which is losing in the true universe $V$ then the tree $T\in L$ of attempts to find a hat colouring which $\tau$ loses to is ill-founded in $V$ and hence must be ill-founded in $L$. Any infinite branch through $T$ corresponds to a full colouring that $\tau$ loses against. This combined with the upcoming characterisation of strongly MInA cardinals in $L$ proves this equiconsistency.

\begin{fact}[Baumgartner, Devlin-Paris \cite{DevPar73}]
The theories 
\begin{enumerate}
    \item $\mathrm{ZFC}+``\text{there is a $\omega$-MInA cardinal}"$,
    \item $\mathrm{ZFC}+``\text{the }\omega\text{-Erd\H{o}s cardinal exists}"$
\end{enumerate}
are equiconsistent.
\end{fact}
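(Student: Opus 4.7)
The plan is to prove the two consistency implications separately. Let $(E)$ stand for the statement ``the $\omega$-Erd\H{o}s cardinal exists'' and $(M)$ for ``there is an $\omega$-MInA cardinal''. By the preceding Lemma, $(M)$ is equivalent to asserting $\mathrm{Fr}_\omega(\kappa,\omega)$ for some $\kappa$.

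For the easy direction $\mathrm{Con}(E)\Rightarrow\mathrm{Con}(M)$ I would prove, directly in $V$, that any $\omega$-Erd\H{o}s cardinal $\kappa$ is itself $\omega$-MInA. Given a countable family $\mathcal F$ of finitary functions $f\colon\kappa^{n_f}\to[\kappa]^{<\omega}$, I associate, for each $f\in\mathcal F$ of arity $n$, each $m\in\{1,\dots,n+1\}$ and each $\sigma\in S_n$, the $2$-colouring $c_{f,m,\sigma}$ of $[\kappa]^{n+1}$ sending $\{\alpha_1<\dots<\alpha_{n+1}\}$ to $1$ iff $\alpha_m\in f(\alpha_{\sigma(i_1)},\dots,\alpha_{\sigma(i_n)})$, where $(i_1,\dots,i_n)$ is the increasing enumeration of $\{1,\dots,n+1\}\setminus\{m\}$. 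Since there are only countably many such $(f,m,\sigma)$, their product is a single partition of $[\kappa]^{<\omega}$ into countably many classes. From $\kappa\to(\omega)^{<\omega}_2$ one derives $\kappa\to(\omega)^{<\omega}_\omega$ by the usual pairing tricks, so there is an infinite $A\subseteq\kappa$ simultaneously homogeneous for every $c_{f,m,\sigma}$. For each $(f,m,\sigma)$ the homogeneous colour must be $0$: otherwise, fixing an $n$-subset of $A$ and letting the remaining point of an $(n+1)$-subset range over $A$ would force infinitely many distinct elements of $A$ to lie in a single finite set, impossible since $|A|=\omega$. Hence $A$ is $\mathcal F$-free, establishing $(M)$.

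The harder direction $\mathrm{Con}(M)\Rightarrow\mathrm{Con}(E)$ I would split into an absoluteness step and a characterisation step, with the aim of producing an $\omega$-Erd\H{o}s cardinal inside $L$. For absoluteness: given any countable $\mathcal F\in L$ of finitary functions on $\kappa$, the failure of existence of an infinite $\mathcal F$-free set is equivalent to well-foundedness of the tree $T_\mathcal F\in L$ whose nodes are finite strictly increasing $\mathcal F$-free sequences from $\kappa$. Well-foundedness of trees is absolute between $L$ and $V$, so $V\models\mathrm{Fr}_\omega(\kappa,\omega)$ forces $T_\mathcal F$ to be ill-founded inside $L$ as well. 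This yields $L\models\mathrm{Fr}_\omega(\kappa,\omega)$ restricted to $L$-families, which is enough to witness $(M)$ in $L$.

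The core of the argument is then to show that inside $L$, $\mathrm{Fr}_\omega(\kappa,\omega)$ already implies $\kappa\to(\omega)^{<\omega}_2$. Given any partition $c\colon[\kappa]^{<\omega}\to 2$ in $L$, I form a countable family $\mathcal F_c$ consisting of $\Sigma_n$-Skolem functions of the structure $(L_\kappa,\in,c)$ for all $n<\omega$, enumerated by Skolem terms and available through the fine structure of $L$. Applying $\mathrm{Fr}_\omega(\kappa,\omega)$ to $\mathcal F_c$ produces an infinite $\mathcal F_c$-free set $A$. The key claim, echoing Silver's construction of indiscernibles from $0^\sharp$, is that such $A$ of order type $\omega$ is automatically a set of indiscernibles for $(L_\kappa,\in,c)$, hence $c$-homogeneous on every $[A]^n$. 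This is the main obstacle: the family $\mathcal F_c$ must be chosen richly enough that $\mathcal F_c$-freeness propagates to full indiscernibility, which requires careful handling of Skolem hulls and the condensation lemma in $L$. For the detailed fine-structural execution of this last step I would defer to \cite{DevPar73}.
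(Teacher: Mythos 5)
Your overall architecture is the same as the paper's: prove outright that an $\omega$-Erd\H{o}s cardinal is $\omega$-MInA, and for the converse pass to $L$ by absoluteness of well-foundedness and then run a Skolem-hull/condensation/Silver argument to recover the partition property there. However, there is a genuine gap in your first direction. The $2$-colouring $c_{f,m,\sigma}$, which records only \emph{membership} of $\alpha_m$ in $f$ applied to the other coordinates, does not force the homogeneous colour to be $0$ when $m\leq n$. Your contradiction requires infinitely many candidates for the $m$-th coordinate while the arguments of $f$ stay fixed; but the partition property only hands you a homogeneous set $A$ of order type $\omega$, and for $m\leq n$ the $m$-th coordinate is confined to the gap $(\alpha_{m-1},\alpha_{m+1})$, which meets $A$ in a \emph{finite} set. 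Concretely, for $f(\beta)=A\cap\beta$ every infinite subset of $A$ is homogeneous with colour $1$, so homogeneity alone does not yield mutual independence. Repairing this by demanding infinite gaps would require $\kappa\rightarrow(\omega\cdot 2)^{<\omega}_2$, a strictly stronger hypothesis that ruins the equiconsistency. The paper avoids this by colouring with $\omega+1$ values recording the \emph{position} of the smallest element $\alpha$ inside $f(\beta_1,\dots,\beta_n)$: two distinct $\alpha_0<\alpha_1$ below the same $\vec\beta$ cannot occupy the same position, so the homogeneous value must be ``not a member''. This, however, only yields \emph{forwards} independence, and the upgrade to mutual independence is precisely the content of Lemma \ref{lem:FInA-MInA-equivalence} (the Koepke-style argument), which your proof has no substitute for. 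Alternatively one can extract genuine order-indiscernibles for $(\kappa,\in,f)_{f\in\mathcal F}$ and use a two-step substitution, but again mere homogeneity for your membership colouring is weaker than indiscernibility.

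On the converse direction, your absoluteness step via the tree of finite free sequences is correct and matches the paper's remark preceding the Fact. But note that an $\mathcal F_c$-free set is \emph{not} automatically a set of indiscernibles for $(L_\kappa,\in,c)$; in the paper the indiscernibles arise as the critical sequence of a nontrivial elementary self-embedding of the common transitive collapse $T$ of Skolem hulls of different infinite subsets of the independent set (using condensation and minimality of the collapse height), and only then does Silver's lemma apply. You flag this as the main obstacle and defer to \cite{DevPar73}, which is acceptable for a sketch, but as written the claim of ``automatic'' indiscernibility is false and the deferred step is where all the work lies.
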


On the other hand, MInA cardinal can be quite small in terms of the $\aleph$-hierarchy and continuum function.

\begin{fact}[Devlin \cite{Dev73}]
If $\mathrm{ZFC}+``\text{there is a strongly MInA cardinal}"$ is consistent then so is the theory $\mathrm{ZFC}+``2^{\omega}\text{ is strongly MInA}"$.
\end{fact}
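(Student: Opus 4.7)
The plan is to force $2^{\omega}$ up to $\kappa$ via standard Cohen forcing while using the ccc to preserve the strongly MInA property. Start from $V \models \mathrm{ZFC}+$``$\kappa$ is strongly MInA''. By the absoluteness argument already indicated in the paragraph preceding this Fact (ill-founded trees of losing hat colourings remain ill-founded in inner models), $\kappa$ remains strongly MInA in $L$, so I may replace $V$ by $L$ and assume that $\mathrm{GCH}$ holds in $V$. By the Baumgartner/Devlin–Paris equiconsistency just recalled, strongly MInA cardinals carry $\omega$-Erd\H{o}s strength and are in particular weakly inaccessible in $L$, so I may also assume that $\kappa$ is a limit cardinal. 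Now force with $\mathbb{P}=\mathrm{Add}(\omega,\kappa)$ (finite partial functions from $\omega\times\kappa$ to $2$), let $G$ be $\mathbb{P}$-generic, and use the ccc together with the standard nice-name count (via $\kappa^{\omega}=\kappa$, which holds from $\mathrm{GCH}$ since $\mathrm{cf}(\kappa)>\omega$) to conclude $V[G]\models 2^{\omega}=\kappa$. All cardinals and cofinalities are preserved, so in particular $\kappa$ remains a limit cardinal in $V[G]$.

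The core of the proof is that $\kappa$ is still strongly MInA in $V[G]$. Fix an infinite $\gamma<\kappa$ and a countable family $\mathcal F=\{f_n:n<\omega\}\in V[G]$ of functions $f_n\colon\kappa^{k_n}\to[\kappa]^{<\gamma}$, with names $\dot f_n\in V$. The ccc lets us capture each $\dot f_n$ by a ground-model function: for each $\vec\alpha\in\kappa^{k_n}$, pick in $V$ a maximal antichain $A_{n,\vec\alpha}\subseteq\mathbb P$ whose members decide $\dot f_n(\check{\vec\alpha})$; by the ccc, $|A_{n,\vec\alpha}|\leq\omega$. Define in $V$
\[
F_n(\vec\alpha)\;=\;\bigcup\bigl\{X\subseteq\kappa\ :\ \exists p\in A_{n,\vec\alpha}\text{ with }p\Vdash \dot f_n(\check{\vec\alpha})=\check X\bigr\}.
\]
Then $F_n\in V$ is a function $\kappa^{k_n}\to[\kappa]^{\leq\gamma}\subseteq[\kappa]^{<\gamma^+}$, and by genericity $V[G]\models f_n(\vec\alpha)\subseteq F_n(\vec\alpha)$ for every $\vec\alpha$. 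Since $\kappa$ is a limit cardinal and $\gamma<\kappa$, we have $\gamma^+<\kappa$, and so the strongly MInA property of $\kappa$ in $V$ supplies a countably infinite mutually $\{F_n\}$-independent set $A\in V$. The inclusion $f_n(\vec\alpha)\subseteq F_n(\vec\alpha)$ then immediately yields that $A$ is mutually $\mathcal F$-independent in $V[G]$ as well, so $\kappa$ has the $\gamma$-MInA in $V[G]$. Since $\gamma<\kappa$ was arbitrary, $\kappa$ is strongly MInA in $V[G]$, and together with $2^\omega=\kappa$ this completes the proof.

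The main obstacle in the argument is the small mismatch between $[\kappa]^{<\gamma}$ and $[\kappa]^{\leq\gamma}$: the ccc capturing functions $F_n$ can only be guaranteed to take values of size $\leq\gamma$ (a countable union of sets of size $<\gamma$ need not have size $<\gamma$ when $\gamma=\omega$ or when $\mathrm{cf}(\gamma)=\omega$), forcing one to invoke the stronger $\gamma^+$-MInA in $V$ rather than the $\gamma$-MInA directly. This is precisely why the preliminary reduction to an inner model where $\kappa$ is a limit cardinal is essential: a strongly MInA successor cardinal $\kappa=\mu^+$ would not automatically supply the $\mu^+$-MInA needed to absorb the capturing step for $\gamma=\mu$. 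Everything else in the argument is routine ccc bookkeeping.
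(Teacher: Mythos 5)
The paper does not actually prove this Fact---it is quoted from Devlin---so your forcing argument is being compared against the standard proof rather than one in the text. Your architecture (pass to $L$, force with $\mathrm{Add}(\omega,\kappa)$, cover each name $\dot f_n$ by a ground-model function $F_n$ whose values have size $\leq\gamma$, and absorb the loss using the $\gamma^+$-MInA, available because strongly MInA cardinals are limit cardinals) is the right one and is essentially how such preservation results are proved. But one step as written is incorrect: your construction of $F_n$ fails for uncountable $\gamma$. You take a maximal antichain of conditions $p$ with $p\Vdash\dot f_n(\check{\vec\alpha})=\check X$; however, for $\gamma>\omega$ the value $f_n(\vec\alpha)$ can be a \emph{new} set of ordinals of size $<\gamma$ (for instance a countable set coding a Cohen real), in which case no condition forces $\dot f_n(\check{\vec\alpha})$ to equal a ground-model set, the collection of ``deciding'' conditions is not dense, your antichain is not predense, and the appeal to genericity for $f_n(\vec\alpha)\subseteq F_n(\vec\alpha)$ collapses (for $\gamma=\omega$ the values are finite and hence in $V$, so there the argument is fine). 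The repair is the standard ccc covering lemma: fix a name $\dot g$ forced to enumerate $\dot f_n(\check{\vec\alpha})$ in order type $\leq\gamma$, let $B_{\xi}$ be the countable (by ccc) set of possible values of $\dot g(\check\xi)$, and set $F_n(\vec\alpha)=\bigcup_{\xi<\gamma}B_{\xi}$; this has size $\leq\gamma$ and is forced to cover $\dot f_n(\check{\vec\alpha})$, after which the rest of your argument goes through verbatim.

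A second point needs justification: you assert $\mathrm{cf}(\kappa)>\omega$ via ``strongly MInA cardinals are weakly inaccessible in $L$,'' but that is false in general---the paper itself records (Shelah, Koepke) that $\aleph_\omega$ can be strongly MInA, and in $L$ the strongly MInA cardinals are the $\omega$-partition cardinals together with their limits, which include singular cardinals of countable cofinality. This is not cosmetic: if $\mathrm{cf}(\kappa)=\omega$ then $2^\omega=\kappa$ is impossible by K\"onig's theorem. The fix is to choose $\kappa$ to be the \emph{least} strongly MInA cardinal of $L$, which by the paper's last section is the $\omega$-Erd\H{o}s cardinal of $L$ and hence inaccessible there; then $\kappa^\omega=\kappa$ under $\mathrm{GCH}$ and the nice-name count does give $2^\omega=\kappa$ in the extension. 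With these two repairs your proof is correct.
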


We remark once again that Devlin phrased his result in terms of the free subset property.

Since $\gamma$-MInA implies $\gamma^+$-MInA, strongly MInA cardinals are uncountable limit cardinals. So the least strongly MInA cardinal is at least $\aleph_\omega$. Shelah has proven that this cannot be improved.

\begin{fact}[Shelah \cite{She80}]
If $\mathrm{ZFC}$ is consistent with the existence of infinitely many measurable cardinals then so is ``$\aleph_\omega$ is strongly MInA".
\end{fact}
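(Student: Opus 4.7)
The plan is to use a Prikry-type forcing iteration---specifically a Magidor/Gitik-style interleaving of Prikry forcings with Lévy collapses---to turn an $\omega$-sequence of measurable cardinals into the $\aleph_n$'s, so that their supremum becomes $\aleph_\omega$ while remaining strongly MInA.

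First I would start in a model $V$ of $\mathrm{ZFC}$ containing measurable cardinals $\kappa_0 < \kappa_1 < \dots$ with normal ultrafilters $U_n$ on $\kappa_n$, and set $\kappa = \sup_n \kappa_n$. In $V$ the cardinal $\kappa$ is trivially strongly MInA: given any $\gamma < \kappa$, pick $N$ with $\gamma < \kappa_N$; for any countable family $\mathcal F$ of functions $f\colon \kappa^{n_f}\to [\kappa]^{<\gamma}$ one inductively constructs a mutually $\mathcal F$-independent set $\{\alpha_n\mid n\geq N\}$ with $\alpha_n\in\kappa_n$ by choosing each $\alpha_n$ outside the $U_n$-null set of ``forbidden'' values determined by the previously selected $\alpha_m$ ($m<n$) and by the countably many functions in $\mathcal F$. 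Normality of $U_n$ together with $\gamma < \kappa_n$ and the countability of $\mathcal F$ guarantees that this forbidden set is indeed $U_n$-small.

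Next I would define a forcing $\mathbb P$ collapsing the intervals $(\kappa_{n-1},\kappa_n)$ using $\mathrm{Coll}(\kappa_{n-1}^+,{<}\kappa_n)$ and weaving in Prikry-style components to arrange that in $V^{\mathbb P}$ each $\kappa_n$ becomes $\aleph_{n+1}$, so $\kappa$ becomes $\aleph_\omega$. The crucial design feature is that every $\mathbb P$-name for a function $f\colon \kappa^{n_f}\to [\kappa]^{<\gamma}$ admits a ``stem-factored'' decoding into a countable family of ground-model functions: once one fixes the finitely many Prikry stems involved, the remainder of $\mathbb P$ is sufficiently closed relative to the measures $U_n$ to compute $f(\vec\alpha)$ from ground model data.

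The main obstacle---and the heart of Shelah's argument---is the preservation step: showing that $\aleph_\omega$ is $\gamma$-MInA in $V^{\mathbb P}$ for every $\gamma<\aleph_\omega$. Given a countable family $\dot{\mathcal F}$ of names with outputs in $[\kappa]^{<\gamma}$ for some $\gamma<\kappa_N$, I would use the name decoding together with the surviving normality of $U_n$ (for $n\geq N$) to reduce the problem to a countable family of ground-model functions, apply the $V$-argument above to extract a mutually independent $\{\alpha_n\mid n\geq N\}\in V$, and then verify that this set remains mutually $\dot{\mathcal F}$-independent in $V^{\mathbb P}$. The delicate points are (i) ensuring that the Prikry/collapse components are arranged so the measures $U_n$ ``project'' well enough through the quotient forcings to make the reduction go through, and (ii) handling $\gamma$'s whose identity as a cardinal shifts between $V$ and $V^{\mathbb P}$, which is exactly what forces the interleaving with Lévy collapses rather than a naive product of Prikry forcings.
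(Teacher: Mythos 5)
The paper does not prove this statement; it is quoted as a black box from Shelah \cite{She80}, so the only thing to compare your sketch against is the standard argument, whose skeleton (collapse the measurables so that they become the $\aleph_n$'s, then preserve independence using the measures and a name-covering lemma) you have correctly identified. The genuine problem is the Prikry component. Since you start with infinitely many measurables $\kappa_0<\kappa_1<\cdots$, their supremum $\kappa$ already has cofinality $\omega$ and nothing needs to be singularized; worse, adjoining a Prikry sequence to $\kappa_n$ forces $\operatorname{cf}(\kappa_n)=\omega$ in the extension, which is incompatible with $\kappa_n$ becoming a successor cardinal $\aleph_{n+1}$ (successor cardinals are regular under choice). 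So the ``interleaving of Prikry forcings with L\'evy collapses'' would destroy exactly what you are trying to arrange. That machinery belongs to the strictly harder single-measurable version (Koepke's Fact \ref{fact:aleph_omega}), where one Prikry sequence through one measurable must be singularized into $\aleph_\omega$; for Shelah's theorem the forcing is just an appropriately supported product/iteration of L\'evy collapses, and consequently there are no ``stems'' to factor out --- the name-decoding lemma rests instead on the chain condition below $\kappa_n$ and the closure above $\kappa_{n-1}$, which let every name for a function into $[\kappa]^{<\gamma}$ be covered by a ground-model function into $[\kappa]^{<\kappa_n}$.

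There is also a gap in your ground-model step. Choosing $\alpha_n\in\kappa_n$ to avoid the $U_n$-small set of values generated by the \emph{previously selected} $\alpha_m$ ($m<n$) only rules out $\alpha_n\in f(\vec\beta)$ for arguments $\vec\beta$ drawn from the smaller points; mutual independence also demands $\alpha_m\notin f(\vec\beta)$ when $\vec\beta$ involves the later, larger points, and the set $\{\xi:\alpha_m\in f(\dots,\xi,\dots)\}$ need not be $U_n$-small, so it cannot simply be avoided at stage $n$ (consider $f$ constantly equal to $\{\alpha_0\}$). This one-sided-versus-mutual distinction is precisely what Lemma \ref{lem:FInA-MInA-equivalence} of the paper is devoted to, and it is not free: one must first homogenize each $f$ on measure-one sets (Rowbottom's theorem), or simply observe that each $\kappa_n$ is itself $\gamma$-MInA for $\gamma<\kappa_n$ and that a mutually independent subset of $\kappa_n$ for the truncated functions $\vec\beta\mapsto f(\vec\beta)\cap\kappa_n$ is already mutually $\mathcal F$-independent. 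With those two repairs --- drop the Prikry components and fix the independence recursion --- your outline becomes the standard proof.
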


Koepke has refined this to an equiconsistency.

\begin{fact}\label{fact:aleph_omega}[Koepke \cite{Koe84}]
The theories 
\begin{enumerate}
    \item $\mathrm{ZFC}+``\aleph_\omega \text{ is a strongly MInA cardinal}"$,
    \item $\mathrm{ZFC}+``\text{there is a measurable cardinal}"$
\end{enumerate}
are equiconsistent.
\end{fact}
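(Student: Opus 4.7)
The plan is to establish the equiconsistency in two directions, since each is proved by entirely different machinery. For the upper bound, I would start from a model $V\models\text{ZFC}+$``$\kappa$ is measurable with normal measure $U$'' and force to produce a model where $\aleph_\omega$ is strongly MInA. For the lower bound, I would assume $\aleph_\omega$ is strongly MInA in $V$ and, by contradiction with inner model theory, extract an inner model with a measurable cardinal, yielding the desired consistency.

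For the upper bound, the natural strategy is to singularise $\kappa$ to cofinality $\omega$ via Prikry forcing $\mathbb{P}_U$ with respect to $U$, combined with a Levy collapse that arranges the cardinals below $\kappa$ so that $\kappa$ becomes $\aleph_\omega$ in the extension. Concretely, one precedes the Prikry forcing with a collapse making $\aleph_1^V\cdots\aleph_n^V$ correspond to the right cardinals between consecutive members of the eventual Prikry sequence. The key preservation claim is that in $V[G]$, the cardinal $\kappa=\aleph_\omega^{V[G]}$ retains the $\gamma$-MInA for every $\gamma<\kappa$. Here one uses the Prikry property: any function $f\colon\kappa^{n}\to[\kappa]^{<\gamma}$ in the extension is approximated in a controlled way by functions in $V$, and the normality of $U$ lets one find $U$-measure-one sets that are ``almost free''. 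Intersecting over a countable family and then picking points along the Prikry sequence should yield a countably infinite mutually independent set, using that $\gamma<\kappa$ and $U$ is $\kappa$-complete.

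For the lower bound, I would proceed by contradiction: assume $\aleph_\omega$ is strongly MInA and that no inner model contains a measurable cardinal. Under the latter hypothesis, the Dodd--Jensen covering lemma for the core model $K$ applies: every uncountable set of ordinals is covered by a set of $K$ of the same cardinality. I would fix a $\Sigma_1$ Skolem function $h$ for $K$ and build a countable family $\mathcal{F}$ of functions $f\colon\aleph_\omega^{n_f}\to[\aleph_\omega]^{<\gamma}$ encoding Skolem hulls in $K$ relative to parameters in $\aleph_\omega$, for some appropriate $\gamma<\aleph_\omega$. A countably infinite mutually $\mathcal{F}$-independent set $A$ then forms, in effect, a set of Silver-style indiscernibles for $K$: the covering lemma forces the Skolem hull of $A$ to sit inside a set of $K$ of the right size, and the independence of $A$ prevents it from being ``definably reached'' from the rest of $A$. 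Iterating this, one extracts an $\omega_1$-sequence of indiscernibles, contradicting the absence of $0^\dagger$ (and hence the absence of an inner model with a measurable).

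The main obstacle, by a wide margin, is the lower bound. The forward direction is a guided Prikry construction whose preservation lemma is delicate but standard in spirit. The reverse direction, however, requires committing to a particular core model $K$ below a measurable and invoking covering; one must verify that covering really does reduce the abstract combinatorial statement ``$\aleph_\omega$ has the $\gamma$-MInA for every $\gamma<\aleph_\omega$'' to a statement about indiscernibles sharp enough to contradict $\neg 0^\dagger$. A secondary, more routine, obstacle in the upper bound is the bookkeeping needed to make $\kappa$ land on $\aleph_\omega$ (rather than $\aleph_{\omega+1}$ or a more exotic ordinal) while not destroying the combinatorial residue of measurability that drives the MInA in the extension.
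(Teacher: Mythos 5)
First, a point of calibration: the paper does not prove this statement at all --- it is quoted as a black-box result of Koepke \cite{Koe84} (the paper only sketches, separately, why strong MInA-ness is downward absolute to inner models, which is used for the Baumgartner/Devlin--Paris equiconsistency, not for this one). So your proposal can only be measured against the known proof in the literature, and at that level your architecture is the standard one: a Prikry-style singularisation with interleaved collapses for the upper bound (Koepke himself works in the $\omega$-th iterated ultrapower $M_\omega$ and collapses between the critical points $\kappa_n$, using that the critical sequence is free over $M_\omega$ and that the collapses are homogeneous enough to preserve freeness --- essentially the same argument this paper uses to show measurables are strongly MInA), and the Dodd--Jensen core model $K$ with the covering lemma for the lower bound.

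There is, however, a genuine gap in your lower bound as stated: you aim to ``extract an $\omega_1$-sequence of indiscernibles, contradicting the absence of $0^\dagger$.'' That target is unreachable. The hypothesis ``$\aleph_\omega$ is strongly MInA'' is, by the very theorem you are proving, equiconsistent with a single measurable cardinal, which is strictly weaker than $0^\dagger$; no argument can squeeze $0^\dagger$ out of it. The correct endpoint of the Dodd--Jensen argument is weaker and different in kind: assuming there is no inner model with a measurable, $K=K^{\mathrm{DJ}}$ exists, covering holds over $K$, and one uses a free subset for the $K$-Skolem functions (together with the fact that $\aleph_\omega$ is singular, so covering bites) to produce either a nontrivial elementary embedding $K\to K$ or a countably complete ultrafilter on some $\mathcal P(\lambda)\cap K$; by the Dodd--Jensen theorems either of these already yields an inner model with a measurable cardinal, contradicting the assumption. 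In particular the indiscernibles you get are for $K$ and only countably many of them are needed --- there is no passage to $\omega_1$-many, and no sharp is produced. Your upper bound sketch is sound in spirit, with the one caveat that for \emph{strong} MInA-ness you need freeness for functions into $[\kappa]^{<\gamma}$ for every $\gamma<\aleph_\omega$, so the homogeneity argument must be run with Rowbottom's theorem for partitions into fewer than $\kappa$ pieces, not just countably many.
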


In this sense, strongly MInA cardinals can be quite small: they neither need to be regular nor strong limit cardinals. Neither does their existence imply the existence of inaccessible cardinals. Nonetheless, the least strongly MInA cardinal in $L$ and other canonical inner models is inaccessible and much more, which we will prove soon.

We further remark that measurable cardinals and even Ramsey cardinals are all strongly MInA. They are even limits of strongly MInA cardinals, so they do not function as losing thresholds.

We should also note that some instance of the axiom of choice is necessary to prove that strongly MInA cardinals are limit cardinals. Observe that in the proof of Proposition \ref{prop:FInA-step-up}, we used implicitly that there is a sequence $\langle f_\xi\mid 0<\xi<\gamma^+\rangle$ of surjections $f_\xi\colon\gamma\rightarrow\xi$ which cannot be proven to exist in $\mathrm{ZF}$. It is entirely possible $\omega_1$ is strongly MInA if the axiom of choice fails.

\begin{proposition}[$\mathrm{ZF}$]
If $\kappa$ is measurable then $\kappa$ is strongly MInA.
\end{proposition}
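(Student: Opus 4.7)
The plan is to leverage the measurable cardinal's ultrafilter via the standard Ramsey-theoretic machinery, carefully adapted to ZF.

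First I fix $\gamma<\kappa$ and a countable family $\mathcal{F}$ of functions $f\colon\kappa^{n_f}\to[\kappa]^{<\gamma}$, and I enlarge $\mathcal{F}$, still countably, to be closed under permutations and identifications of arguments (replacing $f$ by $(\alpha_1,\ldots,\alpha_k)\mapsto f(\alpha_{\pi(1)},\ldots,\alpha_{\pi(n_f)})$ for arbitrary $\pi\colon\{1,\ldots,n_f\}\to\{1,\ldots,k\}$). With this closure, it suffices to produce a countably infinite $A\subseteq\kappa$ such that for every strictly increasing tuple $\alpha_0<\cdots<\alpha_m$ from $A$, every $f\in\mathcal{F}$ of arity $m$, and every $i\le m$, one has $\alpha_i\notin f(\alpha_0,\ldots,\widehat{\alpha_i},\ldots,\alpha_m)$. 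From the given $\kappa$-complete non-principal ultrafilter I would first produce a $\kappa$-complete normal ultrafilter $U$ on $\kappa$ by the usual Rudin--Keisler reduction on the ultrapower; this goes through in ZF since $\kappa$-completeness suffices for {\L}o\'s' theorem.

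For each $f\in\mathcal{F}$ and $i\le n_f$ define the $2$-colouring $c_{f,i}\colon[\kappa]^{n_f+1}\to 2$ by $c_{f,i}(\alpha_0<\cdots<\alpha_{n_f})=1$ iff $\alpha_i\in f(\alpha_0,\ldots,\widehat{\alpha_i},\ldots,\alpha_{n_f})$. The classical inductive proof of $\kappa\to(\kappa)^n_2$ from a normal measure (whose inductive step rests on a diagonal intersection of the sets supplied by the inductive hypothesis) produces, for each $c_{f,i}$, a homogeneous set $H_{f,i}\in U$. Since only countably many pairs $(f,i)$ appear and $U$ is $\aleph_1$-complete, $H:=\bigcap_{f,i}H_{f,i}$ lies in $U$, so $\vert H\vert=\kappa$.

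The remaining task is to show each $c_{f,i}$ is constantly $0$ on $H$. If some $c_{f,i}$ were constantly $1$, set $n:=n_f$: since $H$ has order type $\kappa>\gamma$, I can pick elements $\beta_0<\cdots<\beta_{n-1}$ of $H$ to play the role of the $n$ arguments of $f$ in the positions other than $i$, in such a way that, with the conventions $\beta_{-1}:=0$ and $\beta_n:=\kappa$, the open interval $(\beta_{i-1},\beta_i)$ contains at least $\gamma$ many elements of $H$ (e.g.~take the $\beta$'s below position $i$ as the first few elements of $H$ and the $\beta$'s above position $i$ past the $\gamma$-th element). Every such $\alpha_i\in H\cap(\beta_{i-1},\beta_i)$ merges with the $\beta_j$'s into an increasing $(n+1)$-tuple witnessing $c_{f,i}=1$, forcing $\alpha_i\in f(\beta_0,\ldots,\beta_{n-1})$; but the latter set has size ${<}\gamma$, contradicting the pigeonhole bound. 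Hence $c_{f,i}\equiv 0$ on $H$, and any countably infinite $A\subseteq H$ is mutually $\mathcal{F}$-independent, since the closure of $\mathcal{F}$ reduces arbitrary tuples from $A\setminus\{\alpha\}$ to increasing tuples of distinct elements.

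The main obstacle is verifying that the normalisation of the ultrafilter and the partition calculus both proceed without appeal to choice. Both are part of the folklore on ZF large cardinals: the ultrapower of $V$ by a $\kappa$-complete ultrafilter satisfies {\L}o\'s' theorem, and from there the reduction to a normal measure and the Erd\H{o}s--Rado-type partition theorem $\kappa\to(\kappa)^n_2$ proceed verbatim, with the countable-intersection step powered by $\aleph_1$-completeness of $U$.
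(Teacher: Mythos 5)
Your combinatorial core is correct under $\mathrm{AC}$ and is a genuinely different route from the paper's: you run the classical Rowbottom-style argument (normal measure, homogeneous sets in $U$ for the colourings $c_{f,i}$, and a cardinality argument using $\vert f(\vec\beta)\vert<\gamma$ to rule out the colour $1$), whereas the paper iterates the ultrapower $\omega$ times, reads a forwards independent set off the critical sequence, and then invokes the FInA--MInA equivalence. The reduction to increasing tuples via closure under permutations and identifications is handled correctly, and the interval/pigeonhole step is sound since $\vert H\vert=\kappa>\gamma$.

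The gap is concentrated in your final paragraph, which is exactly where the proposition's $\mathrm{ZF}$ annotation has to be earned. The claim that $\kappa$-completeness suffices for {\L}o\'s' theorem is false: the existential-quantifier step for an ultrapower of $V$ requires selecting a witness $y_\alpha$ for each $\alpha<\kappa$, and completeness does not supply those choices --- the paper itself points out that its choiceful argument uses $\mathrm{AC}$ ``most notably in the form of {\L}o\'s's theorem.'' Worse, your very first step is unavailable: it is consistent with $\mathrm{ZF}$ that a measurable cardinal carries no normal measure at all (Bilinsky--Gitik), so the ``usual Rudin--Keisler reduction'' cannot be performed. Finally, the inductive proof of $\kappa\to(\kappa)^{n}_2$ does not proceed verbatim: at the successor step one must pick one homogeneous set $H_\alpha\in U$ for each $\alpha<\kappa$ before forming the diagonal intersection, which is $\kappa$ many choices (this particular point is repairable by making the homogeneous sets canonical in $c$ and $U$, but you would need to say so, and it presupposes the normal measure you cannot obtain). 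The standard repair --- and the one the paper uses --- is to observe that the countable family $\mathcal F$ can be coded by a set $A$ of ordinals, pass to $L[A,U]$, which is a model of $\mathrm{ZFC}$ in which $\kappa$ remains measurable, run your (or any choiceful) argument there, and note that an infinite mutually $\mathcal F$-independent set is upward absolute to $V$. With that substitution for your last paragraph, your proof closes.
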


\begin{proof}
There are many ways to do this; here is one which fits into what we have done so far. First assume that choice holds. Let $U$ be a measure on $\kappa$ and let 
$$\langle M_n, j_{n, m}\mid n\leq m\leq \omega\rangle$$
be the iterated ultrapower of $M_0=V$ by $U$ of length $\omega$. The critical sequence $\langle j_{0, n}(\kappa)\mid n<\omega\rangle$ is forwards $j_{0,\omega}(\mathcal F)$-independent for any countable (or of size ${<}\kappa$) collection of functions $f\colon\kappa^{n_f}\rightarrow [\kappa]^{<\gamma}$. The reason is that for $n<\omega$, every $j_{0, m}(\kappa)$ with $m>n$ as well as $j_{0,\omega}(f)$ for $f\in \mathcal F$ is in the range of $j_{n, \omega}$, but $j_{0, n}(\kappa)$ is missing. The critical sequence is not an element of $M_\omega$, but we can use absoluteness of well-foundedness to show that there must be an infinite forward $\mathcal F$-independent set in $M_\omega$ nonetheless. So $j_{0,\omega}(\kappa)$ is $\gamma$-FInA in $M_\omega$, so $\kappa$ is $\gamma$-FInA in $V$ by elementarity of $j_{0,\omega}$. As $\gamma$-MInA is the same as $\gamma$-FInA, $\kappa$ is $\gamma$-MInA. We used the axiom of choice here in a few places, most notably in the form of \L os's theorem. \\
Now drop the assumption of the axiom of choice. Suppose $\gamma<\kappa$ and $\mathcal F$ is a countable collection of functions on $f\colon\kappa^{n_f}\rightarrow[\kappa]^{<\gamma}$. We can code $\mathcal F$ as a subset $A$ of $\kappa$, so that $\mathcal F\in L[A, U]$ is a model of $\mathrm{ZFC}$ in which $\kappa$ is measurable. Hence there is an infinite mutually $\mathcal F$-independent set in $L[A, U]$.
\end{proof}

It is well-known that $\omega_1$ can consistently with $\mathrm{ZF}$ be measurable, at least assuming that $\mathrm{ZFC}$+``there is a measurable cardinal" is consistent. In fact, this happens naturally if the axiom of determinacy holds. It follows that $\omega_1$ is consistently strongly MInA. Also note that our results on the equivalence between strongly losing and strongly MInA cardinals do not depend on the axiom of choice.

We challenge the reader who is sceptical of large cardinals to prove within $\mathrm{ZFC}$ that all $(\omega,\kappa,\omega)$-hat games are winning. 

\subsection{Strongly MInA cardinals in $L$}

 In $L$, one can compute exactly what the strongly MInA cardinals are in terms of the \textit{partition calculus}. This is a subject of Set Theory about infinite combinatorics; roughly it is about finding in a partition of a set an infinite subset of one of the parts with a nice structure. 

\begin{definition}
For cardinals $\kappa, \gamma$, the partition relation $\kappa\rightarrow (\omega)^{<\omega}_\gamma$ holds if for any function 
$$f\colon[\kappa]^{<\omega}\rightarrow \gamma$$
there is a countably infinite set $H\subseteq\kappa$ which is \textit{homogeneous} for $f$, i.e.~$f(s)=f(t)$ for all $s, t\in [H]^{<\omega}$ of the same size. We write $\kappa\rightarrow (\omega)^{<\omega}_{<\kappa}$ as shorthand for 
$$\forall\gamma<\kappa\ \kappa\rightarrow(\omega)^{<\omega}_\gamma.$$
\end{definition}
The following is similar to \cite[Theorem 2]{DevPar73} which is also attributed to Baumgartner there.
\begin{lemma}
In G\"odel's constructible universe $L$, a cardinal $\kappa$ is strongly MInA iff $\kappa\rightarrow(\omega)^{<\omega}_{<\kappa}$.
\end{lemma}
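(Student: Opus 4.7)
The proof splits into two directions, one in $\mathrm{ZFC}$ and one specific to $L$. For the $(\Leftarrow)$-direction I would work in $\mathrm{ZFC}$. Fix $\gamma<\kappa$ and a countable family $\mathcal F=\{f_n\mid n<\omega\}$ with $f_n\colon\kappa^{k_n}\to[\kappa]^{<\gamma}$, and for each combinatorial datum $(n,m,\pi,j)$ -- where $m\geq k_n+1$, $\pi\colon k_n\to m$ is a strictly increasing injection, and $j\in m\setminus\mathrm{ran}(\pi)$ -- define a sub-colouring
\[G^{n,m,\pi,j}(\{\alpha_0<\cdots<\alpha_{m-1}\})=\begin{cases}r+1&\text{if }\alpha_j\text{ has rank }r\text{ in }f_n(\alpha_{\pi(0)},\dots,\alpha_{\pi(k_n-1)}),\\ 0&\text{otherwise.}\end{cases}\]
Since $\kappa\to(\omega)^{<\omega}_2$ forces $\kappa$ to be strongly inaccessible, $(\gamma+1)^\omega<\kappa$, so I can fuse these countably many sub-colourings into a single $G\colon[\kappa]^{<\omega}\to(\gamma+1)^\omega$ and apply $\kappa\to(\omega)^{<\omega}_{<\kappa}$ to obtain an infinite $G$-homogeneous $A\subseteq\kappa$. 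A pigeonhole argument then shows that each sub-colouring must be constantly $0$ on $A$: if $G^{n,m,\pi,j}$ had homogeneous value $r+1$, one could fix every position of an $m$-tuple from $A$ except position $j$ with enough slack that $A$ provides at least two distinct valid candidates at position $j$, each of which would have to equal the unique rank-$r$ element of the same fixed set $f_n(\alpha_{\pi(0)},\dots,\alpha_{\pi(k_n-1)})$, a contradiction. Hence $A$ is mutually $\mathcal F$-independent, so $\kappa$ is strongly MInA.

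For the $(\Rightarrow)$-direction I would work inside $L$ and assume $\kappa$ is strongly MInA there. Specialising to $\gamma=\omega$ gives $\omega$-MInA at $\kappa$, and the cited theorem of Baumgartner and Devlin--Paris then promotes this to $\kappa\to(\omega)^{<\omega}_2$, i.e.~$\kappa$ is $\omega$-Erd\H{o}s in $L$. In particular $\kappa$ is weakly compact, hence strongly inaccessible. Now the standard partition-calculus upgrade yields $\kappa\to(\omega)^{<\omega}_\gamma$ for every $\gamma<\kappa$: first iterate to obtain $\kappa\to(\alpha)^{<\omega}_2$ for every $\alpha<\omega_1$, iterate further (using strong inaccessibility) to $\kappa\to(\omega\cdot\gamma)^{<\omega}_2$, then slice an $(\omega\cdot\gamma)$-homogeneous sequence into $\gamma$ consecutive $\omega$-blocks and code the original $\gamma$-colouring down to two colours using $\gamma^\omega<\kappa$. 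Since $\gamma<\kappa$ was arbitrary, $\kappa\to(\omega)^{<\omega}_{<\kappa}$.

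The main obstacle is the Baumgartner--Devlin--Paris input itself, which is where the hypothesis ``in $L$'' really matters. That step rests on the condensation lemma of $L$: the Mostowski collapse of a Skolem hull inside $L_{\kappa^+}$ is determined by the order-type of its ordinal generators, so any set $A\subseteq\kappa$ that is mutually independent for the canonical Skolem functions of $(L_{\kappa^+},\in,<_L)$ is automatically a set of indiscernibles for this structure, which immediately gives $\kappa\to(\omega)^{<\omega}_2$ for all definable colourings and thence for all colourings living in $L_{\kappa^+}$. Everything else is either combinatorial bookkeeping (on the $(\Leftarrow)$ side) or classical partition calculus (on the $(\Rightarrow)$ side).
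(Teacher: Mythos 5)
Your $(\Leftarrow)$ direction is essentially workable, though your justification for $(\gamma+1)^\omega<\kappa$ is wrong as stated: $\kappa\rightarrow(\omega)^{<\omega}_2$ does \emph{not} force strong inaccessibility (every cardinal above the $\omega$-Erd\H{o}s cardinal satisfies it, including singular cardinals that are not strong limits). What you need, and what is true, is that $\kappa\rightarrow(\omega)^{<\omega}_{<\kappa}$ makes $\kappa$ a strong limit (via $2^\gamma\not\rightarrow(3)^2_\gamma$). The paper sidesteps the fusion into $(\gamma+1)^\omega$ colours altogether: it colours $\{\alpha<\beta_1<\dots<\beta_n\}$ only by the rank of the \emph{least} element in $f_n(\beta_1,\dots,\beta_n)$, gets a \emph{forwards} independent set from a single colouring into $\gamma+1$ values, and then upgrades to mutual independence by the equivalence of the $\gamma$-FInA and $\gamma$-MInA (Lemma \ref{lem:FInA-MInA-equivalence}). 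Your direct route to mutual independence via the positions $j$ works, but it buys nothing given that that lemma is already available.

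The $(\Rightarrow)$ direction has genuine gaps. First, the Baumgartner/Devlin--Paris result is an \emph{equiconsistency} of theories, not the implication ``every $\omega$-MInA cardinal of $L$ satisfies $\kappa\rightarrow(\omega)^{<\omega}_2$''; that implication is essentially the content of the lemma being proved, so it cannot be imported. Your sketch of why it should hold is also not right: a mutually independent set for the Skolem functions of $(L_{\kappa^+},\in)$ is \emph{not} automatically a set of indiscernibles (freeness and indiscernibility are different properties); Silver's condensation argument instead extracts the indiscernibles as the critical sequence $\mu_n=\pi^n(\mu_0)$ of a nontrivial elementary self-embedding of the common transitive collapse of the hulls of infinite subsets of the free set. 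Second, and fatally, the ``standard partition-calculus upgrade'' does not exist: $\kappa\rightarrow(\omega)^{<\omega}_2$ does not imply $\kappa\rightarrow(\omega+1)^{<\omega}_2$, let alone $\kappa\rightarrow(\omega\cdot\gamma)^{<\omega}_2$ --- lengthening the homogeneous set strictly increases the least cardinal with the property --- and by Drake's fine hierarchy the number of colours matters as well, so for a non-minimal strongly MInA $\kappa$ of $L$ your argument cannot reach $\kappa\rightarrow(\omega)^{<\omega}_{<\kappa}$. By specialising to $\gamma=\omega$ at the outset you discard exactly the hypothesis that carries the load: the paper, for each $\gamma<\kappa$ separately, applies $\gamma^+$-MInA to the functions $(\alpha_1,\dots,\alpha_n)\mapsto\{f(\delta,\alpha_1,\dots,\alpha_n)\mid\delta\le\gamma\}$ with $f$ definable over $L_{\kappa^+}$, so that the resulting critical sequence is indiscernible over $(T,\in,(\alpha)_{\alpha\le\gamma})$ and therefore homogeneous for the definable-from-$\gamma$ counterexample that $L$ would have to provide if $\kappa\rightarrow(\omega)^{<\omega}_\gamma$ failed.
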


\begin{proof}
If $\kappa\rightarrow (\omega)^{<\omega}_{<\kappa}$ holds then we can deduce $\kappa$ to be strongly MInA without the restriction to $L$: Suppose $\gamma<\kappa$ and $\mathcal F=\{f_n\mid n<\omega\}$ is a countable collection of functions $f\colon \kappa^{m_n}\rightarrow \kappa^{<\gamma}$. By Lemma \ref{lem:FInA-MInA-equivalence}, it suffices to show that there is an infinite forwards $\mathcal F$-independent set. We may assume that $m_n=n$ for all $n$. Consider the partition
$$F\colon [\kappa]^{<\omega}\rightarrow \gamma+1$$
given by 
$$F(\{\alpha<\beta_1<\dots< \beta_n\})=\begin{cases}
\delta & \text{if }\alpha \text{ is the }\delta\text{-th element of }f_n(\beta_1,\dots\beta_n)\\
\gamma & \text{if }\alpha\notin f_n(\beta_1,\dots,\beta_n).
\end{cases}
$$
By assumption, there is a countably infinite set $H$ homogeneous for $F$. If $\alpha_0<\alpha_1<\beta_1<\dots<\beta_n$ are all in $H$ then 
$$\delta:=F(\{\alpha_0,\beta_1\dots,\beta_n\})=F(\{\alpha_1,\beta_1,\dots,\beta_n\})$$
and since $\alpha_0,\alpha_1$ cannot both occupy ``the same spot" in $f_n(\beta_1,\dots,\beta_n)$, we must have $\delta=\gamma$. It follows that $F$ takes only the value $\gamma$ on $[H]^{<\omega}$ and hence $H$ is forwards $\mathcal F$-independent.\\

For the other direction, suppose that $\kappa$ is strongly MInA and $\gamma<\kappa$. Consider the structure $(L_{\kappa^+}, \in)$, where $L_{\kappa^+}$ is the $H_{\kappa^+}$ as computed in $L$. Let $\mathcal F$ consist of all functions of the form
$$F\colon \kappa^n\rightarrow [\kappa]^\gamma,\  F(\alpha_1,\dots,\alpha_n)=\{f(\delta,\alpha_1,\dots,\alpha_n)\mid \delta\leq\gamma\}$$
where $f\colon\kappa^{n+1}\rightarrow \kappa$ is definable over $(L_{\kappa^+},\in)$, so $\mathcal F$ is countable. As strongly MInA cardinals are limit cardinals, $\kappa$ is $\gamma^+$-MInA, so there is an infinite mututally $\mathcal F$-independent set $X$.  For any $Y\subseteq X$ let $$S_Y\prec L_{\kappa^+}$$
be the elementary substructure generated by $Y\cup(\gamma+1)$, i.e.
$$S_Y=\{x\in L_{\kappa^+}\mid x\text{ is definable over }L_{\kappa^+}\text{ from parameters in }Y\cup(\gamma+1)\}.$$
By the Mostowski collapse theorem, for each $Y$ there is a transitive set $T_Y$ and an isomorphism $\pi_{Y}\colon T_Y\rightarrow S_Y$. A remarkable fact known as \textit{condensation}, which ties in to the minimality of $L$ among inner models, is that the structures $T_Y$ depend only on their ordinal height, i.e.~the least ordinal $\alpha_Y$ so that $\alpha_Y\notin T_Y$. As the ordinals are well-founded, the set
$$\{\alpha^Y\mid Y\text{ is an infinite subset of }X\}$$
contains a minimal element $\alpha^Y$. It follows that if $Z$ is any infinite subset of $Y$ then $T_Y=T_Z$, call this set $T$. Setting $\pi_{Z, Y}=\pi_Y^{-1}\circ\pi_Z$, we get an elementary embedding
$$\pi_{Z, Y}\colon  T\rightarrow T.$$
If $Z\subsetneq Y$ then $\pi_{Z, Y}$ cannot be the identity as any element of $Y\setminus Z$ is missing from the range of $\pi_Z$. This means that there must be a least ordinal $\mu\in T$ so that $\pi_{Z, Y}(\mu)\neq\mu$. Let $\pi=\pi_{Y, Z}$, $\tau=\pi_Y^{-1}$ and $\mu_0$ this least ordinal which is moved by $\pi$, note that $\mu_0>\gamma$. For $n<\omega$, set $\mu_n=\pi^n(\mu_0)$. An old argument of Silver \cite{Sil70} shows that in this scenario, $\{\mu_n\mid n<\omega\}$ is \textit{order indiscernible} over $(T,\in, (\alpha)_{\alpha\leq\gamma})$, meaning that 
\begin{equation*}\tag{I}\label{eq:order-indiscernible}
\varphi(\alpha, \mu_{i_1},\dots,\mu_{i_n})\longleftrightarrow\varphi(\alpha,\mu_{j_1},\dots,\mu_{j_n})
\end{equation*}
holds true in $(T,\in)$ for any $i_1<\dots< i_n<\omega$, $j_1<\dots<j_n<\omega$, $\alpha\leq\gamma$ and any first order $\in$-formula $\varphi$. If $\kappa\rightarrow(\omega)^{<\omega}_{\gamma}$ would not hold in $L$ then, due to the nature of $L$, there would be a counterexample $G\colon\kappa^{<\omega}\rightarrow\gamma$ definable over $L_{\kappa^+}$ from the parameter $\gamma$. It follows from (\ref{eq:order-indiscernible}) and the elementarity of $\pi_Y\colon T\rightarrow L_{\kappa^+}$ that $\{\pi_Y(\mu_n)\mid n<\omega\}$ is homogeneous for $G$, so there is no such \nolinebreak $G$.
\end{proof}

A cardinal $\kappa$ is a \textit{$\omega$-partition cardinal} iff there is some $\gamma$ so that $\kappa$ is least with $\kappa\rightarrow(\omega)^{<\omega}_\gamma$. These cardinals were introduced and studied by Drake \cite{Dra74}. The better known $\omega$-Erd\H{o}s cardinal is simply the least $\omega$-partition cardinal, if it exists. All $\omega$-partition cardinals are inaccessible and much more. Cardinals $\kappa$ with $\kappa\rightarrow(\omega)^{<\omega}_{<\kappa}$ are exactly the $\omega$-partition cardinals and limits of $\omega$-partition cardinals. It follows that, in $L$, the $\omega$-partition cardinals are exactly those which arise as losing thresholds for the $(\lambda,\underline{\hspace{5pt}}, \gamma)$-hat games for infinite $\lambda,\gamma$. So in $L$, when playing the $(\omega,\kappa,\gamma)$-game for infinite $\gamma$, the set of colours has to grow much much larger than $\gamma$ before the logicians start losing.

\end{document}